 \newtheorem{theorem}{Theorem}[section]
 \newtheorem{Def}[theorem]{Definition}
 \newtheorem{Prop}[theorem]{Proposition}
 \newtheorem{Lem}[theorem]{Lemma}
 \newtheorem{Cor}[theorem]{Corollary}
 \newtheorem{Rem}[theorem]{Remark}
 \newtheorem{Exa}[theorem]{Example}
\def\ba{{\bf a}}
\def\bb{{\bf b}}
\def\bc{{\bf c}}
\def\b1{{\bf 1}}
\def\bt{{\bf t}}
\def\bu{{\bf u}}
\newcommand{\D}{{\mathcal D}}
\newcommand{\E}{{\mathcal E}}
\newcommand{\F}{{\mathcal F}}
\date {}
\begin{document}

\title{Self-similar sets, simple augmented trees, and their Lipschitz equivalence}

\author{Jun Jason Luo}
\address{College of Mathematics and Statistics, Chongqing University,  401331 Chongqing, China
\newline\indent Institut f\"ur Mathematik, Friedrich-Schiller-Universit\"at Jena, 07743 Jena, Germany}
\email{jasonluojun@gmail.com; jun.luo@uni-jena.de}

\subjclass[2010]{Primary 28A80; Secondary 05C05, 20F65}

\begin{abstract}
Given an iterated function system (IFS) of contractive similitudes, the theory of Gromov hyperbolic graph on the IFS has been established recently. In the paper, we introduce a notion of simple augmented tree which is a Gromov hyperbolic graph. By generalizing a combinatorial device of rearrangeable matrix, we show that there exists a near-isometry between the simple augmented tree and the symbolic space of the IFS, so that their hyperbolic boundaries are Lipschitz equivalent. We then apply this to consider the Lipschitz equivalence of self-similar sets with or without assuming the open set condition. Moreover, we also provide a criterion for a self-similar set to be a Cantor-type set which completely answers an open question raised in \cite{LaLu13}.  Our study extends the previous works.
\end{abstract}

\footnote{The research is supported in part by the NNSF of China (No.11301322), the Fundamental and Frontier Research Project of Chongqing (No.cstc2015jcyjA00035)}

\keywords{self-similar set, simple augmented tree, quotient space, hyperbolic boundary, rearrangeable matrix, open set condition, weak separation condition}

\maketitle

\section{Introduction}
A self-similar set $K$ is defined as the limit set of an  iterated function system (IFS) of contractive similitudes, and the iteration is addressed by a  symbolic space of finite words which forms a tree. The topological boundary of the tree is a Cantor-type set. However the tree does not capture all the geometric and analytic properties of $K$. By incorporating more information of $K$ onto the tree,  Kaimanovich \cite{Ka03} first explored the concept of ``augmented tree'' on the Sierpinski gasket such that it is a hyperbolic graph in the sense of Gromov (\cite{Gr87}\cite{Wo00}) and its hyperbolic boundary is homeomorphic to the gasket. Lau and Wang (\cite{LaWa09}\cite{Wa14}) extended this idea on more general self-similar sets satisfying the open set condition (OSC) or the weak separation condition. Recently they \cite{LaWa16} completed the previous studies by removing some superfluous conditions, and obtained that for any IFS, the augmented tree is always a hyperbolic graph. Moreover, the hyperbolic boundary is  H\"older equivalent to the $K$.  The setup of augmented trees  connects fractal geometry, graph theory and Markov chains, it  has been frequently used to study the random walks on self-similar sets and the induced Dirichlet forms  (\cite{Ki10}\cite{JuLaWa12}\cite{LaWa15}\cite{KoLaWo}\cite{KoLa}). On the other hand, the author and his coworkers made a first attempt to apply augmented trees to the study on Lipschitz equivalence of self-similar sets.  In a series of papers (\cite{LaLu13}\cite{DLL15}\cite{L13}),  we investigated in detail  the Lipschitz equivalence of totally disconnected self-similar sets with equal contraction ratio and their hyperbolic boundaries.

Recall that two metric spaces $(X, d_1)$ and $(Y, d_2)$ are said to be {\it Lipschitz equivalent}, write $X\simeq Y$, if there exists a bi-Lipschitz map $\sigma: X \to Y$, i.e., $\sigma$ is a bijection and there exists a constant $C>0$ such that
$$C^{-1}d_1(x,y)\le d_2(\sigma(x), \sigma(y))\le C d_1(x,y), \quad\text{for all } x,y\in X.$$

The Lipschitz equivalence of Cantor sets was first considered in \cite{CoPi88} and \cite{FaMa92}. For its extension on self-similar sets, it has been undergoing rapid development recently (\cite{DaSe97}\cite{RaRuXi06}\cite{XiRu07}\cite{MaSa09}\cite{XiXi10}\cite{RaRuWa10}\cite{LlMa10}\cite{DeHe12}\cite{RuWaXi12}\cite{RaZh15}\cite{XiXi12}\cite{XiXi13}). However, most of the studies are based on the nice geometric structure of self-similar sets such as Cantor sets or totally disconnected self-similar sets with OSC. There are few results on the non-totally disconnected self-similar sets (\cite{LuLi16}\cite{RuWa16}) or self-similar sets without OSC.

In this paper, we unify our previous approaches on augmented trees and consider the Lipschitz equivalence of more general self-similar sets which allow non-equal contraction ratios and substantial overlaps.

Let $\{S_i\}_{i=1}^N$ be an IFS  on ${\mathbb{R}}^d$ where $S_i$ has a contraction ratio $r_i\in (0,1)$, let $K$ be the self-similar set of the IFS satisfying $K={\bigcup}_{i=1}^N S_i(K)$. Let $\Sigma =\{1, \dots, N\}$ and ${\Sigma^*}=\bigcup_{n=0}^{\infty} \Sigma^n$ be the symbolic space (by convention, $\Sigma^0 = \emptyset$). For $x = i_1\cdots i_k\in \Sigma^*$, we denote by $S_x=S_{i_1}\circ   \cdots   \circ S_{i_k}$, and $r_x= r_{i_1} \cdots r_{i_k}$. Let $r=\min\{r_i: i=1,2,\dots, N\}$.  We also define a new symbolic space
\begin{equation*}
{X}_n = \{x = i_1\cdots i_k\in \Sigma^*:\  r_x \leq r^n < r_{i_1} \cdots r_{i_{k-1}}\} \quad \text{and}\quad X=\bigcup_{n=0}^\infty X_n.
\end{equation*}
In the special case that all $r_i$ are equal, $X=\Sigma^*$. In general, $X$ is a proper subset of $\Sigma^*$.  If $x \in {X}_n$, we denote the length by $|x|=n$, and say that $x$ lies in level $n$. The $X$ has a natural tree structure according to the standard concatenation of finite words, we denote the edge set by $\E_v$ ($v$ for vertical). We also define a horizontal edge for a pair $(x,y)$ in $X\times X$ if $x, y\in X_n$ and $\text{dist}(S_x(K), S_y(K))\le \kappa r^{|x|}$ where $\kappa>0$ is a fixed constant, and denote this set of edges by $\E_h$ ($h$ for horizontal). Let $\E=\E_v\cup \E_h$, then the graph $(X, \E)$ is an augmented tree in the sense of Kaimanovich (\cite{Ka03}\cite{LaWa16}). Lau and Wang \cite{LaWa16} already showed that the augmented tree $(X, \E)$  is a Gromov hyperbolic graph and its hyperbolic boundary $\partial X$ under a visual metric is H\"older equivalent to the $K$.

Based on this, our approach to the Lipschitz equivalence of self-similar sets is to lift the consideration to the augmented tree $(X,\E)$. We define a {\it horizontal component} of $X$ to be the maximal connected horizontal subgraph $T$ in some level with respect to $\E_h$. Let $\mathcal F$ be the set of all horizontal components of $X$. For $T\in{\mathcal F}$, we use $T_{\mathcal D}$ to denote the union of $T$ and its all descendants, with the subgraph structure inherited from $(X, \E)$. We say that $T, T'\in {\mathcal F}$ are equivalent if $T_{\mathcal D}$ and $T'_{\mathcal D}$ are graph isomorphic. We call $(X, \E)$ {\it simple} if there are only finitely many equivalence classes in $\mathcal F$.  Similarly, we can define a simple tree for $(X,\E_v)$ if there are only finitely many equivalence classes of vertices. Obviously, if $(X, \E)$ is simple then $(X, \E_v)$ is simple. We use $A$ (resp. $B$) to denote the incidence matrix of $(X, \E)$ (resp. $(X, \E_v)$), which encodes graph relation of the equivalence classes. Let a vector $\bu$ represent the finite classes in ${\mathcal F}$ by means of the classes of vertices. (see Definition \ref{def of simple tree} and Remark \ref{B-u-remark}.) The following is our first main result

\begin{theorem}\label{th-main}
Suppose an augmented tree $(X, {\mathcal E})$ is simple, and suppose the incidence matrix $A$ is $(B,\bu)$-rearrangeable. Then there exists a near-isometry between the augmented tree $(X, {\mathcal E})$ and the tree $(X,{\mathcal E}_v)$, so that $\partial (X,\E) \simeq \partial (X,\E_v)$.
\end{theorem}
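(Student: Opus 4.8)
The plan is to construct an explicit level-preserving bijection $\phi\colon X\to X$ which is a near-isometry from $(X,\mathcal E)$ to $(X,\mathcal E_v)$, and then to invoke the general principle that a near-isometry between Gromov hyperbolic graphs induces a bi-Lipschitz homeomorphism of the hyperbolic boundaries (with visual metrics of a common parameter). Since $(X,\mathcal E)$ is Gromov hyperbolic by \cite{LaWa16} and $(X,\mathcal E_v)$ is a tree, once such a $\phi$ is produced the conclusion $\partial(X,\mathcal E)\simeq\partial(X,\mathcal E_v)$ is automatic. Moreover, the hyperbolic boundary depends only on the ``tail'' of the graph, so one is free to modify $\phi$ on any finite union of levels; I would therefore construct $\phi$ only from some level $n_0$ onwards and extend it arbitrarily below.

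First I would reduce the problem to a comparison of two substitution trees. By simplicity there are only finitely many isomorphism types of descendant trees $T_{\mathcal D}$, $T\in\mathcal F$, and (hence) only finitely many isomorphism types of vertex-subtrees of $(X,\mathcal E_v)$. Collapsing each horizontal component of $(X,\mathcal E)$ to a point yields a tree $\widehat X$ whose branching at depth $n$ is governed by the incidence matrix $A$, while $(X,\mathcal E_v)$ is the tree whose branching is governed by $B$. Because every horizontal component has size bounded by a constant depending only on the finitely many types, the quotient map $X\to\widehat X$ changes Gromov products by a bounded amount, so $\partial(X,\mathcal E)$ is near-isometric, hence bi-Lipschitz, to $\partial\widehat X$ (this is where hyperbolicity, via the estimates of \cite{LaWa16}, is used). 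It then suffices to build a near-isometry between $\widehat X$ and $(X,\mathcal E_v)$. The vector $\bu$ is precisely what links the two trees: it records, for each horizontal-component type, the multiset of vertex-subtree types of which that component is composed, so that a depth-$n$ vertex of $\widehat X$ of type $i$ ``is'' a block of vertices of $(X,\mathcal E_v)$ whose type-composition is $\bu_i$.

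The heart of the argument is to use the hypothesis that $A$ is $(B,\bu)$-rearrangeable to carry out this block-matching consistently, level by level. I would argue by induction: assume that at level $n$ the vertices of $\widehat X$ have been matched with blocks of vertices of $(X,\mathcal E_v)$, each block a union of at most $M$ vertices ($M$ uniform), with type-compositions compatible via $\bu$. The rearrangeability condition is exactly the combinatorial statement that the children of such a $\widehat X$-vertex---whose type-multiplicities are read off from $A$---can be re-bundled from the children of the corresponding $(X,\mathcal E_v)$-block---whose type-multiplicities are read off from applying $B$ to $\bu$---into new blocks of size at most $M$ realizing a type-compatible matching; generalizing the rearrangeable-matrix device of \cite{LaLu13} is what allows the case of a nontrivial vertical tree. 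Since only finitely many local configurations occur, one fixes such a re-bundling once and for all for each configuration, obtaining a globally defined level-preserving bijection $\phi\colon\widehat X\to(X,\mathcal E_v)$. By construction $\phi$ distorts the tree metric by at most a constant, i.e. it is a near-isometry, so it preserves confluence of rays, and hence Gromov products, up to an additive constant; composing the three near-isometries $\partial(X,\mathcal E)\to\partial\widehat X\to\partial(X,\mathcal E_v)$ and using that such maps are bi-Lipschitz for visual metrics of the same parameter gives $\partial(X,\mathcal E)\simeq\partial(X,\mathcal E_v)$.

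The main obstacle is the inductive block-matching just described: one must verify (i) that the linear-algebraic identity built into the definition of $(B,\bu)$-rearrangeability really forces the type-multiplicities to balance at \emph{every} step, so that the re-bundling exists, and (ii) that the block sizes stay uniformly bounded along all levels. Point (ii) is exactly where ``simple'' is indispensable, since it reduces the admissible re-bundlings to a finite menu from which uniform choices can be made; without a uniform bound the map would only be a quasi-isometry and the boundary conclusion would degrade from Lipschitz to merely H\"older. A secondary, more routine, point is checking that collapsing horizontal components and then regrouping vertices genuinely preserves the confluence level that the visual metric measures, and that the finitely many bottom levels can be discarded without affecting the Lipschitz class of the boundary.
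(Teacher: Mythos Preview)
Your core mechanism is the paper's: match horizontal components to sibling-blocks in $(X,\mathcal E_v)$ level by level, using the rearranging matrix $C$ to split the child-components of a component $T\in[T_i]$ into $p=\#T$ groups whose type-compositions are exactly the rows $\bb_j$ of $B$. That is precisely what the paper does. The paper, however, does \emph{not} pass through the collapsed tree $\widehat X$; it builds the bijection $\sigma\colon X\to X$ directly, with the inductive hypotheses (1) ``$\sigma(T)$ lies under a single parent'' and (2) ``$x\sim\sigma(x)$'', and then reads off the near-isometry constant from the canonical-geodesic formula $|x\wedge y|=l-h/2$: since $\sigma(T)$ is a sibling set, the confluence level drops by at most $1$, and the horizontal part has length $\le c$, giving $|d(\sigma(x),\sigma(y))-d(x,y)|\le c+2$. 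The paper also inserts a preliminary WLOG step (replace $A,B$ by $A^k,B^k$ via Proposition~\ref{th-identity} so that $\max_i\#T_i\le\min_j\sum_\ell b_{j\ell}$), which is what guarantees that hypothesis~(1) can always be realised; your ``start at level $n_0$'' remark is not a substitute for this, since the issue recurs at every level.

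Your detour through $\widehat X$ is conceptually fine for the boundary statement but creates a real gap for the near-isometry statement itself. Neither the quotient $X\to\widehat X$ nor your $\phi\colon\widehat X\to(X,\mathcal E_v)$ is a bijection---$\widehat X$ has strictly fewer vertices than $X$ at each level---so neither is a near-isometry in the sense the theorem demands. What your block-matching actually produces is a pair of rough (non-bijective) isometries with common target $\widehat X$; to extract the promised bijection $X\to X$ you must still pick, for each component $T$, a type-preserving bijection $T\to\sigma(T)$ onto its block, and it is only at that point that the distance estimate can be checked. Once you do this you have recovered exactly the paper's $\sigma$, and you should then prove the additive bound explicitly via canonical geodesics rather than asserting ``by construction $\phi$ distorts the tree metric by at most a constant''.
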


The concept of rearrangeable matrix was initiated by the author (\cite{LaLu13}\cite{DLL15}\cite{L13}) in studying the Lipschitz equivalence.  A lot of nonnegative integer matrices including primitive matrices are rearrangeable. It  is a combinatorial device to arrange the vertices and edges of the augmented tree properly  to construct the near-isometry, which is stronger than the rough isometry in literature. In the paper, we extend the original definition of rearrangeable matrix  so as to solve more general situations (please see Definition \ref{def rearrangeable}).

For an IFS with substantial overlaps (i.e., without the OSC), it may happen that $S_x=S_y$ for $x\ne y$. In this case, the augmented tree $(X, \E)$ may be no longer simple. But we can modify the augmented tree by identifying $x, y\in X$ for $|x|=|y|$ and $S_x=S_y$, and let $(X^\sim, \E)$ denote the quotient space with the induced graph. Moreover, we can further reduce the graph $(X^\sim, \E_v)$ into a tree $(X^\sim, \E^*_v)$ (see Section 2).  Then following the same proof of Theorem \ref{th-main}, we have

\begin{theorem}\label{th-quotient}
Suppose the quotient space $(X^\sim, {\mathcal E})$ of  an augmented tree  is simple, and suppose the incidence matrix $A$ is $(B,\bu)$-rearrangeable. Then there exists a near-isometry between the quotient space $(X^\sim, {\mathcal E})$ and the reduced tree $(X^\sim, {\mathcal E}^*_v)$, so that $\partial (X^\sim, {\mathcal E}) \simeq \partial (X^\sim, {\mathcal E}^*_v)$.
\end{theorem}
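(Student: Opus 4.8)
The plan is to reduce Theorem~\ref{th-quotient} to the machinery already developed for Theorem~\ref{th-main}, treating the quotient space $(X^\sim,\E)$ in place of the augmented tree $(X,\E)$ and the reduced tree $(X^\sim,\E^*_v)$ in place of $(X,\E_v)$. The essential observation is that the passage to the quotient by the relation $x\sim y \iff (|x|=|y| \text{ and } S_x=S_y)$ preserves the combinatorial features that drive the proof of Theorem~\ref{th-main}: $(X^\sim,\E)$ is still a graded graph whose horizontal components fall into finitely many isomorphism classes (this is the hypothesis that $(X^\sim,\E)$ is \emph{simple}), it still admits an incidence matrix $A$ for $\E$ and an incidence matrix $B$ together with the class-vector $\bu$ for the vertical structure, and the identification $S_x=S_y$ is compatible with descendants, so the descendant-closure $T_{\mathcal D}$ of a horizontal component is well defined in the quotient. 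Thus the first step is to verify carefully that all the structural inputs used in the proof of Theorem~\ref{th-main} — the grading, the bounded valence of horizontal components, the finiteness of equivalence classes, and the consistency of $A$, $B$, $\bu$ — survive verbatim in $(X^\sim,\E)$, and that $(X^\sim,\E^*_v)$ really is a tree (here one uses the reduction of $(X^\sim,\E_v)$ to $(X^\sim,\E^*_v)$ described in Section~2, which removes the extra vertical multi-edges created by the identification without changing the boundary).

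The second step is to run the rearrangeability argument. Since $A$ is assumed $(B,\bu)$-rearrangeable, Definition~\ref{def rearrangeable} supplies, level by level, a way to re-index the vertices and the horizontal edges of $(X^\sim,\E)$ so that they line up with the vertical descendant structure encoded by $B$ and $\bu$; iterating this across levels and using that there are only finitely many equivalence classes (so the rearrangements can be chosen uniformly) produces a bijection $\Phi\colon X^\sim\to X^\sim$ that is the identity on levels, maps horizontal components onto ``vertical blocks'', and moves each vertex only a bounded graph-distance. One then checks that $\Phi$ is a near-isometry between $(X^\sim,\E)$ and $(X^\sim,\E^*_v)$: distances in the augmented quotient and in the reduced tree differ by at most an additive constant depending only on the finitely many classes and the rearrangement bound. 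This is exactly the content established in the proof of Theorem~\ref{th-main}, and the claim is that nothing in that argument used more than the structural inputs checked in the first step, so it transfers.

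The third step is to pass from the near-isometry to the Lipschitz equivalence of the hyperbolic boundaries. Both $(X^\sim,\E)$ and $(X^\sim,\E^*_v)$ are Gromov hyperbolic (the quotient of a hyperbolic graph by a grading-preserving identification with bounded fibers is again hyperbolic, and a tree is hyperbolic trivially), so each carries a hyperbolic boundary with a visual metric. A near-isometry — being in particular a rough isometry with the additive constant made sharp — induces a bijection between the two boundaries; standard hyperbolic-boundary theory (as in \cite{Wo00}, used already for Theorem~\ref{th-main}) shows that a rough isometry induces a bi-H\"older map of boundaries, and the extra strength of \emph{near}-isometry (bounded, not just coarse, displacement, with the constant controlled) upgrades bi-H\"older to bi-Lipschitz. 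Hence $\partial(X^\sim,\E)\simeq\partial(X^\sim,\E^*_v)$.

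The main obstacle I expect is the first step rather than the last two: one must confirm that the quotient construction does not secretly destroy simplicity or break the bookkeeping behind $(B,\bu)$-rearrangeability. Concretely, after identifying $x\sim y$ the horizontal edge set $\E_h$ on a quotient level can acquire new adjacencies (two classes that were far apart upstairs may become neighbors once their representatives are glued), and the descendant-closure $T_{\mathcal D}$ must be shown to depend only on the class of $T$ in the quotient, not on a choice of representative; verifying that the reduction $\E_v\rightsquigarrow\E^*_v$ is compatible with all of this — i.e.\ that collapsing duplicate vertical edges neither merges distinct horizontal components nor changes any $T_{\mathcal D}$ up to isomorphism — is the delicate point. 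Once that compatibility is in hand, ``following the same proof of Theorem~\ref{th-main}'' is legitimate and the rest is bookkeeping.
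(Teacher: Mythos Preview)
Your overall strategy---reduce to the machinery of Theorem~\ref{th-main}---matches the paper's, but the paper executes it more cleanly via an intermediate graph. It sets $Y_1=(X^\sim,\E_v\cup\E_h)$, $Y_2=(X^\sim,\E^*_v\cup\E_h)$, $Y_3=(X^\sim,\E^*_v)$, observes that the identity $Y_1\to Y_2$ is trivially a near-isometry (each deleted vertical edge $(x,y)$ has a retained parent $x'$ of $y$ lying in the same horizontal component as $x$, so the edge is replaced by a bounded-length detour), and then notes that $Y_2$ is genuinely an augmented tree in the sense of Section~2: its vertical part $\E^*_v$ \emph{is} a tree and $\E_h$ still respects it. Hence the proof of Theorem~\ref{th-main} applies to $Y_2$ verbatim to produce a near-isometry $Y_2\to Y_3$, and composition finishes in two lines.

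Your route instead runs Theorem~\ref{th-main} directly on $Y_1$ versus $Y_3$, which is what forces the compatibility checks you flag in your last paragraph: the canonical-geodesic formula \eqref{eq.cano.geo.identity} and the ancestor map $y\mapsto y^{-1}$ used in constructing $\sigma$ both presuppose a tree vertical structure, which $\E_v$ on $X^\sim$ does not have. Those checks can be carried out, but the factorization through $Y_2$ makes them disappear entirely. One minor inaccuracy worth correcting: the map $\sigma$ in Theorem~\ref{th-main} does \emph{not} ``move each vertex only a bounded graph-distance''; it preserves levels but may permute arbitrarily within a level, and near-isometry is the statement that \emph{pairwise} distances change by at most an additive constant.
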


An IFS is called {\it strongly separated} \cite{F} if $S_i(K)\cap S_j(K)=\emptyset$ for $i\ne j$, in this case  the $K$ is called {\it dust-like} (\cite{FaMa92}\cite{LaLu13}). By reducing the Lipschitz equivalence on the augmented trees in Theorems \ref{th-main} and \ref{th-quotient} to the self-similar sets, we have our second main result

\begin{theorem}\label{th-main2}
Under the assumption of Theorem \ref{th-main}, the associated self-similar set $K$ is Lipschitz equivalent to a dusk-like self-similar set with the same contraction ratios as for $K$.

Under the assumption of Theorem \ref{th-quotient}, the associated self-similar set $K$ is Lipschitz equivalent to a Cantor-type set (may be not a dust-like self-similar set).
\end{theorem}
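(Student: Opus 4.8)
The plan is to deduce Theorem~\ref{th-main2} from Theorems~\ref{th-main} and~\ref{th-quotient} by transferring the Lipschitz equivalence of hyperbolic boundaries down to the level of self-similar sets. The key structural fact is the result of Lau and Wang \cite{LaWa16} quoted in the introduction: for any IFS, the augmented tree $(X,\E)$ is Gromov hyperbolic and its hyperbolic boundary $\partial(X,\E)$, equipped with a visual metric $\rho_a$, is H\"older equivalent to $K$. Likewise $(X,\E_v)$ is a tree (trivially hyperbolic), and its boundary $\partial(X,\E_v)$ under a visual metric is, I claim, bi-Lipschitz to a dust-like self-similar set $\widehat K$ whose IFS has the same contraction ratios $\{r_i\}$ as $K$. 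The first step is therefore to make this last claim precise: the tree $(X,\E_v)$ is exactly the symbolic tree of the IFS (cut at the stopping-time levels $X_n$), so one builds $\widehat K$ from a strongly separated IFS $\{\widehat S_i\}$ on $\mathbb R^d$ with ratios $r_i$ (e.g. with widely spaced fixed points), identifies $\partial(X,\E_v)$ with the coding space of $\widehat K$, and checks that the visual metric on $\partial(X,\E_v)$ and the Euclidean metric on $\widehat K$ are comparable — this is essentially the dust-like case treated in \cite{LaLu13}, since distinct points in $\widehat K$ that first separate at level $n$ are at distance $\asymp r^n$, matching the visual parameter.

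Granting that, the argument is a short chain of equivalences. By Theorem~\ref{th-main}, the hypothesis ($(X,\E)$ simple and $A$ being $(B,\bu)$-rearrangeable) yields a near-isometry between $(X,\E)$ and $(X,\E_v)$, hence $\partial(X,\E)\simeq\partial(X,\E_v)$. Composing the H\"older (in fact, as one checks, bi-Lipschitz in the relevant range, because the metrics are of the form $d(x,y)\asymp r^{n(x,y)}$ on both sides) identification $K\leftrightarrow\partial(X,\E)$ with the bi-Lipschitz map $\partial(X,\E)\to\partial(X,\E_v)$ and then with $\partial(X,\E_v)\leftrightarrow\widehat K$, we get $K\simeq\widehat K$, and $\widehat K$ is dust-like with the same contraction ratios. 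This proves the first assertion. For the second assertion, one repeats the reasoning with Theorem~\ref{th-quotient} in place of Theorem~\ref{th-main}: the quotient augmented tree $(X^\sim,\E)$ has boundary bi-Lipschitz to $K$ (the identification $S_x\mapsto S_x(K)$ still works after the identification of words giving the same similitude), the reduced tree $(X^\sim,\E_v^*)$ has boundary equal to some Cantor-type set $C$ (it is the boundary of a tree, hence totally disconnected, but it need not be the attractor of a strongly separated IFS since the reduced tree may have non-uniform branching), and Theorem~\ref{th-quotient} gives $\partial(X^\sim,\E)\simeq\partial(X^\sim,\E_v^*)$, whence $K\simeq C$.

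The main point requiring care — and the place I expect the real work to sit — is verifying that the identification between a hyperbolic boundary and the corresponding self-similar (or Cantor-type) set is quantitatively bi-Lipschitz, not merely H\"older, \emph{when the two boundaries are compared through the near-isometry}. The subtlety is that \cite{LaWa16} only guarantees H\"older equivalence between $\partial(X,\E)$ and $K$ in general; however, the visual metric on a hyperbolic boundary coming from an augmented tree has the explicit form $\rho_a(\xi,\eta)\asymp a^{\,n}$ where $n$ is the level at which the geodesics to $\xi,\eta$ part, and on the self-similar set side $\dist(S_x(K),S_y(K))$ is controlled above and below by $r^{|x|}$ precisely in the strongly separated / Cantor-type situation. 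So on the target tree $(X,\E_v)$ (or $(X^\sim,\E_v^*)$) one chooses the visual parameter $a=r$, arranges the dust-like model $\widehat K$ (resp. Cantor-type $C$) to realize exactly these separation scales, and then the identification is bi-Lipschitz by a direct comparison of the two scales. One must also check that the near-isometry of Theorem~\ref{th-main}, being stronger than a rough isometry, descends to a bi-Lipschitz map of boundaries with a \emph{uniform} constant — this is standard for rough isometries of hyperbolic spaces, and the strengthening to near-isometry only improves the constant; I would cite the boundary-map construction already used in \cite{LaLu13} and \cite{DLL15}. Finally, one should note the harmless point that $\partial(X,\E_v)$ as an abstract metric space does not depend on the ambient dimension $d$, so the dust-like model $\widehat K$ may be taken in $\mathbb R^1$ or $\mathbb R^d$ as convenient, with the same contraction ratios $\{r_i\}$ as required in the statement.
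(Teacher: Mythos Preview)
Your strategy matches the paper's: invoke Theorem~\ref{th-main} (resp.~\ref{th-quotient}) for $\partial(X,\E)\simeq\partial(X,\E_v)$ (resp.~$\partial(X^\sim,\E)\simeq\partial(X^\sim,\E_v^*)$), realise the tree $(X,\E_v)$ as the augmented tree of a strongly separated IFS with the same ratios so that its boundary is a dust-like $\widehat K$ (resp.\ observe that the boundary of the reduced tree is merely a Cantor-type set, since that tree need not arise from any IFS), and then transfer down via Lau--Wang's Theorem~\ref{th-holder-equ}. Where you diverge is precisely the step you flag as delicate: you try to make each boundary-to-set identification individually bi-Lipschitz by tuning the visual parameter (your ``$a=r$''; in the paper's convention $\rho_a=e^{-a|x\wedge y|}$ this means $a=-\log r$, so that $\alpha=1$), which would still require checking that this $a$ is admissible for the hyperbolicity constant of $(X,\E)$. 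The paper sidesteps this entirely: it keeps an arbitrary small $a$ and uses that the two H\"older identifications $K\leftrightarrow\partial(X,\E)$ and $\widehat K\leftrightarrow\partial(X,\E_v)$ carry the \emph{same} exponent $\alpha=-\log r/a$ (same minimum ratio $r$ on both sides). Writing $\tau=\Phi_2\circ\varphi\circ\Phi_1^{-1}$ with $\varphi$ the bi-Lipschitz boundary map, one gets directly
\[
|\tau(x)-\tau(y)|\;\le\; C_2\,\rho_a\bigl(\varphi\Phi_1^{-1}(x),\varphi\Phi_1^{-1}(y)\bigr)^{\alpha}\;\le\; C_2C_0^{\alpha}\,\rho_a\bigl(\Phi_1^{-1}(x),\Phi_1^{-1}(y)\bigr)^{\alpha}\;\le\; C_2C_0^{\alpha}C_1\,|x-y|,
\]
the exponents cancelling without ever being~$1$. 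Your route can be patched, but this exponent-matching trick is the cleaner device and is what the paper actually does.
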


The first part of Theorem \ref{th-main2} is applied for the self-similar sets satisfying the OSC; while the second part is applied for the self-similar sets satisfying the weak separation condition (WSC) \cite{LaNg99}. The WSC contains many important overlapping cases, it has been studied extensively in connection with the multifractal structure of self-similar measures (see \cite{LaNg99}\cite{FL09}\cite{DeLaNg13} and the references therein).

Finally, we provide a criterion for the hyperbolic boundary (or the associated self-similar set) to be totally disconnected (i.e., a Cantor-type set) from the perspective of augmented trees which also answers  an open question raised in \cite{LaLu13}.

\begin{theorem}\label{th-main3}
Let $(X, {\mathcal E})$ be an augmented tree of bounded degree. Then the hyperbolic boundary $\partial X$ (or the self-similar set $K$) is totally disconnected if and only if the sizes of horizontal components are uniformly bounded.

The theorem is still valid if we replace $X$ by its quotient space $X^\sim$.
\end{theorem}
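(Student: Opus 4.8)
The plan is to prove both directions by relating horizontal components in the graph to connectedness in the hyperbolic boundary $\partial X$, which by the Lau--Wang result is Hölder equivalent to $K$; since total disconnectedness is a topological property preserved under homeomorphism, it suffices to argue on $\partial X$. Recall that a point of $\partial X$ is an equivalence class of geodesic rays from a fixed root, and the visual metric decays like $e^{-a|x|}$ along the confluent. The key geometric dictionary is: two boundary points $\xi,\eta$ are ``close'' exactly when their rays share a long initial segment, and a chain of horizontal edges at level $n$ between vertices $x,y$ produces, after passing to descendants, a ``path-like'' family in $\partial X$ joining the cylinders $\partial(xX)$ and $\partial(yX)$; the diameter of the connected piece it produces is controlled by (size of the horizontal component)$\times e^{-an}$.

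For the easy direction ($\Leftarrow$), suppose the horizontal components have sizes bounded by some $M$. I would show $\partial X$ is totally disconnected by exhibiting, around any boundary point $\xi$ and at any scale, a clopen neighborhood of small diameter. Concretely, fix the ray representing $\xi$ and its vertex $x_n$ at level $n$; let $T$ be the horizontal component containing $x_n$ and consider the union $U_n$ of $\partial(yX)$ over $y\in T$. Because $|T|\le M$ and because a horizontal edge can only exist between a vertex and one of its ``horizontal neighbors,'' one checks that $U_n$ is both open and closed in $\partial X$ (no horizontal edge at level $n$ escapes $T$, and the bounded-degree hypothesis controls how components at adjacent levels sit over one another), and $\mathrm{diam}(U_n)\le C M e^{-an}\to 0$. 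This produces a neighborhood basis of clopen sets at $\xi$, hence $\partial X$ is totally disconnected. The bounded-degree assumption enters to guarantee finiteness of the relevant unions and uniform constants.

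For the harder direction ($\Rightarrow$), I argue the contrapositive: if the sizes of horizontal components are \emph{not} uniformly bounded, then $\partial X$ has a nontrivial connected subset, so it is not totally disconnected. Choose horizontal components $T_k$ with $|T_k|\to\infty$, living at levels $n_k$. Each $T_k$, being a connected horizontal chain, yields (via its endpoint cylinders and the induced sub-self-similar structure on descendants) a ``necklace'' of cylinder sets whose union $V_k\subset\partial X$ is connected, and whose diameter is bounded \emph{below} — here is the crucial point — by a constant independent of $k$: a long horizontal chain at level $n_k$ must, because each horizontal edge only bridges distance $\le\kappa r^{n_k}$ while the whole chain has length $\asymp |T_k| r^{n_k}$, span a spatial extent that does \emph{not} shrink to $0$, so the corresponding boundary set has definite size. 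Passing to a subsequence so that these connected sets converge (in the Hausdorff sense on the compact space $\partial X$) to a limit set $V_\infty$, one shows $V_\infty$ is connected and has positive diameter, contradicting total disconnectedness. The final statement about $X^\sim$ follows because the quotient map only identifies vertices with $S_x=S_y$, which changes neither the sizes of horizontal components in an essential way nor the boundary $\partial X^\sim$ up to homeomorphism with $K$.

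The main obstacle I anticipate is making the ``necklace'' construction and its diameter \emph{lower} bound precise in the $\Rightarrow$ direction: one must show that an arbitrarily long horizontal chain really forces a boundary subset of diameter bounded below by an absolute constant, and that the Hausdorff limit of these subsets is genuinely connected (not merely a limit of connected sets, which can fail without uniform control). This is where the uniform lower bound on spatial extent — coming from the fact that consecutive cylinders in the chain are within $\kappa r^{n}$ while the chain is long — together with the compactness of $\partial X$ and the bounded-degree hypothesis, must be combined carefully. The $\Leftarrow$ direction's clopen-ness claim is routine once the dictionary between horizontal components and boundary neighborhoods is set up, but it does rely on observing that a level-$n$ horizontal component is ``saturated,'' i.e., no descendant of it can be horizontally adjacent to a descendant of a vertex outside it without that being already visible at level $n$ — a fact I would extract from the definition of $\E_h$ and the standard geometry of the $S_x(K)$.
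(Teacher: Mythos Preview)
Your $\Leftarrow$ direction is fine and in fact more direct than the paper's: the paper argues the contrapositive on $K$, covering a hypothetical nontrivial connected component $C\subset K$ by a growing chain of level-$n$ cells, whereas your clopen-basis construction on $\partial X$ works straightaway once one notes that for $x\in T$ and $y\in X_n\setminus T$ one has $\mathrm{dist}(S_x(K),S_y(K))>\kappa r^n$, so $\bigcup_{x\in T}S_x(K)$ is clopen in $K$ with diameter $\le M(\kappa+|K|)\,r^n$.

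The $\Rightarrow$ direction, however, has a real gap, and it is precisely the one you flag. Your heuristic for the diameter lower bound is inverted: that consecutive cells in a horizontal chain are within $\kappa r^{n_k}$ and that there are $|T_k|$ of them yields only the \emph{upper} bound $\mathrm{diam}\big(\bigcup_{x\in T_k}S_x(K)\big)\lesssim |T_k|\,r^{n_k}$ via the triangle inequality; it says nothing against the chain coiling up. Even invoking the OSC (which is what bounded degree gives), a volume count produces only $\mathrm{diam}\gtrsim |T_k|^{1/d}r^{n_k}$, and this can tend to zero when $|T_k|$ grows subexponentially in $n_k$. So nothing you have written rules out the Hausdorff limit of your necklaces being a single point. (The connectedness of $V_k$ is also false as stated---adjacent cells need not touch, and each $S_x(K)$ is a copy of $K$---but this part is repairable: the $V_k$ are $O(r^{n_k})$-chain-connected, and a Hausdorff limit of such sets is connected. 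The diameter issue is the genuine obstruction.)

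The paper's route for this direction is quite different and avoids any spatial diameter estimate. It first proves a graph-metric statement (Lemma~5.2): using bounded degree and a pigeonhole on ancestors, any sequence of horizontal components with $\#T_n\to\infty$ must contain pairs $a_n,b_n\in T_n$ with $d(a_n,b_n)\to\infty$; from this one gets that the set $T\subset\partial X$ of accumulation points of the $T_n$ is not a singleton. Then, assuming $\partial X$ totally disconnected, one splits $T=A\sqcup B$ into nonempty closed pieces, lets $U_n,V_n$ be the parts of $T_n$ near $A,B$, and uses an intermediate-value lemma on horizontal paths (Lemma~5.4) to locate $z_n\in T_n$ with $d(z_n,U_n),\,d(z_n,V_n)\ge\tfrac13 d(U_n,V_n)\to\infty$; any accumulation point of the $z_n$ lies in $T$ but in neither $A$ nor $B$, a contradiction. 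If you want to rescue your Hausdorff-limit strategy, the missing ingredient is exactly the paper's Lemma~5.2 (bounded degree forces large graph-diameter inside big components), from which non-triviality of the limit set follows; your ``chain-length'' reasoning does not supply it.
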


The paper is organized as follows: In Section 2, we recall basic results on hyperbolic graphs and introduce the notion of  simple augmented tree;  moreover, we state Theorem \ref{th-main}  and prove Theorem \ref{th-quotient} there. In Section 3, we generalize the concept of rearrangeable matrix and prove Theorem \ref{th-main}.  In Section 4, we prove Theorem \ref{th-main2} and give two examples to illustrate our results. Theorem \ref{th-main3} will be showed in Section 5. We include some remarks and open questions in Section 6.

\section{Simple augmented trees}
In this paper,  an infinite connected graph $(X,\E)$ will be considered, where $X$ is a set of vertices which is countably infinite, $\E$ is a set of edges which is a symmetric subset  of $(X \times X) \setminus \{(x,x) : x \in X\}$.   By a {\it path} in $(X,\E)$ from $x$ to $y$, we mean a finite sequence $x =x_0, x_1, \dots , x_n=y$ such that $(x_i,x_{i+1})\in {\mathcal E}, i=0, \dots, n-1$. We call $X$ a {\it tree} if the path between any two distinct vertices is unique. We equip a graph $X$ with an integer-valued metric $d(x,y)$, which is the length of a geodesic, denoted by $\pi(x,y)$, from $x$ to $y$. Let $\vartheta \in X$ be a reference point in $X$ and call it the {\it root} of the graph. We use $|x|$ to denote $d(\vartheta,x),$ and say that $x$ belongs to the $n$-th level of the graph if $d(\vartheta,x)=n$.

Recall that a graph $X$ is called {\it hyperbolic} (with respect to $\vartheta$) if there is $\delta >0$ such that
$$|x \wedge y| \geq \min\{|x \wedge z|,  |z\wedge y|\}-\delta \quad \text{for any }   x,y,z \in  X,$$
where $|x \wedge y| :=\frac{1}{2}(|x|+|y|-d(x,y))$ is the {\it Gromov product} (\cite{Gr87}\cite{Wo00}). We choose $a>0$, and define for $x,y\in X$ and a root $\vartheta$
\begin{equation}\label{eq2.1}
{\rho_a}(x,y)=\left\{
\begin{array}{ll}
0 & x=y \\
\exp(-a |x \wedge y| ) &  x\ne y.
\end{array} \right.
\end{equation}
This is not necessarily a metric unless $X$ is a tree. However $\rho_a$ is equivalent to a metric as long as $a$ is small enough (i.e., $\exp(3\delta a)<\sqrt{2}$ \cite{Wo00}). Hence we regard $\rho_a$ as a {\it visual metric} for convenience. The {\it hyperbolic boundary} of $X$ is defined by $\partial X := \overline{X}\setminus X$, where $\overline{X}$ is the  $\rho_a$-completion of $X$.

It is often useful to identify $\xi \in \partial X$ with the {\it geodesic rays} in $X$ that converge to $\xi$, i.e., an infinite path $\pi[x_1,x_2,\dots]$ such that $(x_i, x_{i+1})\in \E$ and any finite segment of the path is a geodesic. It is known that two geodesic rays $\pi[x_1,x_2,\dots], \pi[y_1,y_2,\dots]$ are equivalent as Cauchy sequence  if and only if $d(x_n, y_n)\le c_0$ for all but finitely many $n$, where $c_0>0$ is independent of the rays \cite{Wo00}.

Both Gromov product and the visual metric $\rho_a$ can be extended onto the completion $\overline{X}$ \cite{BH99}, and the hyperbolic boundary  $\partial X$ is a compact subset of $\overline{X}$. We mention that a tree is always a hyperbolic graph with $\delta=0$ and its hyperbolic boundary is a Cantor-type set.

Throughout the paper, we let  $\{S_i\}_{i=1}^N$  be an IFS of contractive similitudes on ${\mathbb{R}}^d$ with contraction ratios $r_i\in (0,1), i=1,\dots, N$. Then there exists a unique non-empty compact subset $K$ of ${\mathbb{R}}^d$ \cite{Hu81} such that
\begin{equation}\label{set equ}
K=\bigcup_{i=1}^N S_i(K).
\end{equation}
We call such $K$ a {\it self-similar set}, and call $K$ {\it dust-like}  if it satisfies $S_i(K) \cap S_j(K)  = \emptyset $ for $i \not = j$. A dust-like self-similar set must be totally disconnected, but the converse is not true.

Let $\Sigma =\{1, \dots, N\}$ and let ${\Sigma^*}=\bigcup_{n=0}^{\infty} \Sigma^n$ be the symbolic space representing the IFS (by convention,
$\Sigma^0 = \emptyset$). For $x = i_1\cdots i_k$, we denote by $S_x=S_{i_1}\circ   \cdots   \circ S_{i_k}$, $K_x:= S_x(K)$ and $r_x= r_{i_1} \cdots r_{i_k}$. Let $r=\min\{r_i: i=1,2,\dots, N\},  X_0=\emptyset$, we define a new symbolic space
\begin{equation*}
{X}_n = \{x = i_1\cdots i_k\in \Sigma^*:\  r_x \leq r^n < r_{i_1} \cdots r_{i_{k-1}}\} \quad \text{and}\quad X=\bigcup_{n=0}^\infty X_n.
\end{equation*}
In general $X$ is a proper subset of $\Sigma^*$. If $x = i_1\cdots i_k\in {X}_n$, we denote the length by $|x|=n$, and say that $x$ is in level $n$ (here $|x|$ is not the real length of the word $x$ in  $\Sigma^*$).

Note that for each $y\in X_n, n\ge 1$ there exists a unique $x\in X_{n-1}$ and $z\in \Sigma^*$ such that $y=xz$ in the standard concatenation. This defines a natural tree structure on $X$: $x,y$ are said to be joined by a vertical edge denoted by $(x,y)$ (or $(y,x)$). Let $\E_v$ be the set of all vertical edges, then $(X,\E_v)$ is a tree with root $\vartheta=\emptyset$. For $y\in X$, let  $y^{-1}$ denote $x\in X$ such that $(x,y)\in {\mathcal E}_v$ and  $|y|=|x|+1$, we call such $x$ an ancestor of $y$.  Define inductively  $y^{-n}=(y^{-(n-1)})^{-1}$ to be the $n$-th generation ancestor of $y$.

Given a tree $(X,\E_v)$, we can add  more (horizontal)  edges according to \cite{LaWa16}:
$${\mathcal E}_h=\{(x,y)\in X\times X: |x|=|y|, x\ne y \ \text{and}\  \text{dist}(S_x(K), S_y(K))\le \kappa r^{|x|}\}.$$
Trivially, $\E_h$ is a symmetric subset of $X\times X$ and satisfies:
$$
(x,y)\in {\mathcal E}_h \quad \Rightarrow \quad |x|=|y| \ \ \text{and}\ \ \text{either}~ x^{-1}=y^{-1} ~\text{or}~ (x^{-1},y^{-1})\in {\mathcal E}_h.
$$
Let ${\mathcal E}={\mathcal E}_v\cup {\mathcal E}_h$, then the graph $(X,{\mathcal E})$ is called an {\it augmented tree} in the sense of Kaimanovich \cite{Ka03}.

The above horizontal edge set $\E_h$ is modified from the previous works (\cite{Ka03}\cite{LaWa09}\cite{Wa14}) where the last condition was $S_x(K)\cap S_y(K)\ne\emptyset$ (i.e., $\kappa=0$). The advantage of this new definition of $\E_h$ is that it allows us to eliminate some superfluous conditions added before.

We say that a geodesic $\pi(x,y)$ is a {\it horizontal geodesic} if it consists of horizontal edges only; it is called a {\it canonical geodesic}  if there exist $u,v\in \pi(x,y)$ such that $\pi(x,y)=\pi(x,u)\cup\pi(u,v)\cup\pi(v,y)$ with $\pi(u,v)$ a horizontal path and $\pi(x,u),\  \pi(v,y)$ vertical paths. Moreover,  the horizontal part of the canonical geodesic  is required to be on the highest level (i.e., for any geodesic $\pi^{\prime}(x,y)$, $\text{dist}(\vartheta,\pi(x,y))\leq \text{dist}(\vartheta,\pi^{\prime}(x,y))$). Then the Gromov product of an augmented tree can be expressed by
\begin{equation}\label{eq.cano.geo.identity}
|x\wedge y| = l-h/2
\end{equation}  where $l$ and $h$ are the level and the length of the horizontal part of the canonical geodesic $\pi(x,y)$.

\begin{theorem}[\cite{LaWa09}]{\label{th2.3}}
An augmented tree $(X, {\mathcal E})$ is hyperbolic if and only if there exists a constant $c>0$ such that any  horizontal  geodesic is bounded by $c$.
\end{theorem}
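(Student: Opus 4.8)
The plan is to prove both implications through the canonical geodesic description of the Gromov product, equation (\ref{eq.cano.geo.identity}), combined with the vertical projection property of horizontal edges recorded just before the definition of $\E$, namely that $(a,b)\in\E_h$ forces $a^{-1}=b^{-1}$ or $(a^{-1},b^{-1})\in\E_h$. The heart of the matter is that this property lets horizontal connections be ``pushed up'' the tree without lengthening, while (\ref{eq.cano.geo.identity}) lets bounded horizontal geodesics be converted into a sharp two-sided estimate on $|x\wedge y|$.

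For the easy direction (hyperbolic $\Rightarrow$ bounded horizontal geodesics), I would test the hyperbolic inequality on a single horizontal geodesic. Let $\pi(u,v)$ be a horizontal geodesic of length $h$, so $u,v$ and every intermediate vertex lie on a common level $n$ and $d(u,v)=h$. Let $w$ be a midpoint of $\pi(u,v)$; it again lies on level $n$, with $d(u,w)=\lfloor h/2\rfloor$ and $d(w,v)=\lceil h/2\rceil$. Directly from the definition of the Gromov product, $|u\wedge v|=n-h/2$, whereas $|u\wedge w|$ and $|w\wedge v|$ are each at least $n-\lceil h/2\rceil/2$. Substituting into $|u\wedge v|\ge\min\{|u\wedge w|,|w\wedge v|\}-\delta$ collapses to $\lceil h/2\rceil+2\delta\ge h$, hence $h\le 4\delta+1$. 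Thus every horizontal geodesic is bounded by $c:=4\delta+1$; the only point to verify is that $w$ sits on level $n$, which is automatic because the geodesic is horizontal.

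The substantive direction is bounded horizontal geodesics $\Rightarrow$ hyperbolic. By (\ref{eq.cano.geo.identity}) the horizontal part of the canonical geodesic of a pair $(x,y)$ is itself a geodesic made of horizontal edges, hence a horizontal geodesic of length $h\le c$; writing $l$ for its level we get $|x\wedge y|=l-h/2$, so $l\ge|x\wedge y|$ and the level-$l$ ancestors of $x$ and $y$ are joined by a horizontal path of length $\le c$. The projection property shows that projecting such a path one level toward the root cannot increase its length, so this length-$\le c$ connection survives, still of length $\le c$, at every shallower level. Now given $x,y,z$, let $l_1\ (\ge|x\wedge z|)$ and $l_2\ (\ge|z\wedge y|)$ be the two canonical crossing levels and set $m=\min\{l_1,l_2\}$. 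Projecting both length-$\le c$ connections down to level $m$ and splicing them through the common level-$m$ ancestor of $z$ yields a horizontal path of length $\le 2c$ between the level-$m$ ancestors of $x$ and $y$. Running up to level $m$, across, and down gives $d(x,y)\le|x|+|y|-2m+2c$, that is $|x\wedge y|\ge m-c\ge\min\{|x\wedge z|,|z\wedge y|\}-c$, which is hyperbolicity with $\delta=c$.

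I expect the one genuinely delicate point to be isolating the correct invariant in the hard direction. The naive candidate, the deepest level at which the two ancestors are connected \emph{at all}, does not work: a horizontal component may be arbitrarily wide at its own level yet pinch to a short bridge one level up, so crossing at the deepest level is far from optimal and the bound $|x\wedge y|\ge(\text{deepest level})-c/2$ simply fails. Restricting attention to connections of bounded horizontal length $\le c$ repairs this, because these are precisely what the hypothesis controls and, crucially, are the connections preserved with the same length bound under upward projection. Once this is recognized the rest is the routine length bookkeeping indicated above.
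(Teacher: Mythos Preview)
The paper does not supply its own proof of Theorem~\ref{th2.3}; it is quoted from \cite{LaWa09} and used as a black box. So there is no in-paper argument to compare against. That said, your proposal is correct and is essentially the argument given in \cite{LaWa09}: the forward direction is the direct Gromov-product computation with a midpoint on a horizontal geodesic, and the converse hinges on the existence of canonical geodesics (which underlies \eqref{eq.cano.geo.identity}) together with the ancestor-projection property of $\E_h$ to splice two short horizontal bridges at a common level. Your diagnosis of the delicate point---that one must track horizontal connections of length $\le c$, not mere connectivity, because only the former is preserved under projection toward the root---is exactly right.

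One remark on dependencies: your hard direction invokes \eqref{eq.cano.geo.identity}, which presupposes that canonical geodesics exist for arbitrary augmented trees, \emph{without} assuming hyperbolicity. This is indeed established in \cite{LaWa09} from the augmented-tree axioms alone, so there is no circularity, but it is worth flagging since the present paper states \eqref{eq.cano.geo.identity} without proof.
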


By using the above criterion for hyperbolicity, Lau and Wang \cite{LaWa16} showed that the augmented tree $(X, \E)$ is always hyperbolic.  On an augmented tree, for $T\subset {X_n}$, we use $T_{\mathcal D}$ to denote the union of $T$ and its all descendants, that is,
$$T_{\mathcal D}=\{x\in X:\  x|_n\in T\}$$
where $x|_n$ is the initial part of $x$ with length $n$. If $T$ is connected, then $T_{\mathcal D}$, equipped with the edge set $\E$ restricted on $T_{\mathcal D}$, is a subgraph of $X$. Moreover, if $(X,{\mathcal E})$ is hyperbolic, then $T_{\mathcal D}$ is also hyperbolic. We say  that $T$ is an {\it $X_n$-horizontal component} if \ $T\subset {X_n}$ is a maximal connected subset with respect to ${\E}_h$.  We let $\F_n$ denote the family of all  $X_n$-horizontal components, and let $\F=\cup_{n\ge 0}\F_n$.

For $T, T'\in\F$, we  say  that $T$ and $T'$ are {\it equivalent}, denote by $T \sim T'$, if there exists a graph isomorphism $g:\, T_\D\to T'_\D$, i.e., the map $g$ and the inverse map $g^{-1}$ preserve the vertical and horizontal edges of $T_\D$ and $T'_\D$. We denote  by $[T]$ the equivalence class containing $T$.

\begin{Def} [\cite{DLL15}] \label{def of simple tree}
We call an augmented tree $(X, \E)$  {\it simple} if the equivalence classes in $\F$ is finite, i.e., ${\mathcal F}/\sim$ is finite. Let $[T_1],\dots,[T_m]$ be the equivalence classes, and let $a_{ij}$  denote the cardinality of the horizontal components of offspring of $T \in [T_i]$ that belong to $[T_j]$.  We call $A =[a_{ij}]_{m\times m}$ the incidence matrix of $(X,\E)$.
\end{Def}

\begin{Rem}\label{B-u-remark}
Similarly, for the tree $(X, \E_v)$, we say that two vertices $x,y$ of $X$ are {\it equivalent}, denote by $x\sim y$,  if  the two subtrees $\{x\}_{\D}$ and $\{y\}_{\D}$ (defined as $T_{\mathcal D}$ above) are graph isomorphic. We call the tree $(X, \E_v)$ {\it simple} if  $X$ has only finitely many non-equivalent vertices, say $[\bt_1],\dots, [\bt_n]$ (For convenience we always assume the root $\vartheta\in [\bt_1]$). In this case, we denote the incidence matrix by $B=[b_{ij}]_{n\times n}$, where $b_{ij}$  means the cardinality of offspring of a vertex $x\in [\bt_i]$ that belong to $[\bt_j]$.  Obviously, if an augmented tree $(X, \E)$ is simple then the associated tree $(X, \E_v)$ is also simple. Therefore any horizontal component $T$ can be represented by the equivalence classes of vertices $[\bt_1],\dots, [\bt_n]$.  More precisely, for $T\in [T_i]$ where $1\le i\le m$, suppose $T$ consists of $u_{ij}$ vertices belonging to $[\bt_j], j=1,\dots, n$, then we represent it by using a vector $\bu_i=[u_{i1},\dots, u_{in}]$ (relative to $[\bt_1],\dots, [\bt_n]$), and let $\bu=[\bu_1,\dots, \bu_m]$. Then $\bu$ is a representation of the classes $[T_1],\dots,[T_m]$ with respect to the classes $[\bt_1],\dots, [\bt_n]$.
\end{Rem}

\begin{Prop}\label{th.simple tree}
Let $(X, \E_v)$ and $(Y,\E'_v)$ be two simple trees defined as above. If they have the same incidence matrix $B$, then they are graph isomorphic.
\end{Prop}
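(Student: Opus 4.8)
The plan is to construct the graph isomorphism $g:X\to Y$ level by level, using the finiteness of the equivalence classes together with the data recorded in $B$ to guarantee that the local picture at each vertex can be matched. Fix enumerations $[\bt_1],\dots,[\bt_n]$ of the vertex classes of $(X,\E_v)$ and $[\bt'_1],\dots,[\bt'_n]$ of those of $(Y,\E'_v)$, labelled so that the two incidence matrices coincide as the \emph{same} matrix $B=[b_{ij}]$; in particular the roots $\vartheta_X,\vartheta_Y$ both sit in class $1$. I would first record the elementary but crucial fact that the class of a vertex determines the multiset of classes of its children: if $x\in[\bt_i]$ then $x$ has exactly $b_{ij}$ children in class $[\bt_j]$ for each $j$, and symmetrically on the $Y$ side. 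This is immediate from the definition of $B$ and from the observation that $\{x\}_\D\cong\{x'\}_\D$ forces the two child-subtrees to be matched by a bijection preserving subtree-isomorphism type, hence preserving classes.

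Next I would build $g$ recursively. Set $g(\vartheta_X)=\vartheta_Y$; both are in class $1$, so the base case is consistent. Assume $g$ has been defined on $X_0\cup\dots\cup X_k$ as a bijection onto $Y_0\cup\dots\cup Y_k$ that is level-preserving, edge-preserving (for $\E_v$ and $\E'_v$), and \emph{class-preserving} (i.e. $g(x)$ lies in the class with the same index as $x$). For each $x\in X_k$, say $x\in[\bt_i]$, its children in $X_{k+1}$ form, for each $j$, a set $C_j(x)$ of size $b_{ij}$ of vertices of class $[\bt_j]$; likewise $g(x)\in[\bt'_i]$ has a set $C'_j(g(x))$ of size $b_{ij}$ of class-$[\bt'_j]$ children. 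Pick any bijection $C_j(x)\to C'_j(g(x))$ for each $i,j$ and each $x$, and let $g$ on $X_{k+1}$ be the union of these. By construction $g|_{X_{k+1}}$ is a class-preserving bijection onto $Y_{k+1}$ sending $\E_v$-edges between levels $k$ and $k+1$ to $\E'_v$-edges; since a tree has no edges within a level, this extends the induction hypothesis. Taking the union over all $k$ yields a bijection $g:X\to Y$.

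It remains to check $g$ is a graph isomorphism, i.e. that $g$ and $g^{-1}$ preserve exactly the edge sets $\E_v$ and $\E'_v$. Every $\E_v$-edge joins a vertex to one of its children and by construction is sent to an $\E'_v$-edge; conversely, given an $\E'_v$-edge $(y,y')$ with $|y'|=|y|+1$, the vertex $y$ has a unique $g$-preimage $x$ and $y'$ is among $y$'s children, which are precisely the $g$-images of $x$'s children, so the preimage of $(y,y')$ is an $\E_v$-edge. Hence $g$ is the required isomorphism. I do not anticipate a serious obstacle here — the whole argument is a standard back-and-forth/greedy matching — the only point requiring care is making the class-labelling on the two sides genuinely compatible at the outset, so that "same matrix $B$" really does mean the $j$-th column on the $X$ side matches the $j$-th column on the $Y$ side; once that bookkeeping is fixed, the recursion goes through verbatim. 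One could also phrase the same proof via the hyperbolic-boundary/geodesic-ray description, but the direct inductive construction is cleaner and self-contained.
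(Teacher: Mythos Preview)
Your proposal is correct and follows essentially the same approach as the paper: build a level-preserving, class-preserving bijection using that the root lies in class $1$ and that the entries of $B$ (respectively $B^k$) control how many vertices of each class appear among the children (respectively at level $k$). Your version is in fact a bit more explicit than the paper's, since you take care to match children of $x$ with children of $g(x)$ class-by-class, which is exactly what is needed to guarantee edge preservation; the paper's proof states this more tersely.
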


\begin{proof}
Let $X=\bigcup_{k=0}^\infty X_k, Y=\bigcup_{k=0}^\infty Y_k$ where $X_0=\{\vartheta\}, Y_0=\{\vartheta'\}$ are their roots. Let $B=[b_{ij}]_{n\times n}$ be the incidence matrix associated to the equivalence classes of vertices $[\bt_1],\dots,[\bt_n]$ of $X$ (or the equivalence classes of vertices $[\bt_1'],\dots,[\bt_n']$ of $Y$ respectively). Write $B^k=[b^k_{ij}]_{n\times n}$ for $k\ge 1$. Since $\vartheta\in [\bt_1]$, by the definition of incidence matrix, $X_k$ consists of  $b^k_{1j}$ vertices belonging to $[\bt_j]$ where $1\le j\le n$. Similarly, $Y_k$ consists of  $b^k_{1j}$ vertices belonging to $[\bt_j']$ where $1\le j\le n$ as $\vartheta'\in [\bt_1']$. Hence we can define a bijective map $\sigma: X\to Y$ such that if $x\in X_k$, then $\sigma(x)\in Y_k$; and if $x\in [\bt_j]$ then $\sigma(x)\in [\bt_j']$. That $\sigma$ is the desired graph isomorphism.
\end{proof}

\begin{Def}
Let $X,Y$ be two connected graphs. We say  $\sigma: X  \to Y$ is a  near-isometry if it is a bijection and there exists $L>0$ such that for any $x,y\in X$,
$$
\big| d(\sigma(x),\sigma(y))-d(x,y)\big|<L.
$$
\end{Def}

\begin{Lem}[\cite{LaLu13}]
Let $X$, $Y$ be two hyperbolic graphs that are equipped with the visual metrics with the same parameter  $a$ (as in (\ref{eq2.1})). Suppose there exists a near-isometry  $\sigma : X \to Y$, then \ $\partial X \simeq \partial Y$.
\end{Lem}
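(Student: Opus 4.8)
The plan is to promote $\sigma$ to a map $\partial\sigma\colon\partial X\to\partial Y$ and show it is a bi-Lipschitz bijection; the engine of everything is the elementary observation that a near-isometry distorts Gromov products by at most a fixed additive amount. First I would bound $\bigl||x|-|\sigma(x)|\bigr|$: writing $\vartheta_X,\vartheta_Y$ for the two roots and $R:=d_Y(\vartheta_Y,\sigma(\vartheta_X))$, the near-isometry inequality gives $\bigl|d_Y(\sigma(\vartheta_X),\sigma(x))-|x|\bigr|<L$, while the triangle inequality gives $\bigl||\sigma(x)|-d_Y(\sigma(\vartheta_X),\sigma(x))\bigr|\le R$, so $\bigl||x|-|\sigma(x)|\bigr|<L+R$ for every $x\in X$. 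Substituting this and the near-isometry bound on $d(\sigma(x),\sigma(x'))$ into the identity $|u\wedge v|=\tfrac12(|u|+|v|-d(u,v))$ produces a constant $C_0$ (one may take $C_0=L+R+\tfrac12 L$) with
\[
\bigl|\,|x\wedge x'|-|\sigma(x)\wedge\sigma(x')|\,\bigr|\le C_0\qquad\text{for all }x,x'\in X,
\]
where on the left the Gromov products are computed in $X$ and in $Y$ respectively.

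Next I would extend $\sigma$ to the boundaries. Fix $\xi\in\partial X$ and a geodesic ray $\pi[x_1,x_2,\dots]$ converging to $\xi$; then $|x_n|\to\infty$ and $|x_m\wedge x_n|\to\infty$ as $m,n\to\infty$, which are standard properties of geodesic rays in a hyperbolic graph. By the base estimate, $|\sigma(x_n)|>|x_n|-(L+R)\to\infty$ and $|\sigma(x_m)\wedge\sigma(x_n)|>|x_m\wedge x_n|-C_0\to\infty$, so $(\sigma(x_n))$ is a Cauchy sequence for the visual metric of $Y$ whose limit lies in $\partial Y$; I define $\partial\sigma(\xi)$ to be this limit. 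If $\pi[x_1',x_2',\dots]$ is a second ray converging to $\xi$, then $d(x_n,x_n')\le c_0$ eventually by the ray-equivalence criterion recalled above, hence $d(\sigma(x_n),\sigma(x_n'))\le c_0+L$ and so $|\sigma(x_n)\wedge\sigma(x_n')|\to\infty$, which shows $\lim\sigma(x_n')=\partial\sigma(\xi)$; thus $\partial\sigma$ is well defined, and the same reasoning shows $\partial\sigma(\xi)$ may be computed along \emph{any} sequence of $X$ converging to $\xi$ in $\overline X$, not just geodesic rays. Invoking the extension of the Gromov product to $\overline X,\overline Y$ together with the fact that for $a_m\to\xi'$, $b_n\to\eta'$ in a hyperbolic graph one has $\bigl||\xi'\wedge\eta'|-\liminf_{m,n}|a_m\wedge b_n|\bigr|\le c\,\delta$ (see \cite{BH99,Wo00}), I would evaluate both sides of the base estimate along the rays to $\xi,\eta$ and along their $\sigma$-images to upgrade it to
\[
\bigl|\,|\xi\wedge\eta|-|\partial\sigma(\xi)\wedge\partial\sigma(\eta)|\,\bigr|\le C_1\qquad\text{for all }\xi,\eta\in\partial X,
\]
with $C_1$ depending only on $C_0$ and the hyperbolicity constants of $X$ and $Y$.

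Bijectivity is then automatic: $\sigma^{-1}\colon Y\to X$ is again a near-isometry with the same constant $L$, so the construction applied to it gives $\partial(\sigma^{-1})\colon\partial Y\to\partial X$; evaluating $\partial(\sigma^{-1})$ along the sequence $(\sigma(x_n))$ (permitted by the remark above) yields $\partial(\sigma^{-1})\bigl(\partial\sigma(\xi)\bigr)=\lim x_n=\xi$, and symmetrically $\partial\sigma\circ\partial(\sigma^{-1})=\mathrm{id}_{\partial Y}$. Finally, since the two boundaries carry visual metrics with the \emph{same} parameter $a$ and $\rho_a(u,v)$ is comparable to $\exp(-a|u\wedge v|)$ on each of them, the additive distortion $C_1$ of Gromov products turns into a multiplicative distortion: with $C=e^{aC_1}$ (absorbing the comparison constants) one gets $C^{-1}\rho_a(\xi,\eta)\le\rho_a\bigl(\partial\sigma(\xi),\partial\sigma(\eta)\bigr)\le C\,\rho_a(\xi,\eta)$, so $\partial\sigma$ is bi-Lipschitz and $\partial X\simeq\partial Y$.

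I expect the real work to sit in the boundary extension of the second paragraph. The interior estimate is pure arithmetic with the definition of a near-isometry, but passing to $\partial X$ requires some care with the $\delta$-hyperbolicity slack and, more essentially, with the fact that $\sigma$ need not send a geodesic ray to a geodesic ray; consequently convergence at the boundary and equality of limit points must be argued entirely through Gromov products rather than through geodesics. Everything else is routine bookkeeping with constants.
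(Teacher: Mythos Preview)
The paper does not supply a proof of this lemma; it is quoted from \cite{LaLu13} and used as a black box. So there is no ``paper's own proof'' to compare against here.

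Your argument is correct and is essentially the standard one: a near-isometry preserves Gromov products up to an additive constant, Gromov products extend to the boundary with only bounded $\delta$-slack, and an additive error in $|\cdot\wedge\cdot|$ becomes a multiplicative error in $\rho_a=\exp(-a|\cdot\wedge\cdot|)$ precisely because the same parameter $a$ is used on both sides. Your handling of the two genuine subtleties---that $\sigma(\vartheta_X)$ need not be the root of $Y$, and that $\sigma$ of a geodesic ray need not be a geodesic ray---is appropriate: you absorb the first into the constant $R=d_Y(\vartheta_Y,\sigma(\vartheta_X))$, and you argue convergence and well-definedness of $\partial\sigma$ purely through Gromov products rather than through any geodesic property of the image sequence. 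The bijectivity step via $\partial(\sigma^{-1})$ is clean once you have noted that $\partial\sigma(\xi)$ can be computed along any $\rho_a$-Cauchy sequence tending to $\xi$, not just along geodesic rays. Nothing is missing.
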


Now we state our main theorem as below.

\begin{theorem}\label{th-main-part1}
Suppose an augmented tree $(X, {\mathcal E})$ is simple, and suppose the incidence matrix $A$ is $(B,\bu)$-rearrangeable (where $B$ and $\bu$ are defined as in Remark \ref{B-u-remark}). Then there exists a near-isometry between the augmented tree $(X, {\mathcal E})$ and the tree $(X,{\mathcal E}_v)$, so that $\partial (X,\E) \simeq \partial (X,\E_v)$.
\end{theorem}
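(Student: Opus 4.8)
The plan is to construct a level-preserving bijection $\sigma\colon X\to X$ that is a near-isometry from the augmented tree $(X,\E)$ to the tree $(X,\E_v)$, and then to invoke the near-isometry lemma stated above (\cite{LaLu13}) to obtain $\partial(X,\E)\simeq\partial(X,\E_v)$. The first step is a geometric reduction. Since $(X,\E)$ is hyperbolic (\cite{LaWa16}), Theorem~\ref{th2.3} gives a constant $c$ bounding the length of every horizontal geodesic, so that, writing $\ell(x,y)$ for the largest $k$ with $x|_k$ and $y|_k$ in a common $X_k$-horizontal component, one checks from the canonical-geodesic identity (\ref{eq.cano.geo.identity}) that $0\le \ell(x,y)-|x\wedge y|_\E\le c/2$. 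As $d(\vartheta,\cdot)=|\cdot|$ in both graphs and $\sigma$ will preserve levels, it therefore suffices to build $\sigma$ so that
$$
\big|\,\ell(x,y)-|\sigma(x)\wedge\sigma(y)|_v\,\big|\le L_0\qquad(x,y\in X)
$$
for a uniform constant $L_0$, where $|\cdot\wedge\cdot|_v$ denotes the Gromov product in the tree $(X,\E_v)$; this yields $\big|d_\E(x,y)-d_{\E_v}(\sigma(x),\sigma(y))\big|\le 2L_0+c$. A structural observation keeps the target manageable: since $(X,\E)$ is simple, the labelled isomorphism type of $T_\D$ depends only on the class $[T_i]$ of $T$, so in particular $|T|=\|\bu_i\|_1\le M:=\max_i\|\bu_i\|_1$, i.e.\ all horizontal components have uniformly bounded size.

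Next I would build $\sigma$ by assigning to each horizontal component a \emph{cluster} inside $(X,\E_v)$. For $T\in[T_i]$ lying in level $\ell$, the cluster $C$ should be a set of $\|\bu_i\|_1$ vertices of $X$, all in level $\ell$, all lying in $\{w\}_\D$ for a single vertex $w$ of level $\ell-s_i$ with $s_i$ bounded, and carrying the same multiset of vertex-classes $[\bt_j]$ as $T$. The clusters assigned to the components of a fixed level should partition that level and --- crucially --- the assignment should be compatible with descent: the $a_{ij}$ offspring horizontal components of $T$ of class $[T_j]$ should be assigned exactly to a partition of the level-$(\ell+1)$ part of $C_\D$ into $a_{ij}$ clusters of type $[T_j]$. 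This is precisely the content one extracts from the hypothesis that $A$ is $(B,\bu)$-rearrangeable: by Definition~\ref{def rearrangeable}, the subtree structure of $(X,\E_v)$ (governed by $B$ and $\bu$, cf.\ Proposition~\ref{th.simple tree}) can be rearranged, level by level and coherently with iteration, so as to realize the offspring decomposition recorded by $A$. Granting this, $\sigma$ is defined recursively: identify the root component $\{\vartheta\}$ with the cluster $\{\vartheta\}$; having matched a component $T$ with a cluster $C$, match the offspring components of $T$ with the assigned sub-clusters of $C$ (using any bijections that respect the vertex-classes, extended to deeper levels by iterating the construction), and continue. Since at every level the clusters partition the level and within each $T_\D$ the offspring clusters partition the next level of $C_\D$, the resulting $\sigma$ is a well-defined level-preserving bijection of $X$.

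To verify the displayed bound, take $x,y\in X$, set $\ell=\ell(x,y)$, and let $T$ be the common level-$\ell$ horizontal component of $x|_\ell$ and $y|_\ell$, with cluster $C\subseteq\{w\}_\D$, $w$ at level $\ell-s_i$. If $x|_{\ell+1},y|_{\ell+1}$ are defined, they lie in distinct offspring components of $T$, so $\sigma(x),\sigma(y)$ lie in the descendant sets of distinct sub-clusters of $C$ sitting in level $\ell+1$; hence their confluence in $(X,\E_v)$ is at most $\ell$ and, since both sub-clusters lie in $\{w\}_\D$, at least $\ell-s_i$. The remaining case ($|x|=\ell$ or $|y|=\ell$) is similar, with $\sigma(x)$ or $\sigma(y)$ a vertex of $C$. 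In all cases $\ell-s_i\le|\sigma(x)\wedge\sigma(y)|_v\le\ell$, so the displayed bound holds with $L_0=\max_i s_i$. Combined with the first paragraph, $\sigma$ is a near-isometry, and the near-isometry lemma gives $\partial(X,\E)\simeq\partial(X,\E_v)$.

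The step I expect to be the main obstacle is exactly the coherent cluster assignment --- that is, converting the combinatorial statement ``$A$ is $(B,\bu)$-rearrangeable'' (Definition~\ref{def rearrangeable}) into an explicit level-by-level tiling of $(X,\E_v)$ by clusters that (i) respects vertex-classes, (ii) keeps each cluster inside a bounded-depth subtree, and (iii) is self-consistent under passing to offspring, so that it reproduces the decomposition encoded by $A$. Everything else is bookkeeping; in particular the additive errors do not accumulate along the recursion, because each is a purely local effect controlled by the finitely many data $c$, $M$, $B$, $\bu$, $A$, so the near-isometry constant may be taken to be the single bound $2L_0+c$.
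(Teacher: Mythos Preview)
Your approach is essentially the same as the paper's: a level-preserving bijection $\sigma$ built recursively so that each horizontal component maps to a cluster sitting under a single vertex of bounded depth, with the near-isometry estimate following from the canonical-geodesic identity and the uniform bound on horizontal geodesics. The paper resolves what you flag as the ``main obstacle'' in the most direct way: after first replacing $(A,B)$ by $(A^k,B^k)$ via Proposition~\ref{th-identity}(ii) so that every tree-vertex has at least $\max_i\#T_i$ children, one reads Definition~\ref{def rearrangeable} literally---the rows of the rearranging matrix $C$ give a partition of the offspring components of $T\in[T_i]$ into exactly $p=\|\bu_i\|_1$ groups, one per vertex of $\sigma(T)$, with the group assigned to a vertex of class $[\bt_j]$ having total vertex-type vector $\bb_j$, hence fitting bijectively and class-preservingly onto that vertex's children. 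Thus every cluster is a set of \emph{siblings} (your $s_i\equiv1$), and the near-isometry constant comes out as $c+2$.
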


The concept of {\it rearrangeable matrix} is a generalization of the one in \cite{LaLu13}, which is a crucial device to construct the near-isometry. Since the definition is a little complicated, we will introduce it in detail and prove the theorem in the next section.

Recall that a graph $(X,\E)$ is of {\it bounded degree} if $\max\{\text{deg}(x): x\in X\}<\infty$ where $\text{deg}(x)=\#\{y\in X: (x,y)\in{\mathcal E}\}$ is the total number of edges connecting $x$. Lau and Wang \cite{LaWa16} proved that {\it the  augmented tree $(X, {\mathcal E})$ is of bounded degree if and only if the IFS satisfies the OSC.}  If the augmented tree $(X,\E)$ is simple then it  is of bounded degree, hence the IFS satisfies the OSC.

However, for an overlapping IFS that does not satisfy the OSC, it may happen that $S_x=S_y$ for $x\ne y\in X$. Then the augmented tree may be no longer simple. In this situation, we need to modify the augmented tree by identifying $x,y$ for $|x|=|y|$ and $S_x=S_y$, and  define a quotient space $X^\sim$ of $X$ by the equivalence relation. Let $\E_v, \E_h$ be the sets of vertical edges and horizontal edges inherited from the original ones  and let $(X^\sim, \E)$ denote the quotient space with the induced graph. Moreover, it was also proved in  \cite{LaWa16} that {\it the graph $(X^\sim, {\mathcal E})$ is of bounded degree if and only if the IFS satisfies the weak separation condition (WSC)}.  The concept of the WSC was first introduced in \cite{LaNg99} to study multifractal measures. It is well-known that the WSC is strictly weaker than the OSC and the finite type condition \cite{NgWa01}. We refer to a survey paper \cite{DeLaNg13} for the detailed discussion on various separation conditions in fractal geometry.

Under the quotient space $(X^\sim, \E)$,  we can still define the equivalence classes of horizontal components and vertices and the concept of ``simple'' as above. But it should be more careful that we can not reduce the quotient space $(X^\sim, \E)$ into a tree by only removing the horizontal edges, that is, $(X^\sim, \E_v)$ will not be a tree. In that situation, one vertex of $X^\sim$ may have multi parents, that destroys the tree structure. However, we can further reduce the  graph $(X^\sim, \E_v)$ into a real tree $(X^\sim, \E^*_v)$ in the following way: for a vertex $y\in X^\sim$, let $x_1,\dots, x_n$ be all the parents of $y$ such that $(x_i, y)\in \E_v$ and $|y|=|x_i|+1$ where $i=1,\dots, n$.   Suppose $x_1<x_2<\cdots<x_n$ in the lexicographical order for the finite words, then we keep only the edge $(x_1, y)$ and remove all other vertical edges. We denote by $\E^*_v$ the set of remained edges. Obviously $(X^\sim, \E^*_v)$  indeed is a tree. Moreover, if the quotient space $(X^\sim, \E)$ is simple then the reduced tree $(X^\sim, \E^*_v)$ is also simple.

\begin{Exa}\label{example-quotienttree}
{\rm  Let $S_1(x)=\frac{1}{4}x, S_2(x)=\frac14(x+\frac34), S_3(x)=\frac14(x+\frac74), S_4(x)=\frac14(x+3)$ be an IFS. As $S_{14}=S_{21}$, the IFS satisfies the WSC, but not the OSC (see Figure \ref{quotienttree} (a)). Hence the augmented tree $(X, \E)$ is not simple. But we can consider the quotient space $(X^\sim, \E)$ by identifying vertices, then $x=\{14, 21\}$, as an equivalence class, is a vertex in $X^\sim$. Let $T_1=\{\vartheta\}, T_2=\{1,2,3\}$ and $T_3=\{24,31,32,33\}$ be the horizontal components of the quotient space $(X^\sim, \E)$. We  shall show that there are only three equivalence classes:  $[T_1], [T_2], [T_3]$ by Lemma 5.1 of \cite{LaLu13}, hence $(X^\sim, \E)$ is simple, and the incidence matrix is
$$A=\left[\begin{array}{rrr}
          1 &  1 & 0 \\
          1 & 2 & 1 \\
          1 & 2 & 2
    \end{array} \right].$$
Indeed, we can find out all the equivalence classes  in the first three iterations as follows: In the first iteration, $\{4\}\sim T_1$; in the second iteration, $\{34\}\sim T_1$ and $\{11,12,13\}, \{x,22,23\}, \{41,42, 43\}$ are all equivalent to $T_2$; in the third iteration, $\{334\}\sim T_1$, $\{241, 242, 243\}, \{321, 322, 323\}$ are equivalent to $T_2$, $\{244,311,312, 313\}$, $\{324, 331,332,333\}$ are equivalent to $T_3$.

On the other hand, there are two different paths $[\vartheta, 1, x]$ and $[\vartheta, 2, x]$ joining $\vartheta$ and $x$ in $X^\sim$, hence $(X^\sim, \E_v)$ is not a tree (see Figure \ref{quotienttree} (c)). However, we can reduce $(X^\sim, \E_v)$ into a tree $(X^\sim, \E^*_v)$ by using the method in the previous paragraph, for example removing the vertical edge $(2,x)$ as $1<2$ (see Figure \ref{quotienttree} (d)). It is not hard to check that the equivalence classes of vertices of $(X^\sim, \E^*_v)$ are $[\{\vartheta\}], [\{2\}]$. Therefore $(X^\sim, \E^*_v)$  is simple and the incidence matrix is
$$B=\left[\begin{array}{rr}
          3 &  1 \\
          2 & 1
    \end{array} \right].$$
}
\begin{figure} [h]
    \centering
     \subfigure[]{
    \includegraphics[width=6.5cm]{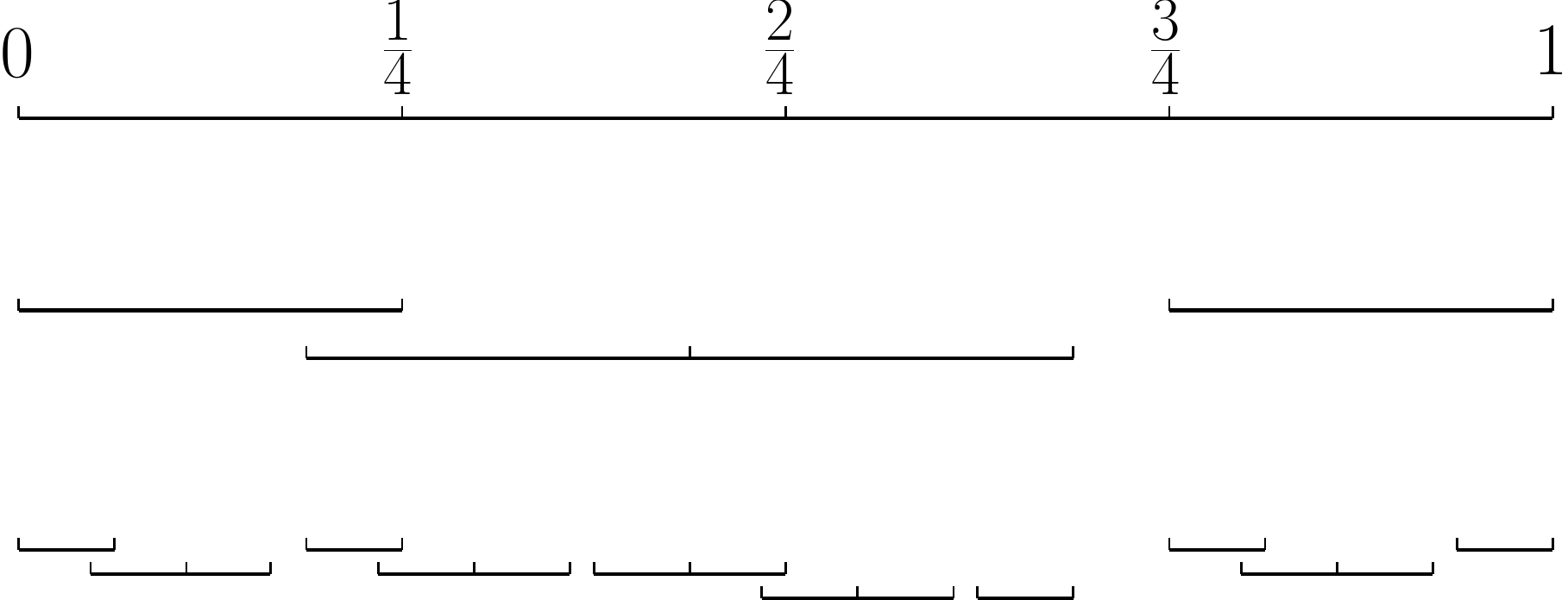}
    }\qquad
     \subfigure[]{
    \includegraphics[width=6.5cm]{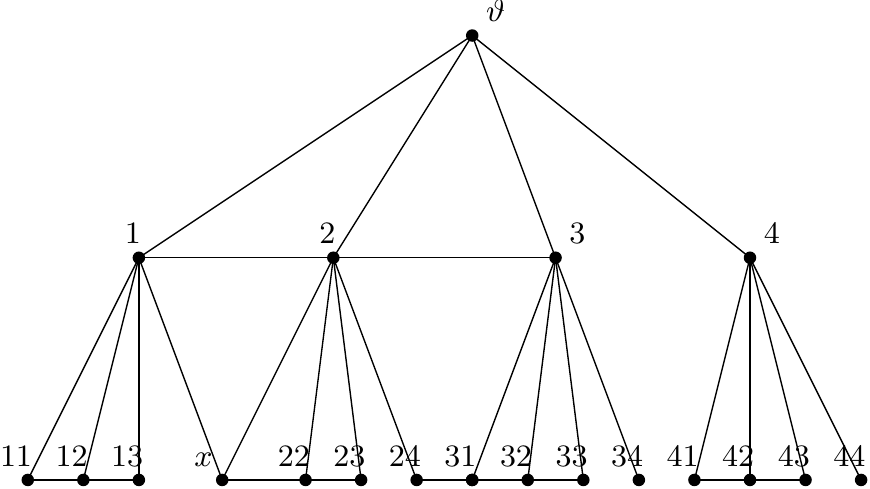}
    } \\
    \subfigure[]{
    \includegraphics[width=6.5cm]{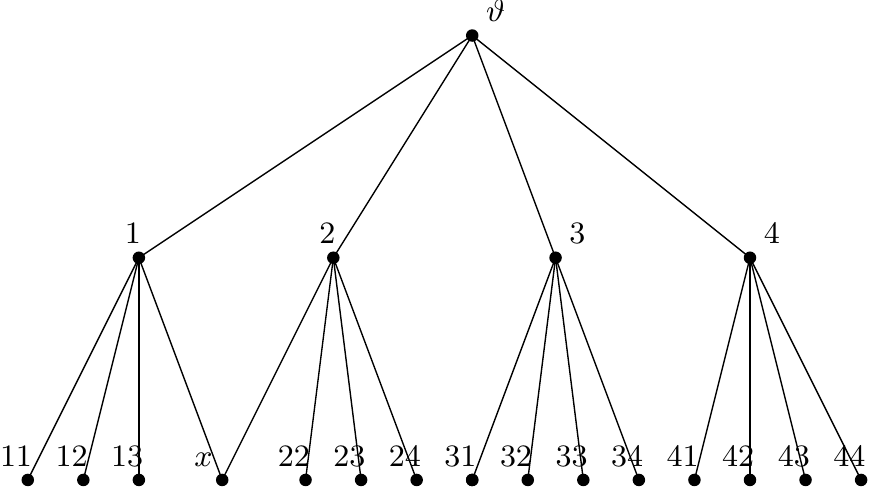}
    }\qquad
     \subfigure[]{
    \includegraphics[width=6.5cm]{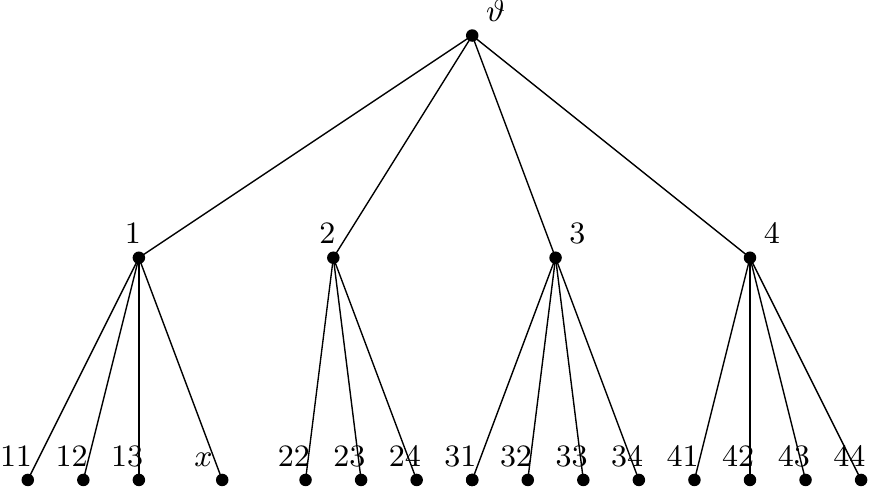}
    }
\caption{(a) the iterations of $[0,1]$; (b) the quotient space $(X^\sim, \E)$; (c) the reduced graph $(X^\sim, \E_v)$ by removing horizontal edges from $(X^\sim, \E)$; (d) the reduced tree $(X^\sim, \E_v^*)$ by modifying from $(X^\sim, \E_v)$.}\label{quotienttree}
\end{figure}
\end{Exa}

\begin{theorem}\label{th-quotient-tree}
Suppose the quotient space $(X^\sim,\E)$ is simple, and suppose the incidence matrix $A$ is $(B,\bu)$-rearrangeable (where $B$ and $\bu$ are defined as in Remark \ref{B-u-remark}). Then there exists a near-isometry between  $(X^\sim, {\mathcal E})$ and the reduced tree $(X^\sim, \E^*_v)$, so that $\partial (X^\sim,\E) \simeq \partial (X^\sim,\E^*_v)$.
\end{theorem}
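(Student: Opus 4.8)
The plan is to run the construction behind Theorem~\ref{th-main-part1}, with the reduced tree $(X^\sim,\E_v^*)$ playing the role that $(X,\E_v)$ plays there, after checking that the three structural ingredients of that argument survive the passage to the quotient. First, $(X^\sim,\E)$ is hyperbolic: this is part of the Lau--Wang theory \cite{LaWa16}, and it may also be obtained directly, since simplicity of $(X^\sim,\E)$ forces the sizes of the horizontal components --- hence the lengths of all horizontal geodesics --- to be bounded by some $M$, and the proof of the criterion in Theorem~\ref{th2.3} adapts to the quotient once one reads ``$x^{-1}$'' as ``some parent of $x$''. The reduced tree $(X^\sim,\E_v^*)$ is a tree, hence hyperbolic with $\delta=0$. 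Choosing the visual parameter $a$ small enough to serve both graphs, the desired conclusion $\partial(X^\sim,\E)\simeq\partial(X^\sim,\E_v^*)$ will follow from the lemma of \cite{LaLu13} on near-isometries as soon as we exhibit a near-isometry $\sigma\colon(X^\sim,\E)\to(X^\sim,\E_v^*)$.

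Next I would set up the bookkeeping. Both graphs carry the same level function $|\cdot|$: the edges of $\E_v^*$ join consecutive levels, and in $(X^\sim,\E)$ the distance to the root is again the level, since horizontal edges preserve levels while the ancestor path of any representative projects to a vertical path of the right length. By hypothesis $(X^\sim,\E_v^*)$ is a simple tree; let $B$ be its incidence matrix and $[\bt_1],\dots,[\bt_p]$ its vertex classes, let $[T_1],\dots,[T_m]$ be the horizontal-component classes of $(X^\sim,\E)$, and let $\bu=[\bu_1,\dots,\bu_m]$ be the representation of the $[T_i]$ by the $[\bt_j]$ as in Remark~\ref{B-u-remark}; by Proposition~\ref{th.simple tree} the tree $(X^\sim,\E_v^*)$ is determined up to isomorphism by $B$, and I take the resulting abstract tree as the concrete target. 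Since horizontal components have size $\le M$, the canonical-geodesic identity \eqref{eq.cano.geo.identity} (which holds equally for $(X^\sim,\E)$) gives $\bigl|\,|x\wedge y|_\E-l\,\bigr|\le M/2$, where $l$ is the highest level at which $x|_l$ and $y|_l$ lie in a common horizontal component; in the tree, $|x\wedge y|_{\E_v^*}$ equals the level of the last common ancestor exactly. As $\sigma$ is to preserve levels, $\bigl|d_{\E_v^*}(\sigma x,\sigma y)-d_\E(x,y)\bigr|=2\bigl|\,|\sigma x\wedge\sigma y|_{\E_v^*}-|x\wedge y|_\E\,\bigr|$, so it suffices to construct a level-preserving bijection $\sigma$ for which the relations ``$x|_l,y|_l$ lie in a common $X^\sim_l$-horizontal component'' and ``$\sigma(x),\sigma(y)$ have a common ancestor at level $l$'' agree up to an error bounded uniformly in $x,y,l$.

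The heart of the matter is precisely this combinatorial matching, which is what the $(B,\bu)$-rearrangeability of $A$ is designed to provide. Following Section~3, I would build $\sigma$ by induction on the levels: given the bijection at level $n$, the offspring horizontal components of a level-$n$ component of class $[T_i]$ are counted by the $i$-th row of $A$, the subtrees of $(X^\sim,\E_v^*)$ rooted at the matched level-$n$ vertices are governed by $B$, and $\bu$ converts components into vertices class by class. The rearrangeability hypothesis is exactly the assertion that one can enumerate these new components and parcel out the level-$(n+1)$ vertices among them so that the ancestor relation in $(X^\sim,\E_v^*)$ tracks the ``same horizontal component'' relation in $(X^\sim,\E)$ with a discrepancy that, there being only finitely many classes, stays bounded by a constant independent of $n$. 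Passing to the limit yields the level-preserving bijection $\sigma$, hence the near-isometry, and with it the Lipschitz equivalence of the boundaries.

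The step I expect to be the main obstacle is securing the \emph{uniformity} of this matching in the quotient setting. The failure of $(X^\sim,\E_v)$ to be a tree is what forces the ad hoc passage to $\E_v^*$ (keeping only the lexicographically least parent of each vertex), and one must check that this reduction preserves simplicity, that the data $B$ and $\bu$ feeding the rearrangeability hypothesis are the ones computed through $\E_v^*$, and that ``offspring of a horizontal component'' is unambiguous once read off from $\E_v^*$ --- only then is the rearrangeability machinery of Theorem~\ref{th-main-part1} literally applicable. A secondary point requiring care is that the vertex identifications defining $X^\sim$ do not create horizontal adjacencies that falsify the offspring counts $a_{ij}$; this is subsumed in the hypothesis that $(X^\sim,\E)$ is simple, but the well-definedness of $A$ on equivalence classes should be confirmed before the induction is begun.
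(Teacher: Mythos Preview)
Your proposal is correct in outline and would work, but the paper takes a shorter route that neatly dissolves precisely the ``main obstacle'' you flag. Rather than running the inductive construction of Theorem~\ref{th-main-part1} directly on the pair $\bigl((X^\sim,\E),(X^\sim,\E_v^*)\bigr)$ and having to verify at each step that the quotient structure does not disturb the offspring counts, the paper interposes the intermediate graph $Y_2:=(X^\sim,\E_v^*\cup\E_h)$ and factors the near-isometry as $(X^\sim,\E)\to Y_2\to(X^\sim,\E_v^*)$. The first map is the identity: since any two $\E_v$-parents of a vertex $y\in X^\sim$ satisfy $\mathrm{dist}(S_x(K),S_{x'}(K))=0$ and hence are joined by an $\E_h$-edge, deleting the redundant vertical edges in passing from $\E_v$ to $\E_v^*$ changes distances by at most a fixed constant, so the identity is trivially a near-isometry. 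The point of the second step is that $Y_2$ is now a genuine augmented tree over the honest tree $(X^\sim,\E_v^*)$, so the proof of Theorem~\ref{th-main-part1} applies verbatim, with no need to renegotiate the meaning of ``ancestor'' or ``offspring of a horizontal component''.

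What your approach buys is a single bijection constructed in one pass; what the paper's factorization buys is that all the delicate checks you list in your last paragraph --- unambiguous offspring, preservation of simplicity under $\E_v\mapsto\E_v^*$, compatibility of $B$ and $\bu$ with the reduced tree --- become either vacuous (they are built into $Y_2$) or absorbed into the trivial first step. Both arguments are valid; the paper's is shorter and makes clearer why the quotient causes no real trouble.
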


\begin{proof}
Let $Y_1=(X^\sim, \E_v\cup \E_h), Y_2=(X^\sim, \E^*_v\cup \E_h)$ and $Y_3=(X^\sim, \E^*_v)$. Trivially, the identity map between $Y_1, Y_2$ is a near-isometry. The same proof of Theorem \ref{th-main} can be applied to build a near-isometry from $Y_2$ to $Y_3$. By composing the two near-isometries, we obtain the desired one.
\end{proof}

\section{Rearrangeable matrix and proof of Theorem \ref{th-main}}
In order to define a desired map from an augmented tree to the original tree, we need a combinatorial device to rearrange the vertices and edges of the graphs in a proper way. In a series of papers (\cite{LaLu13}\cite{DLL15}\cite{L13}), the author developed a concept of rearrangeable matrix to handle this case. In the present paper, we generalize the definition of rearrangeable matrix so that it helps to solve more general problems.

For a vector $\bu=[\bu_1,\dots, \bu_m]\in {\mathbb R}^{nm}$ with blocks $\bu_i=[u_{i1},\dots, u_{in}]\in {\mathbb R}^n$, we denote by $\bu^t=\left[\begin{array}{c}
          {\mathbf u}_1  \\
          \vdots \\
          {\mathbf u}_m
    \end{array} \right]$. We also let $\b 1=[1,\dots, 1]$ for convenience such that the involved matrix product is well-defined.

\begin{Def}\label{def rearrangeable}
Let $B$ be an $n\times n$ nonnegative integer matrix, let $\bu=[\bu_1,\dots, \bu_m]$ where $\bu_i=[u_{i1},\dots, u_{in}]\in {\mathbb Z}^n_+$. An $m\times m$ matrix $A$ is said to be  $(B,\bu)$-rearrangeable if for each row vector $\ba_i$ of $A$, there exists a nonnegative  matrix $C=[c_{ij}]_{p\times m}$ (rearranging matrix) where $p=\sum_{j=1}^nu_{ij}$ such that
\begin{equation*}
\ba_i=\b1 C, \quad  C\bu^t =
\left[\begin{array}{c}
          \sum_{j=1}^mc_{1j}{\mathbf u}_j  \\
          \vdots \\
          \sum_{j=1}^mc_{pj}{\mathbf u}_j
    \end{array} \right]:=\left[\begin{array}{c}
          {\mathbf c}_1  \\
          \vdots \\
          {\mathbf c}_p
    \end{array} \right]
\end{equation*} and $\#\{\bc_k: \bc_k=\bb_j, k=1,\dots, p\}=u_{ij}$, where $\bb_j$ is the $j$-th row vector of $B$.
\end{Def}

Note that when the matrix $B$ degenerates to an integer, the above definition coincides exactly with the original one in \cite{LaLu13} (or \cite{DLL15}).

\begin{Exa}\label{exa1}
Let $\bu=[\bu_1,\bu_2,\bu_3]$ where $\bu_1=[1,0], \bu_2=[2,1],\bu_3=[3,1]$. Let $$A=\left[\begin{array}{ccc}
          1 & 1 & 0\\
          1 & 2 & 1 \\
          1 & 2 & 2
    \end{array} \right],\quad B=\left[\begin{array}{cc}
          3 & 1\\
          2 & 1
    \end{array} \right].$$ Then $A$ is $(B,\bu)$-rearrangeable; $A^2$ is $(B^2,\bu)$-rearrangeable.
\end{Exa}

\begin{proof}
In fact, let $\ba_i$ be the $i$-th row vector of $A$ and $\bb_i$ the $i$-th row vector of $B$. For $\ba_1$, there exists a $1\times 3$ matrix $C=[1,1,0]$ such that $\ba_1= C$ and
$$C\bu^t=[1,1,0]\left[\begin{array}{c}
         \bu_1 \\
         \bu_2 \\
         \bu_3
    \end{array} \right]=\bu_1+\bu_2=\bb_1$$ which is the first row vector of $B$.

For $\ba_2$, there exists a $3\times 3$ matrix $C=\left[\begin{array}{ccc}
          1 & 1 & 0\\
          0 & 1 & 0 \\
          0 & 0 & 1
    \end{array} \right]$ such that $\ba_2= \b1 C$ and
$$C\bu^t=\left[\begin{array}{ccc}
          1 & 1 & 0\\
          0 & 1 & 0 \\
          0 & 0 & 1
    \end{array} \right]\left[\begin{array}{c}
         \bu_1 \\
         \bu_2 \\
         \bu_3
    \end{array} \right]=\left[\begin{array}{c}
         \bu_1+\bu_2 \\
         \bu_2 \\
         \bu_3
    \end{array} \right]=\left[\begin{array}{c}
         \bb_1 \\
         \bb_2 \\
         \bb_1
    \end{array} \right].$$

For $\ba_3$, there exists a $4\times 3$ matrix $C=\left[\begin{array}{ccc}
          1 & 1 & 0\\
          0 & 1 & 0 \\
          0 & 0 & 1 \\
          0 & 0 & 1
    \end{array} \right]$ such that $\ba_3= \b1 C$ and
$$C\bu^t=\left[\begin{array}{ccc}
          1 & 1 & 0\\
          0 & 1 & 0 \\
          0 & 0 & 1 \\
          0 & 0 & 1
    \end{array} \right]\left[\begin{array}{c}
         \bu_1 \\
         \bu_2 \\
         \bu_3
    \end{array} \right]=\left[\begin{array}{c}
         \bu_1+\bu_2 \\
         \bu_2 \\
         \bu_3 \\
         \bu_3
    \end{array} \right]=\left[\begin{array}{c}
         \bb_1 \\
         \bb_2 \\
         \bb_1 \\
         \bb_1
    \end{array} \right].$$

For $A^2=\left[\begin{array}{ccc}
          2 & 3 & 1\\
          4 & 7 & 4 \\
          5 & 9 & 6
    \end{array} \right], B^2=\left[\begin{array}{cc}
          11 & 4\\
          8 & 3
    \end{array} \right]$, we can use the same argument as above to show that $A^2$ is $(B^2,\bu)$-rearrangeable. The corresponding matrices $C$ for each row vector of $A^2$ are listed by $$\left[\begin{array}{ccc} 2 & 3 & 1 \end{array} \right], \quad \left[\begin{array}{ccc}
          3 & 4 & 0\\
          1 & 2 & 1 \\
          0 & 1 & 3
    \end{array} \right], \quad \left[\begin{array}{ccc}
         3 & 4 & 0\\
         0 & 1 & 3 \\
         0 & 1 & 3 \\
         2 & 3 & 0
    \end{array} \right].$$
\end{proof}

\begin{Exa}\label{exa-addition}
Let $\bu=[\bu_1, \dots, \bu_5]$ where $\bu_1=[1,0], \bu_2=[2,0], \bu_3=[0,2], \bu_4=[3,0], \bu_5=[1,2]$. Let $$A=\left[\begin{array}{rrrrr}
          1 &  1 & 1 & 0 & 0 \\
          1 &  1 & 2 & 1 & 0 \\
          1 &  0 & 1 & 0 & 1 \\
          1 &  1 & 3 & 2 & 0 \\
          1 &  1 & 1 & 0 & 2
    \end{array} \right],\quad B=\left[\begin{array}{cc}
          3 & 2\\
          1 & 2
    \end{array} \right].$$ Then $A$ is $(B,\bu)$-rearrangeable.
\end{Exa}

\begin{proof}
Analogous to Example \ref{exa1},  we can check the definition by letting the corresponding rearranging matrices $C$ for each row vector of $A$ as follows:
\begin{align*}\left[\begin{array}{ccccc} 1 & 1 & 1 & 0 & 0 \end{array}\right],
\left[\begin{array}{ccccc}
          1 & 1 & 1 & 0 & 0\\
          0 & 0 & 1 & 1 & 0
    \end{array} \right], \left[\begin{array}{ccccc}
          1 & 0 & 1 & 0 & 0\\
          0 & 0 & 0 & 0 & 1
    \end{array} \right],
\end{align*}
\begin{align*}
     \left[\begin{array}{ccccc}
          1 & 1 & 1 & 0 & 0\\
          0 & 0 & 1 & 1 & 0 \\
          0 & 0 & 1 & 1 & 0
    \end{array} \right],    \left[\begin{array}{ccccc}
          1 & 1 & 1 & 0 & 0\\
          0 & 0 & 0 & 0 & 1 \\
          0 & 0 & 0 & 0 & 1
    \end{array} \right].
\end{align*}
\end{proof}

\begin{Prop}\label{th-identity}
Let $A$ be $(B,\bu)$-rearrangeable as in Definition \ref{def rearrangeable}, then we have

(i) $A\bu^t=\bu^tB$;

(ii)  $A^k$ is $(B^k,\bu)$-rearrangeable for any $k>0$.
\end{Prop}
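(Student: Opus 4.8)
The plan is to prove (i) directly from the defining identities of a $(B,\bu)$-rearrangeable matrix, and then to deduce (ii) by induction on $k$, with (i) serving as the bridge.

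For (i), I would argue one row at a time. Fix $i$ and let $C=[c_{kj}]_{p\times m}$, $p=\sum_{j=1}^n u_{ij}$, be a rearranging matrix for the row $\ba_i$, so that $\ba_i=\b1 C$ and the rows $\bc_k=\sum_{j=1}^m c_{kj}\bu_j$ of $C\bu^t$ form, as a multiset, exactly $u_{ij}$ copies of $\bb_j$ for $j=1,\dots,n$. Then
$$
\ba_i\bu^t=(\b1 C)\bu^t=\b1(C\bu^t)=\sum_{k=1}^p\bc_k=\sum_{j=1}^n u_{ij}\bb_j=\bu_i B,
$$
where the last equality merely unpacks the matrix product $\bu_iB$. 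Since $\ba_i\bu^t$ is the $i$-th row of $A\bu^t$ and $\bu_iB$ is the $i$-th row of $\bu^t B$, letting $i$ range over $1,\dots,m$ gives $A\bu^t=\bu^t B$. A short induction on $k$ — peeling off one factor via $A^{k}\bu^t=A^{k-1}(A\bu^t)=A^{k-1}\bu^t B=\dots=\bu^t B^k$ — then upgrades this to $A^k\bu^t=\bu^t B^k$ for every $k\ge1$, which is exactly what part (ii) will consume.

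For (ii), I would induct on $k$, the case $k=1$ being the hypothesis. Assume $A^k$ is $(B^k,\bu)$-rearrangeable. Given a row index $i$, take the rearranging matrix $C=C^{(i)}$ for the $i$-th row of $A$ and set $E:=CA^k$. This is a nonnegative $p\times m$ matrix with $p=\sum_j u_{ij}$ — the correct size, since $p$ depends only on $\bu_i$ and not on $k$ — and
$$
\b1 E=\b1 C A^k=\ba_i A^k=(\text{the $i$-th row of }A^{k+1}).
$$
For the second requirement, compute $E\bu^t=C(A^k\bu^t)=C\bu^t B^k$ using the identity obtained above. The rows of $C\bu^t$ are, with multiplicities, $u_{ij}$ copies of $\bb_j$; hence the rows of $C\bu^t B^k$ are $u_{ij}$ copies of $\bb_j B^k$, and $\bb_j B^k$ is precisely the $j$-th row of $B^{k+1}$. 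Thus $E$ exhibits $A^{k+1}$ as $(B^{k+1},\bu)$-rearrangeable, closing the induction.

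The computations are light; the one point needing care is the multiset condition ``$\#\{\bc_k:\bc_k=\bb_j\}=u_{ij}$''. I would carry along an explicit assignment $k\mapsto j(k)$ realizing the $u_{ij}$ copies of $\bb_j$ among the rows of $C\bu^t$, and observe that the \emph{same} assignment works for the rows of $C\bu^t B^k$ relative to the rows of $B^{k+1}$: from $\bc_k=\bb_{j(k)}$ we get $\bc_k B^k=\bb_{j(k)}B^k$, so any numerical coincidences among distinct rows of $B^{k+1}$ cause no trouble. This bookkeeping is the only genuine subtlety; the substantive step is simply recognizing that $CA^k$ is the right rearranging matrix, and the identity $A^k\bu^t=\bu^t B^k$ from part (i) is tailor-made to make that choice go through.
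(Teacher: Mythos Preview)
Your proof is correct. Part (i) is identical to the paper's argument. For part (ii) the paper also argues by induction but in the mirror direction: it takes the rearranging matrix $C$ furnished by the inductive hypothesis for the $i$-th row of $A^k$, sets $\tilde C=CA$, and uses $A\bu^t=\bu^t B$ from (i) to get $\tilde C\bu^t=C\bu^t B$. You instead keep the rearranging matrix $C$ for the $i$-th row of $A$ itself, set $E=CA^k$, and use the iterated identity $A^k\bu^t=\bu^t B^k$. Your version therefore never really consumes the inductive hypothesis on (ii) --- once the iterated form of (i) is in hand, your argument is effectively a direct proof that $A^{k+1}$ is $(B^{k+1},\bu)$-rearrangeable for every $k\ge0$. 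This is a mild simplification; the paper's route has the small conceptual advantage that it only ever appeals to the base identity $A\bu^t=\bu^t B$. Your remark about carrying the assignment $k\mapsto j(k)$ to handle possible coincidences among the rows of $B^{k+1}$ is a point the paper leaves implicit.
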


\begin{proof}
(i) Following the notation of Definition \ref{def rearrangeable}, for each row vector $\ba_i$ of $A$, then
$$\ba_i\bu^t=\b1 C \bu^t=\sum_{j=1}^p\bc_j=\sum_{j=1}^n u_{ij}\bb_j=\bu_i B.$$

We prove (ii) by induction. Suppose $A^k$ is $(B^k,\bu)$-rearrangeable for some $k\ge 1$. For $k+1$, we let ${\boldsymbol \alpha}_i$ be the $i$-th row vector of $A^k$ and ${\boldsymbol \beta}_j$ the $j$-th row vector of $B^k$. Then there exists a $p\times m$ matrix $C$ where $p=\sum_{j=1}^n u_{ij}$ such that  ${\boldsymbol \alpha}_i=\b1 C$ and $$C\bu^t=\big[\underbrace{{\boldsymbol \beta}_1,\dots,{\boldsymbol \beta}_1}_{u_{i1}},\dots, \underbrace{{\boldsymbol \beta}_n,\dots,{\boldsymbol \beta}_n}_{u_{in}}\big]^t,
$$
where the transpose means transposing the row of matrices into a column of matrices (without transposing the ${\boldsymbol\beta}_j$ itself).

Since  $$A^{k+1}=A^k A=\big[{\boldsymbol \alpha}_1 A, \dots, {\boldsymbol \alpha}_m A\big]^t\quad\text{and}\quad B^{k+1}=B^k B=\big[{\boldsymbol \beta}_1 B, \dots, {\boldsymbol \beta}_n B\big]^t.$$ It follows that ${\boldsymbol \alpha}_iA=\b1 CA$. Letting $\tilde{C}=CA$, then by (i), we have
$$\tilde{C}\bu^t=CA\bu^t=C\bu^t B=\big[\underbrace{{\boldsymbol \beta}_1B,\dots,{\boldsymbol \beta}_1B}_{u_{i1}},\dots, \underbrace{{\boldsymbol \beta}_nB,\dots,{\boldsymbol \beta}_nB}_{u_{in}}\big]^t.$$ Therefore, $A^{k+1}$ is $(B^{k+1},\bu)$-rearrangeable by Definition \ref{def rearrangeable}.
\end{proof}

\noindent {\bf Proof of Theorem  \ref{th-main}:}
By the assumption, let $\{[T_i]\}_{i=1}^m$ be the equivalence classes of horizontal components in $(X,\E)$, and let $\{[\bt_j]\}_{j=1}^n$ be the equivalence classes of vertices in $(X, \E_v)$. Without loss of generality, we can assume that
\begin{equation}\label{eq.WLOG}
\max_{1\le i\le m}\#T_i\le \min_{1\le i\le n}\sum_{j=1}^n b_{ij}.
\end{equation}
For otherwise, by the definition of incidence matrix $B$ of $(X,\E_v)$, each row sum and each column sum of $B$ must be positive. Hence every row sum of $B^k$ will go to infinity as $k$ does. Denote by $b_{ij}^k$ the entries of $B^k$, and let $k$ be sufficiently large such that
$$\max_{1\le i\le m}\#T_i\le \min_{1\le i\le n}\sum_{j=1}^n b^k_{ij}.$$
By Proposition \ref{th-identity}, $A^k$ is $(B^k,\bu)$-rearrangeable. The IFS of the $k$-th iteration of $\{S_i\}_{i=1}^m$ has symbolic space $X'=\bigcup_{n=0}^\infty X_{kn}$ and the augmented tree has incidence matrix $A^k$; moreover, the two hyperbolic boundaries $\partial X'$ and $\partial X$ are identical. Hence we can consider $A^k$ instead if \eqref{eq.WLOG} is not satisfied.

Let $X=(X,{\mathcal E}), Y=(X,{\mathcal E}_v)$.  We define $\sigma: X\to Y$ to be a one-to-one map  from $X_n$ (in $X$) to $X_n$ (in $Y$) inductively as follows: Let
$$
\sigma(\vartheta)=\vartheta \quad \hbox {and} \quad \sigma(x)=x, \quad x\in X_1.
$$
Suppose  $\sigma$ is defined on $X_n$ such that

(1) for every horizontal component $T$, $\sigma (T)$ have the same parent (using \eqref{eq.WLOG}), i.e.,
\begin{equation*}
\sigma (x)^{-1} = \sigma (y)^{-1} \qquad  \forall \ x, y \in T;
\end{equation*}

(2) $x\sim\sigma(x)$, i.e., $x$ and  $\sigma(x)$ lie in the same class $[\bt_j]$ for some $j$.

In order to define the map $\sigma$ on  $X_{n+1}$, we note that  $T$ in $X_n$ gives rise  to horizontal components in $X_{n+1}$, which are accounted by the incidence matrix $A$. We write the descendants of $T$ as $\bigcup_{k=1}^\ell Z_k$ where $Z_k$ are horizontal components consisting of $X_{n+1}$-descendants of $T$. If $T$ belongs to some equivalence class  $[T_i]$ (corresponding to $\bu_i=[u_{i1},\dots, u_{in}]$),  then $T$ consists of $u_{ij}$ vertices belonging to the class $[\bt_j]$ where $j=1,\dots, n$. The total number of vertices of $T$ is $\# T=\sum_{j=1}^nu_{ij}:=p$. By the definition of incidence matrix $A$, the total number of the $X_{n+1}$-descendants of $T$ is $$\sum_{k=1}^\ell \#Z_k=(\ba_i\bu^t)\cdot\b1=\sum_{j=1}^m\sum_{k=1}^na_{ij}u_{jk}.$$

Since  $A$ is $(B,\bu)$-rearrangeable, there exists a nonnegative  matrix $C=[c_{ij}]_{p\times m}$ such that
\begin{equation*}
\ba_i=\b1 C, \quad  C\bu^t =
\left[\begin{array}{c}
          \sum_{j=1}^mc_{1j}{\mathbf u}_j  \\
          \vdots \\
          \sum_{j=1}^mc_{pj}{\mathbf u}_j
    \end{array} \right]:=\left[\begin{array}{c}
          {\mathbf c}_1  \\
          \vdots \\
          {\mathbf c}_p
    \end{array} \right].
\end{equation*}
According to the matrix $C$, we decompose $\ba_i$ into $p$ groups as follows: for each $1\le s\le p$ and $1\le j\le m$, we choose $c_{sj}$ of those $Z_k$ that belong to $[T_j]$, and denote by $\Lambda_s$ the set of all the chosen $k$ with $1\le j\le m$. Then the index set $\{1,2,\dots, \ell\}$ can be written as a disjoint union
$$\{1,2, \dots, \ell\}=\bigcup_{s=1}^p\Lambda_s.$$
Hence  $\bigcup_{k=1}^\ell Z_k$ can be rearranged as $p$ groups $\Lambda_s$, and each $\Lambda_s$ corresponds to $\bc_s$. Namely,
\begin{equation*}
\bigcup_{k=1}^{\ell}Z_k=\bigcup_{k\in \Lambda_1}Z_{k}\ \cup \
\cdots \cup \ \bigcup_{k\in \Lambda_{p}}Z_{k}.
\end{equation*}

\begin{figure}[h]
  \centering
  \includegraphics[width=14cm]{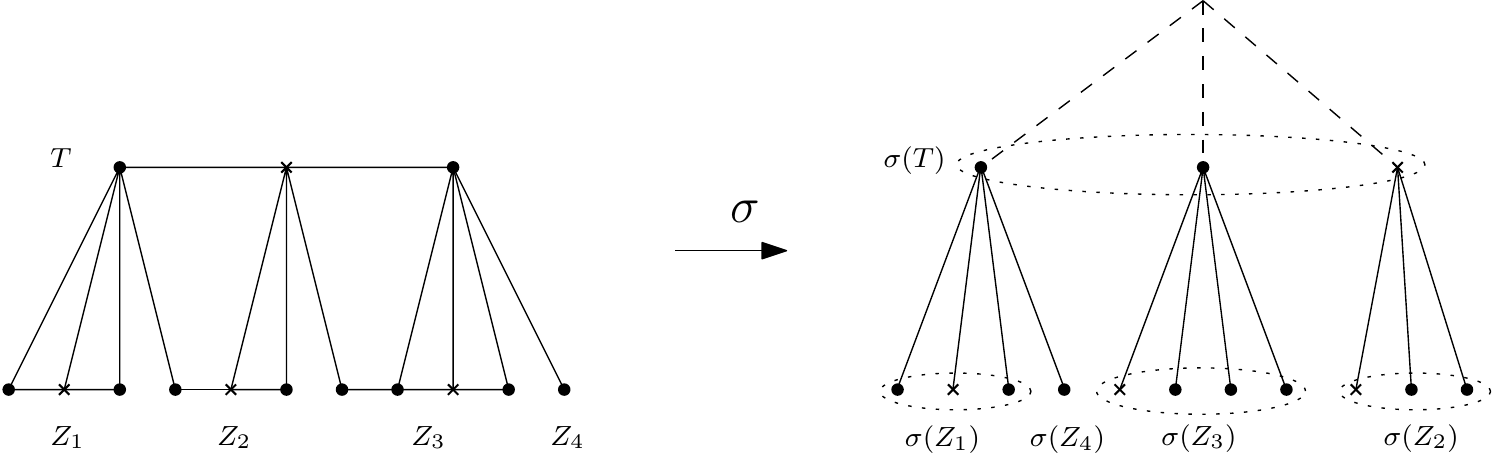}
\caption{An illustration of a rearrangement by $\sigma$ with $\ell=4, p=3$, the {\tiny $\bullet , \ \times$} denote two classes of vertices.}\label{fig.re}
\end{figure}

Suppose the component  $T = \{x_1,  \dots, x_p\} (\subset X_n)$, we have defined $\sigma$ on $X_n$ and $\sigma (T) = \{y_1 = \sigma (x_1), \ \dots ,\ y_p = \sigma (x_p)\}$ such that $x_i, y_i$ are equivalent. As $T\in[T_i]$, by the previous argument, among $x_i's$  (or $y_i's$), there are $u_{ij}$ vertices belonging to the class $[\bt_j]$, $j=1,\dots, n$.  Since $\#\{\bc_s: \bc_s=\bb_j, s=1,\dots, p\}=u_{ij}$, for $\bc_s=\bb_j$, we have
$$\sum_{k\in \Lambda_s}\#Z_{k}=\bb_j\cdot\b1=\sum_{k=1}^nb_{jk}$$  which is equal to the total number of descendants of a vertex lying in $[\bt_j]$ (due to the incidence matrix $B$). We can define a one-to-one map $\sigma$ on $\bigcup_{k=1}^{\ell}Z_k$ by assigning ${\bigcup}_{k\in\Lambda_s}Z_{k}$ to  the descendants of $y_i$ which belongs to $[\bt_j]$. Moreover, $\bc_s=\bb_j$ also means that the number of vertices belonging to $[\bt_k]$ in  ${\bigcup}_{k\in\Lambda_s}Z_{k}$ is the same as the number of descendants of the $y_i$ which belong to $[\bt_k]$ for $k=1,\dots, n$. Hence we can require the $x$ and its image $\sigma(x)$ are equivalent (see Figure \ref{fig.re}).  Therefore, $\sigma$ is well-defined on the descendants of $T$ and satisfies conditions (1)(2). By applying the same construction of $\sigma$ on the descendants of every horizontal component in $X_n$. It follows that $\sigma$ is well-defined on $X_{n+1}$ as desired. Inductively, $\sigma$ can be defined from $X$ to $Y$ bijectively.

The proof that $\sigma$ is a near-isometry is the same as in \cite{LaLu13}. Let $x,y\in X$, $\pi(x,y)$  be the canonical geodesic connecting them, which can be written as
$$\pi(x,y)=[x,x^{-1},\dots, x^{-n},z_1,\dots, z_k, y^{-m},\dots, y^{-1}, y]
$$
where $[x^{-n},z_1,\dots, z_k,y^{-m}]$ is the horizontal part and $[x,x^{-1},\dots, x^{-n}], [y^{-m},\dots, y^{-1}, y]$ are vertical parts. Clearly, $[x^{-n},z_1,\dots, z_k,y^{-m}]$ must be included in a horizontal component of $X$, we denote it by $T$.  By \eqref{eq.cano.geo.identity}, it follows  that
$$
d(x,y)=|x|+|y|-2l+h, \quad d(\sigma(x),\sigma(y))=|\sigma(x)|+|\sigma(y)|-2l'+h',
$$ where $l, l'$ and $h, h'$ are levels and lengths of $\pi(x, y), \pi(\sigma(x), \sigma(y))$, respectively. Then we have $$|d(\sigma(x),\sigma(y))-d(x,y)|\le |h-h'|+2|l-l'|\le c+2|l-l'|$$
where $c$ is a hyperbolic constant as in Theorem \ref{th2.3}. If $T$ is a singleton, then $l'=l$; otherwise, the elements of $\sigma(T)$ share the same parent. Hence $l'=l-1$, and
$$
|d(\sigma(x),\sigma(y))-d(x,y)|\leq c+2.
$$
That completes the proof of the theorem.

\section{Lipschitz equivalence of self-similar sets}

Lipschitz equivalence of self-similar sets is an interesting problem, which has been undergoing considerable development recently. It is well-known that the Hausdorff dimension is invariant under a bi-Lipschitz map. However, it is impractical to find the Hausdorff dimension of a fractal by using the relation of Lipschitz equivalence to other ones. Usually, Hausdorff dimension is a prerequisite for Lipschitz equivalence in the studies. We know that, under the OSC, the Hausdorff dimension of a self-similar set can be calculated easily (\cite{Hu81}\cite{F}). In the absence of  the OSC, it is much harder to compute the Hausdorff dimension. Nevertheless, the following formula  is very useful for a large class of self-similar sets with overlaps.

\begin{Lem}[\cite{NgWa01}]\label{thm-finitetype}
Let $K$ be the self-similar set on ${\mathbb R}^d$ defined by (\ref{set equ}). If the finite type condition holds,  then the Hausdorff dimension of $K$ is given by $$\dim_H K=\frac{\ln \rho}{-\ln r}$$ where $\rho$ is the spectral radius of a matrix associated with the finite type condition, $r$ is the minimum contraction ratio.
\end{Lem}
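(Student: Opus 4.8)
Since this statement is quoted from \cite{NgWa01}, I outline the standard argument one would use to establish it. Recall that the finite type condition produces, at each scale $r^n$, a finite family $\mathcal{Q}_n$ of \emph{net cylinders} --- essentially the cylinders $S_x(K)$ with $x$ ranging over the level set $X_n$ introduced above, grouped according to their relative positions --- together with a finite list $\{\tau_1,\dots,\tau_q\}$ of \emph{neighborhood types}, so that every net cylinder is a scaled congruent copy of some $\tau_i$, and a type-$\tau_i$ net cylinder at level $n$ gives rise to a fixed number $M_{ij}$ of type-$\tau_j$ net cylinders at level $n+1$. Let $M=[M_{ij}]$ be this transition (incidence) matrix, $\rho$ its spectral radius, and put $s:=\ln\rho/(-\ln r)$, equivalently $\rho\, r^{s}=1$. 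The plan is to sandwich $\dim_H K$ between $s$ from above, by the obvious covering, and $s$ from below, by the mass distribution principle applied to an $M$-adapted measure.

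For the upper bound, the number $\#\mathcal{Q}_n$ of net cylinders at level $n$ equals a row of $M^n$ applied to $\mathbf 1$, so $\limsup_n \tfrac1n\log\#\mathcal{Q}_n=\log\rho$; since the net cylinders cover $K$ and each has diameter at most $r^n\operatorname{diam}(K)$, one gets a covering number bound $N_{r^n}(K)\lesssim \rho^{\,n}$ up to subexponential factors, hence $\overline{\dim}_B K\le \log\rho/(-\ln r)=s$ and a fortiori $\dim_H K\le s$.

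For the lower bound, one builds a probability measure $\mu$ on $K$ carried by the net-cylinder structure. Assuming first that $M$ is primitive, let $\mathbf v=(v_1,\dots,v_q)$ be a left Perron eigenvector, $\mathbf v M=\rho\,\mathbf v$, and assign to a type-$\tau_i$ net cylinder $C$ at level $n$ a mass $\mu(C)$ proportional to $v_i\,\rho^{-n}$, normalized at level $0$; the eigenvector identity makes the masses of the children of $C$ sum to $\mu(C)$, so $\mu$ extends consistently to a Borel measure on $K$ with $\mu(C)\asymp \rho^{-n}\asymp (r^n)^{s}\asymp(\operatorname{diam}C)^{s}$. A ball $B(x,\delta)$ with $\delta\asymp r^n$ meets only a uniformly bounded number of level-$n$ net cylinders --- this \emph{bounded overlap} is exactly what the finite type condition supplies --- whence $\mu(B(x,\delta))\lesssim \delta^{s}$, and the mass distribution principle gives $\mathcal{H}^{s}(K)>0$, i.e. $\dim_H K\ge s$. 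Combining the two bounds yields $\dim_H K=s=\ln\rho/(-\ln r)$.

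I expect the main obstacle to be the lower bound: one must verify carefully that the finite type condition really forces net cylinders at a fixed level to overlap with uniformly bounded multiplicity (so that small balls see boundedly many of them), and one must handle the case when $M$ is not primitive --- then $\rho$ is realized on a single irreducible ``essential'' block, the measure must be supported on the sub-collection of net cylinders feeding into that block, and the corresponding sub-eigenvector plays the role of $\mathbf v$; the upper bound is unaffected, since $\limsup_n \tfrac1n\log\|M^n\|=\log\rho$ holds regardless of primitivity.
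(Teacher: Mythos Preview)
The paper does not supply a proof of this lemma at all: it is quoted verbatim from Ngai--Wang \cite{NgWa01} and used as a black box (only to compute $\dim_H K_\lambda$ in Example~4.5). There is therefore nothing in the paper to compare your argument against.

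That said, your outline is the standard and correct route to the Ngai--Wang formula: covering by level-$n$ net cylinders gives the upper box-dimension bound $\le s$, and a Perron-eigenvector measure together with the bounded-overlap property furnished by the finite type condition gives $\mathcal H^s(K)>0$ via the mass distribution principle. Your identification of the two genuine issues --- verifying bounded multiplicity of net cylinders at a fixed scale, and reducing the non-primitive case to an essential irreducible block --- matches exactly what the original paper \cite{NgWa01} has to work through. If you were actually writing this up, the only point I would flag is that in the non-primitive case one should be slightly careful: the measure is supported on a proper subsystem, and one must check that this subsystem still has the finite type structure (it does, since neighborhood types are inherited), so that the bounded-overlap argument goes through unchanged on the support.
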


The following result was  recently proved in \cite{LaWa16}, which establishes a very important relationship between the hyperbolic boundary of an augmented tree and the self-similar set.

\begin{theorem}[\cite{LaWa16}]\label{th-holder-equ}
With the same notation as in Section 2, the augmented tree $(X, {\mathcal E})$ is hyperbolic, and the hyperbolic boundary $\partial X$ of $(X, {\mathcal E})$ is H\"older equivalent to the self-similar set $K$, i.e., there exists a natural  bijection $\Phi: \partial X \to K$ and a constant $C>0$ such that
\begin{equation}\label{eq-holder}
C^{-1}|\Phi(\xi)-\Phi(\eta)|\le \rho_a^\alpha(\xi, \eta)\le C|\Phi(\xi)-\Phi(\eta)|, \quad \forall \ \xi, \eta\in \partial X,
\end{equation}  with $\alpha=-(\log r)/a$.
\end{theorem}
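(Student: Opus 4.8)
The hyperbolicity of $(X,\E)$ I would take as already in hand: by Theorem \ref{th2.3} one only has to bound the length of horizontal geodesics, and this is exactly what is verified for the augmented tree of an arbitrary IFS. So the substance is the H\"older estimate. Since $\rho_a^{\alpha}(\xi,\eta)=\exp(-a\alpha|\xi\wedge\eta|)$ by \eqref{eq2.1}, the choice $\alpha=-(\log r)/a$ turns the left-hand quantity in \eqref{eq-holder} into $r^{|\xi\wedge\eta|}$, so the plan is to construct a natural bijection $\Phi\colon\partial X\to K$ and to prove
$$
C^{-1}\,|\Phi(\xi)-\Phi(\eta)|\ \le\ r^{|\xi\wedge\eta|}\ \le\ C\,|\Phi(\xi)-\Phi(\eta)|
$$
with $C$ depending only on the IFS and $\kappa$.

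For the bijection I would go through $K$: to $z\in K$ attach a coding $z\in\bigcap_n K_{i_1\cdots i_n}$, take for each $n$ the prefix $x_n\in X_n$ of that coding, note that the resulting vertical ray $(x_n)$ (with $x_{n+1}$ a child of $x_n$) is a geodesic ray in $(X,\E)$ because every geodesic issuing from $\vartheta$ is vertical, and let $\Psi(z)\in\partial X$ be its endpoint. That $\Psi$ is well defined and bijective is where the design of $\E_h$ pays off: whenever cylinders $K_x,K_{x'}$ with $|x|=|x'|=n$ satisfy $\dist(K_x,K_{x'})\le\kappa r^n$ they are joined by an edge, so two vertical rays whose level-$n$ cylinders stay within $\kappa r^n$ have $d(x_n,x_n')\le1$ and represent the same boundary point, while rays whose cylinders eventually separate by a fixed positive Euclidean distance are non-equivalent; one also checks that every $\xi\in\partial X$ equals $\Psi(z)$ for some $z$ by comparing a geodesic ray to $\xi$ with a vertical ray through its limit cylinder, coarsening the scale by a bounded number of levels whenever a stray constant would otherwise spoil the comparison. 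Setting $\Phi:=\Psi^{-1}$, the feature to record is that $\Phi(\xi)=\bigcap_n K_{x_n}$ for any vertical ray $(x_n)$ representing $\xi$ — in particular $\Phi(\xi)\in K_{x_n}$ for every $n$ — and that $\mathrm{diam}\,K_{x_n}\le r^n\,\mathrm{diam}\,K\to0$.

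The two-sided estimate then rests on the canonical-geodesic identity \eqref{eq.cano.geo.identity}. Fix $\xi\ne\eta$ with vertical rays $(x_n),(y_n)$, and for large $m$ let $l,h$ ($h\le c$) be the level and length of the horizontal part of the canonical geodesic $\pi(x_m,y_m)$, so $|x_m\wedge y_m|=l-h/2$, which differs from $|\xi\wedge\eta|$ by at most a constant depending on $\delta$ (absorbed into $C$). For the upper bound, the horizontal part joins the level-$l$ ancestors $x_l,y_l$ through at most $c$ horizontal edges, hence $\dist(K_{x_l},K_{y_l})\le c(\kappa+\mathrm{diam}\,K)r^l$; since $\Phi(\xi)\in K_{x_l}$ and $\Phi(\eta)\in K_{y_l}$ this gives $|\Phi(\xi)-\Phi(\eta)|\le C_0 r^l\le C\,r^{|\xi\wedge\eta|}$. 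For the lower bound, put $d:=|\Phi(\xi)-\Phi(\eta)|$ (if $d>\kappa$ the bound is trivial, so assume $d\le\kappa$) and let $n$ be the largest level with $\kappa r^n\ge d$; from $\Phi(\xi)\in K_{x_n}$, $\Phi(\eta)\in K_{y_n}$ we get $\dist(K_{x_n},K_{y_n})\le d\le\kappa r^n$, so $(x_n,y_n)\in\E_h$ or $x_n=y_n$ and hence $d(x_n,y_n)\le1$; were the horizontal part of $\pi(x_m,y_m)$ at a level $l<n$, then $x_n,y_n$ would lie on $\pi(x_m,y_m)$ and the sub-geodesic between them would have length $2(n-l)+h\ge2$, contradicting $d(x_n,y_n)\le1$; thus $l\ge n$ and $|x_m\wedge y_m|=l-h/2\ge n-c/2$ for all large $m$, so $|\xi\wedge\eta|\ge n-c/2-2\delta$. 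As $\kappa r^{n+1}<d$, this yields $r^{|\xi\wedge\eta|}\le C\,d$, which finishes the estimate and, along the way, shows $\xi\ne\eta\Rightarrow\Phi(\xi)\ne\Phi(\eta)$, so $\Phi$ is a bi-H\"older homeomorphism and \eqref{eq-holder} holds with $\alpha=-(\log r)/a$.

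The step I expect to be the genuine obstacle is not any single computation but making $\Phi$ and all constants uniform \emph{without any separation hypothesis on the IFS}. Without the OSC — indeed without the WSC — the augmented tree has unbounded vertex degree and its horizontal components may contain arbitrarily many cylinders, so one cannot control the Euclidean diameter of a whole level-$n$ component by $Cr^n$, and a geodesic ray may drift horizontally at each level. Two devices get around this: representing each $\xi$ by a \emph{vertical} ray (not an arbitrary geodesic ray), which pins $\Phi(\xi)$ inside a single cylinder $K_{x_n}$ at every level irrespective of the surrounding component; and using that the threshold $\kappa r^{|x|}$ in the definition of $\E_h$ forces Euclidean-close cylinders to be graph-adjacent, coarsening the scale by a bounded number of levels whenever needed. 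Setting up these two reductions cleanly and feeding them into the canonical-geodesic formula is the delicate part, and it is also precisely what lets the statement cover every IFS, as in \cite{LaWa16}.
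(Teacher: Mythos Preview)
The paper does not prove this theorem: it is quoted from \cite{LaWa16} and stated without proof (the sentence just before it reads ``The following result was recently proved in \cite{LaWa16}''). So there is no in-paper argument to compare your proposal against.

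That said, your sketch is a reasonable reconstruction of how the result is obtained in \cite{LaWa16}: represent each boundary point by a vertical ray, set $\Phi(\xi)=\bigcap_n K_{x_n}$, and read off the two-sided estimate from the canonical-geodesic identity \eqref{eq.cano.geo.identity} together with the $\kappa r^{|x|}$ threshold in the definition of $\E_h$. Your identification of the real difficulty --- doing this uniformly without any separation hypothesis, when degrees and horizontal components are unbounded --- is exactly the point of \cite{LaWa16}, and your two devices (vertical-ray representatives; coarsening the level by a bounded amount to absorb stray constants) are the right ones. The only places where your outline is genuinely hand-wavy are the surjectivity/well-definedness of $\Psi$ (showing every $\xi\in\partial X$ is represented by some vertical ray, and that two codings of the same $z\in K$ give equivalent rays) and the passage from $|x_m\wedge y_m|$ to $|\xi\wedge\eta|$ with a constant depending only on $\delta$; both are routine but need a line or two more care than you give them.
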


By using the above theorem, we can obtain the Lipschitz equivalence of self-similar sets.

\begin{theorem} \label{th-lip-sets}
Let $K$ and $K'$ be self-similar sets that are generated by two IFS's as in (\ref{set equ}) with the same ratios $\{r_i\}_{i=1}^N$. If the corresponding augmented trees $X, Y$ are both simple and their incidence matrices are $(B, \bu)$-rearrangeable and $(B, \bu')$-rearrangeable respectively, where $B$ is the common incidence matrix of their tree structures.  Then $K$ and $K'$ are Lipschitz equivalent, and are also Lipschitz equivalent to a dust-like self-similar set with the same ratios $\{r_i\}_{i=1}^N$.

If we replace the above $X, Y$ by their quotient spaces $X^\sim, Y^\sim$,  then $K$ and $K'$ are still Lipschitz equivalent, and are Lipschitz equivalent to a Cantor-type set (may be not a dust-like self-similar set).
\end{theorem}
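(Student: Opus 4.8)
The strategy is to move the whole question onto the augmented trees, where Theorems~\ref{th-main} and~\ref{th-quotient} supply near-isometries onto (reduced) trees, to identify those tree boundaries via Proposition~\ref{th.simple tree}, and then to carry everything back to the self-similar sets through the H\"older equivalence of Theorem~\ref{th-holder-equ}. The device that turns a \emph{H\"older} equivalence of boundaries into a \emph{Lipschitz} equivalence of self-similar sets is the following observation, which I would isolate at the outset. The exponent $\alpha=-(\log r)/a$ in \eqref{eq-holder} depends only on the common minimal ratio $r=\min_i r_i$ and the visual parameter $a$; hence if $\Phi_1\colon(Z_1,\rho_a)\to(W_1,|\cdot|)$ and $\Phi_2\colon(Z_2,\rho_a)\to(W_2,|\cdot|)$ are bijections with $|\Phi_k(\xi)-\Phi_k(\eta)|\asymp\rho_a(\xi,\eta)^{\alpha}$ for the \emph{same} $\alpha>0$, and $\psi\colon(Z_1,\rho_a)\to(Z_2,\rho_a)$ is bi-Lipschitz, then $\Phi_2\circ\psi\circ\Phi_1^{-1}\colon W_1\to W_2$ is bi-Lipschitz, since $|\Phi_2\psi\Phi_1^{-1}(x)-\Phi_2\psi\Phi_1^{-1}(y)|\asymp\rho_a(\psi\Phi_1^{-1}x,\psi\Phi_1^{-1}y)^{\alpha}\asymp\rho_a(\Phi_1^{-1}x,\Phi_1^{-1}y)^{\alpha}\asymp|x-y|$.

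Fix $a>0$ small enough to be an admissible visual parameter for all the (finitely many) graphs that will occur. Since $X$ is simple with $(B,\bu)$-rearrangeable incidence matrix, Theorem~\ref{th-main} gives a near-isometry $(X,\E)\to(X,\E_v)$, hence by the near-isometry lemma of \cite{LaLu13} a bi-Lipschitz homeomorphism $\partial(X,\E)\simeq\partial(X,\E_v)$ for $\rho_a$; similarly $\partial(Y,\E')\simeq\partial(Y,\E'_v)$. The trees $(X,\E_v)$ and $(Y,\E'_v)$ are simple with the \emph{same} incidence matrix $B$, so Proposition~\ref{th.simple tree} supplies a graph isomorphism between them, which (being a graph isometry) preserves Gromov products, hence is an isometry for $\rho_a$. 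Composing the three maps gives a bi-Lipschitz $\psi\colon\partial(X,\E)\to\partial(Y,\E')$. By Theorem~\ref{th-holder-equ} there are bijections $\Phi\colon\partial(X,\E)\to K$ and $\Phi'\colon\partial(Y,\E')\to K'$ with $|\Phi(\xi)-\Phi(\eta)|\asymp\rho_a(\xi,\eta)^{\alpha}$ and $|\Phi'(\xi)-\Phi'(\eta)|\asymp\rho_a(\xi,\eta)^{\alpha}$ for the \emph{same} $\alpha=-(\log r)/a$ (the ratios of $K$ and $K'$ coincide); the observation gives $K\simeq K'$. For the dust-like model, choose an IFS $\{\hat S_i\}_{i=1}^N$ on $\mathbb R^d$ with ratios $\{r_i\}$ whose pieces are so widely separated that $\dist(\hat S_i(\hat K),\hat S_j(\hat K))\ge\kappa$ for $i\ne j$ (rescale the IFS if needed). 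For distinct $x,y$ in one level, if $w$ is their longest common prefix then $r_w>r^{|x|}$ and $\dist(\hat S_x(\hat K),\hat S_y(\hat K))\ge r_w\kappa>\kappa r^{|x|}$, so $\E_h=\emptyset$ and the augmented tree of $\hat K$ equals the tree $(\hat X,\hat\E_v)$. Since the symbolic space and its concatenation structure depend only on the ratios, $(\hat X,\hat\E_v)=(X,\E_v)$, so $\hat K$ is bi-H\"older with exponent $\alpha$ to $\partial(X,\E_v)\simeq\partial(X,\E)$, which is bi-H\"older with exponent $\alpha$ to $K$; by the observation, $K\simeq\hat K$, a dust-like self-similar set with ratios $\{r_i\}$.

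For the quotient assertion, replace Theorem~\ref{th-main} by Theorem~\ref{th-quotient}: the near-isometries now run from $(X^\sim,\E)$ and $(Y^\sim,\E')$ onto the reduced trees $(X^\sim,\E^*_v)$ and $(Y^\sim,\E^*_v)$, which are simple with common incidence matrix $B$ and hence have isometric $\rho_a$-boundaries; moreover $\partial(X^\sim,\E)$ and $\partial(Y^\sim,\E')$ are bi-H\"older with exponent $\alpha$ to $K$ and $K'$ by the version of Theorem~\ref{th-holder-equ} for quotient spaces established in \cite{LaWa16}. The observation again gives $K\simeq K'$. To exhibit $K$ as Lipschitz equivalent to a Cantor-type set, put $\Gamma=\partial(X^\sim,\E^*_v)$: as the hyperbolic boundary of a \emph{tree}, $\Gamma$ is a Cantor-type set, and since a tree has $\delta=0$ the function $\rho_b$ is a genuine ultrametric on $\Gamma$ for every $b>0$, as is each of its powers. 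In particular $\rho_a(\xi,\eta)^{\alpha}=\rho_{a\alpha}(\xi,\eta)$ with $a\alpha=-\log r$; since $K$ is bi-H\"older with exponent $\alpha$ to $(\Gamma,\rho_a)$ (compose the quotient form of Theorem~\ref{th-holder-equ} with the near-isometry of Theorem~\ref{th-quotient}), it follows that $K$ is bi-Lipschitz to $(\Gamma,\rho_{-\log r})$. This space need not be a dust-like self-similar set, since the tree $(X^\sim,\E^*_v)$ need not be the symbolic tree of any IFS with ratios $\{r_i\}$.

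The step I expect to be the crux is precisely the H\"older-to-Lipschitz passage: it is the hypothesis ``the same ratios $\{r_i\}$'' that forces $\alpha=-(\log r)/a$ to be identical on both sides and lets the H\"older distortions cancel, whereas for genuinely different ratio tuples one would be left with only a bi-H\"older map. The remaining points are routine bookkeeping: choosing $a$ admissibly and uniformly, checking that the spread-out comparison IFS produces no horizontal edges, and --- in the quotient case --- verifying that Theorem~\ref{th-holder-equ} descends to $\partial(X^\sim,\E)$ with the same exponent and that reducing $(X^\sim,\E_v)$ to $(X^\sim,\E^*_v)$ preserves simplicity and the matrix $B$; all of these follow from the constructions of \cite{LaWa16} and Section~2.
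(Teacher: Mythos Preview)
Your proof is correct and follows essentially the same route as the paper: invoke Theorem~\ref{th-main} (resp.\ Theorem~\ref{th-quotient}) to get $\partial(X,\E)\simeq\partial(X,\E_v)$, match the two tree boundaries via Proposition~\ref{th.simple tree}, and then push the bi-Lipschitz map down to $K$ and $K'$ using Theorem~\ref{th-holder-equ} and the cancellation of the common H\"older exponent $\alpha=-(\log r)/a$. You are more explicit than the paper on two points---you actually construct the strongly separated comparison IFS and verify $\E_h=\emptyset$, and in the quotient case you exhibit the Cantor-type model concretely as $(\partial(X^\sim,\E^*_v),\rho_{-\log r})$---but these are elaborations of exactly what the paper sketches rather than a different argument.
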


\begin{proof}
It follows from Theorem \ref{th-main} and Proposition \ref{th.simple tree} that $$\partial (X, \E) \simeq \partial (X, \E_v)=\partial (Y, \E_v')\simeq \partial (Y, \E').$$

Let $\varphi: \partial X  \to \partial Y$ be a bi-Lipschitz map. By Theorem \ref{th-holder-equ}, there exist two bijections $\Phi_1:\partial X\to K$ and $\Phi_2:\partial Y\to K'$ satisfying \eqref{eq-holder} with constants $C_1,C_2$, respectively.  Now we define $\tau: K \to K'$ as
$$
\tau = \Phi_2\circ \varphi\circ \Phi_1^{-1}.
$$
Then
$$
\begin {aligned}
|\tau(x) -\tau (y)| & \leq
C_2\ \rho_a(\varphi\circ\Phi_1^{-1}(x),\varphi\circ\Phi_1^{-1}(y))^\alpha\\
&\leq C_2C_0^\alpha\ \rho_a(\Phi_1^{-1}(x),\Phi_1^{-1}(y))^\alpha\\
&\leq
C_2C_0^\alpha C_1\ |x-y|.
\end{aligned}
$$
Let $C' = C_2C_0^\alpha C_1$, then $ |\tau (x) - \tau (y)| \leq C'|x -y|$. Moreover, we have ${C'}^{-1} |x-y| \leq |\tau (x) - \tau (y)|$ by using another inequality of \eqref{eq-holder}. Therefore $\tau : K \to K'$ is a bi-Lipschitz map and $K\simeq K'$.

If we regard $(X, \E_v)$ as the augmented tree of an IFS that is strongly separated, then apply the above conclusion to obtain that $K$ or $K'$ is Lipschitz equivalent to the dust-like self-similar set.

The second part follows from Theorem \ref{th-quotient-tree} and Proposition \ref{th.simple tree} by the same argument. Since  $(X^\sim ,\E^*_v)$  is a tree, its hyperbolic boundary is a Cantor-type set. However, due to the identification of vertices, it seems hard to regard $(X^\sim ,\E^*_v)$   as an augmented tree of an IFS that is strongly separated.
\end{proof}

Finally we illustrate our main results by two concrete examples. The first one with the OSC was considered in \cite{XiRu07} (or \cite{RuWaXi12}\cite{XiXi13}) by using algebraic properties of contraction ratios,  we will give a different proof from our perspective; the second one without the OSC is completely new. The higher dimensional case can also be handled but need more complicated discussions, we omit it here.

\begin{Exa}
Let $S_1(x)=rx, S_2(x)=r^2x+1-2r^2, S_3(x)=r^2x+1-r^2$ and $S'_2(x)=r^2x+\frac{1+r-2r^2}{2}$ where $0<r<1$. Let $K$ (resp. $K'$) be the self-similar set generated by the IFS $\{S_1, S_2, S_3\}$ (resp. $\{S_1, S'_2, S_3\}$) (see Figure \ref{fig-dust-touch}). Then $K\simeq K'$.
\end{Exa}

\begin{proof}
Obviously both the two IFS's satisfy the OSC, and $K'$ is dust-like while $K$ is not as $S_2(K)\cap S_3(K)=\{1-r^2\}$ (see Figure \ref{fig-dust-touch}). Let $(X, \E)$ be the augmented tree of $K$  defined as in Section 2. Then $(X, \E_v)$  can be regarded as the augmented tree of $K'$. By a few calculations, it can be seen that there are only  five equivalence classes of the horizontal components in $(X, \E)$: $[T_1]=[\{\vartheta\}]$; $[T_2]=[\{2, 3\}]$; $[T_3]=[\{12, 13\}]$; $[T_4]=[\{22, 23, 311\}]$; $[T_5]=[\{122, 123, 131\}]$. Hence $(X,\E)$ is simple and incidence matrix is
$$A=\left[\begin{array}{rrrrr}
          1 &  1 & 1 & 0 & 0 \\
          1 &  1 & 2 & 1 & 0 \\
          1 &  0 & 1 & 0 & 1 \\
          1 &  1 & 3 & 2 & 0 \\
          1 &  1 & 1 & 0 & 2
    \end{array} \right].$$
For the tree $(X, \E_v)$, the equivalence classes of vertices are $[\bt_1]=[\{\vartheta\}]$, $[\bt_2]=[\{13\}]$ and the incidence matrix is
$$B=\left[\begin{array}{rr}
          3 &  2  \\
          1 & 2
    \end{array}\right].
$$
\begin{figure} [h]
    \centering
    \subfigure[]{
      \includegraphics[width=6.5cm]{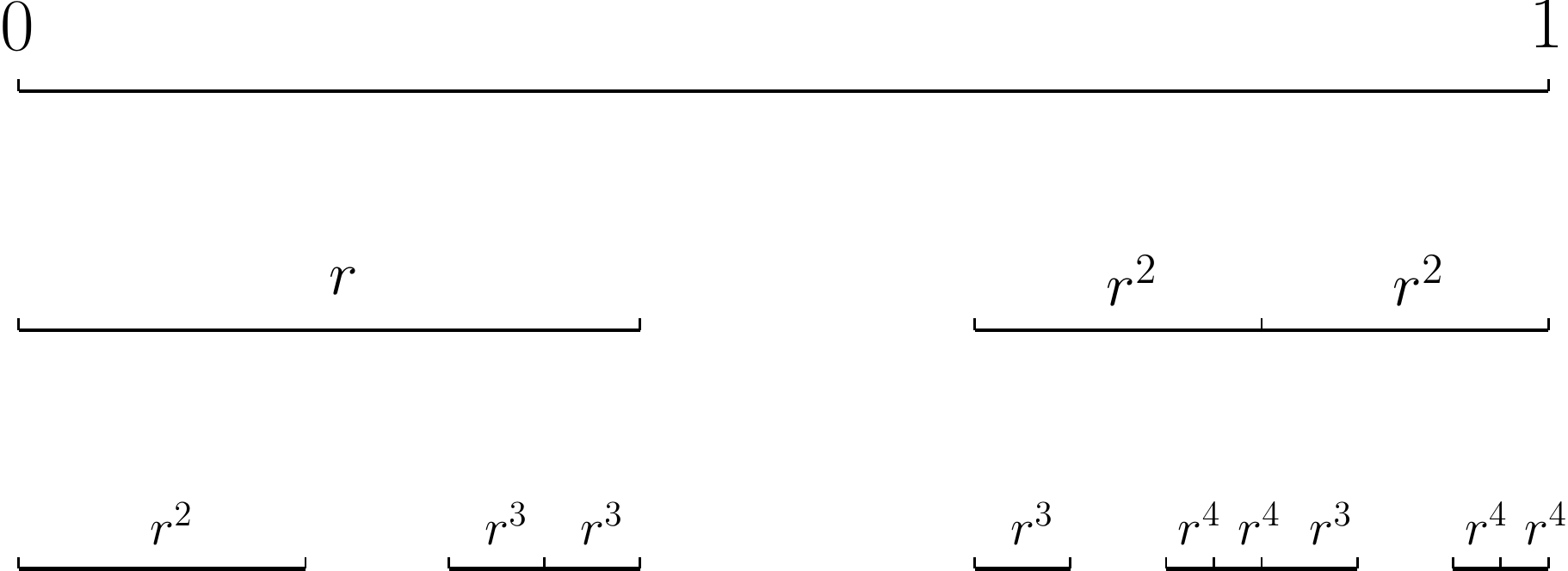}
     }\qquad
      \subfigure[]{
      \includegraphics[width=6.5cm]{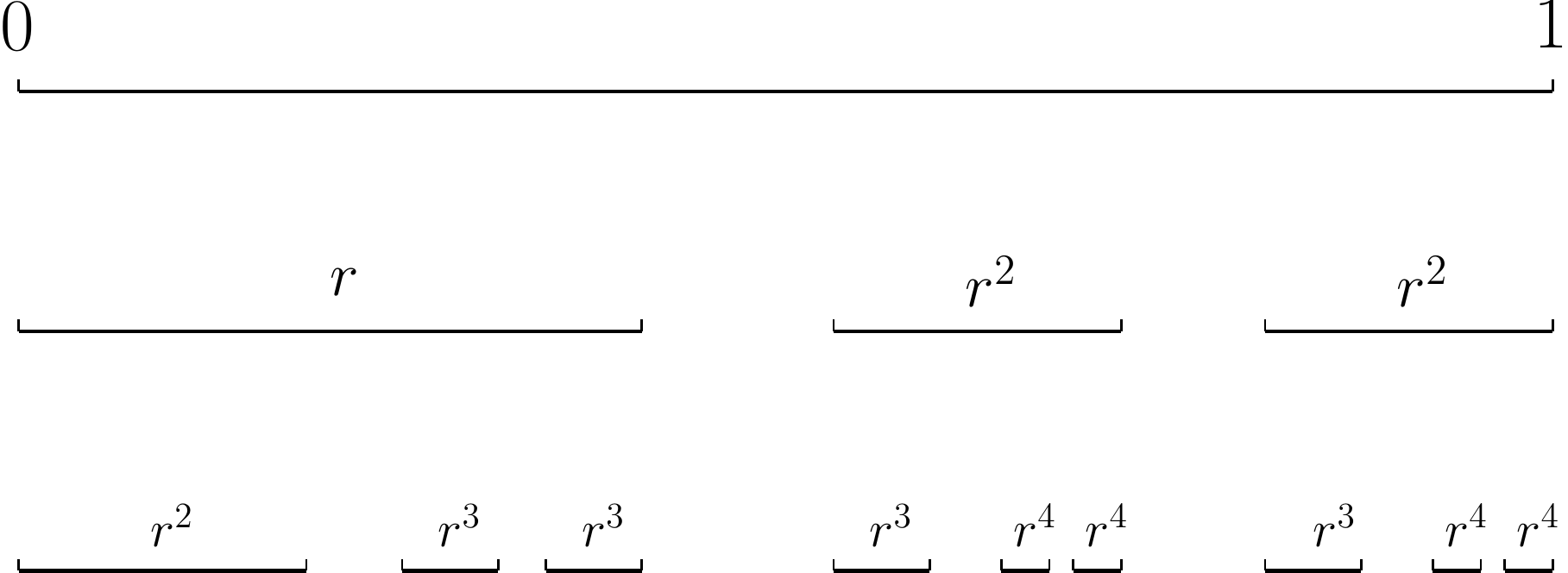}
     }
\caption{(a) the iterations of $K$; (b) the iterations of $K'$.}\label{fig-dust-touch}
\end{figure}

Let $\bu=[\bu_1, \dots, \bu_5]$ be the representation of $[T_1],\dots, [T_5]$ with respect to $[\bt_1], [\bt_2]$, where $\bu_1=[1,0], \bu_2=[2,0], \bu_3=[0,2], \bu_4=[3,0], \bu_5=[1,2]$. It follows from Example \ref{exa-addition} that the matrix $A$ is $(B, \bu)$-rearrangeable. Therefore $K\simeq K'$ by Theorem \ref{th-lip-sets}
\end{proof}

\begin{Exa}
Let $S_1(x)=\frac{1}{4}x, S_2(x)=\frac{1}{4}(x+\frac34), S_3(x)=\frac{1}{4}(x+\lambda), S_4(x)=\frac{1}{4}(x+3)$ be an IFS where $\lambda\in [0, 3]$, and let $K_\lambda$ be the self-similar set. Then $K_{\lambda}\simeq K_2$ if and only if $\lambda\in [\frac74, 2]$.
\end{Exa}

\begin{proof}
By the assumption, $K_\lambda\subset [0,1]$.   It is easy to see that $S_{14}=S_{21}$ (see Figure \ref{fig-exa}), hence the OSC does not hold for all $\lambda$. However, if $\lambda\in [\frac 74, 2]$, the IFS satisfies the finite type condition \cite{NgWa01} (hence satisfies the WSC \cite{DeLaNg13}). By Lemma \ref{thm-finitetype},  all such self-similar sets $K_\lambda$ have the common Hausdorff dimension
$$\dim_H{K_\lambda}=\frac{\log (2+\sqrt{3})}{\log 4}$$ where $2+\sqrt{3}$ is the spectral radius of the  following matrix associated with the finite type condition
$$\left[\begin{array}{rrr}
          2 &  1 & 1 \\
          1 & 2 & 1 \\
          0 & 2 & 1
    \end{array} \right].$$

\begin{figure} [h]
    \centering
    \subfigure[]{
      \includegraphics[width=6.5cm]{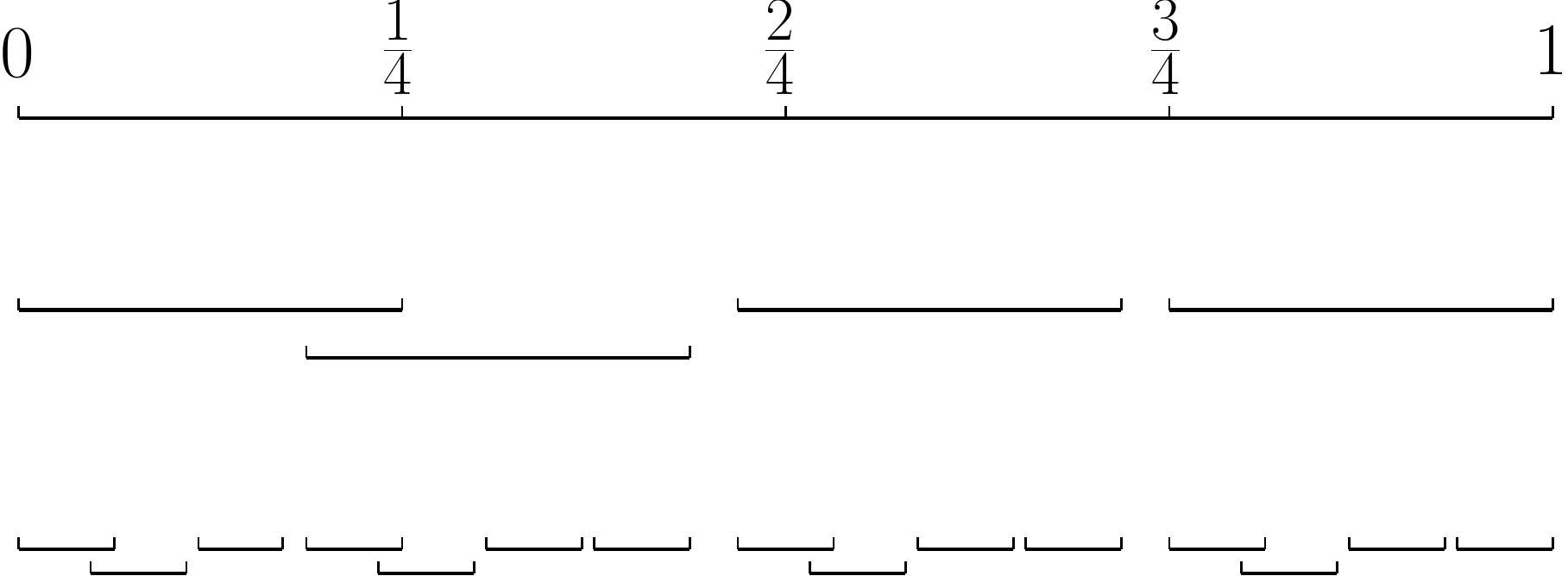}
     }\qquad
      \subfigure[]{
      \includegraphics[width=6.5cm]{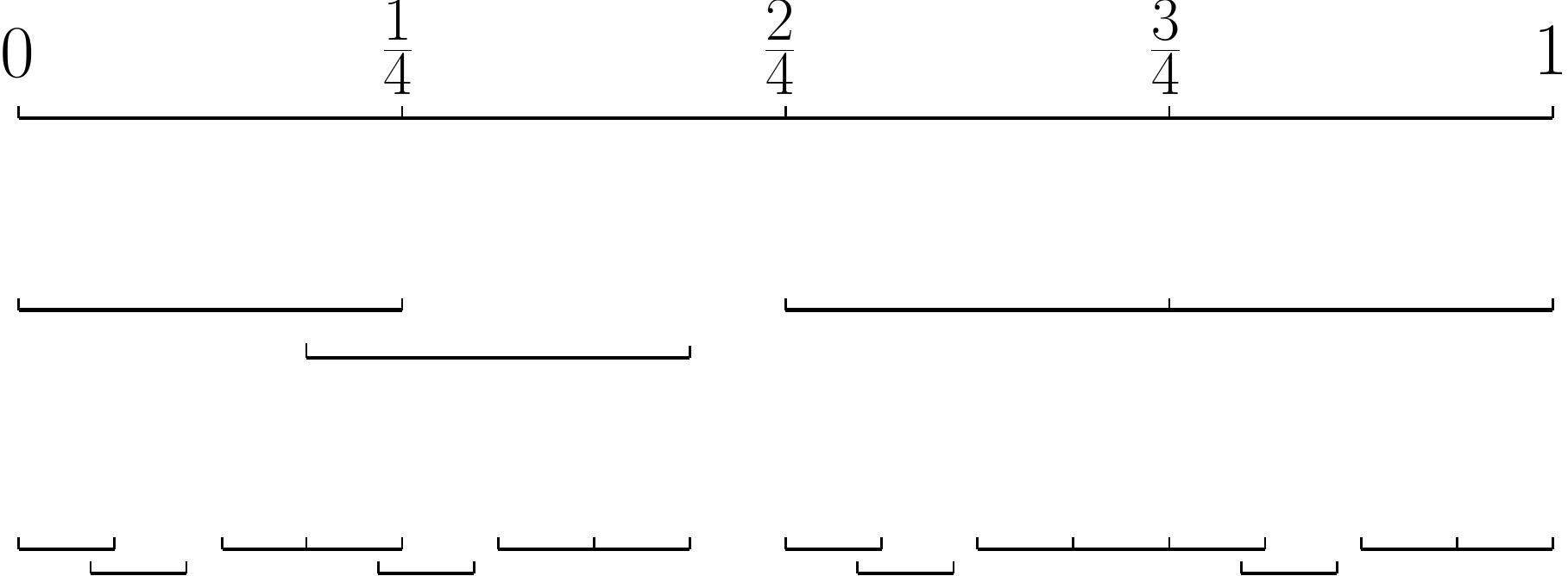}
     }
\caption{(a) the iterations of $K_\lambda$ where $\lambda\in (\frac74, 2)$; (b) the iterations of $K_2$.}\label{fig-exa}
\end{figure}

If $\lambda\in (\frac74, 2)$, then  all self-similar sets $K_\lambda$ are Lipschitz equivalent to each other as their augmented trees have the same quotient space (see Figure \ref{fig-exa} (a)). Hence it suffices to show $K_{\frac74}\simeq K_{\lambda_0}\simeq K_2$ where $\lambda_0=\frac{15}{8}$. By Example \ref{example-quotienttree}, the quotient space $(X^\sim, \E)$ of $K_{\frac74}$ is simple. Similarly  it can be seen that the quotient spaces of $K_{\lambda_0}$ and  $K_2$ are also simple. We write the incidence matrices of $K_{\frac{7}{4}}, K_{\lambda_0}, K_2$ as follows
\begin{align*}
A_1=\left[\begin{array}{rrr}
          1 &  1 & 0 \\
          1 & 2 & 1 \\
          1 & 2 & 2
    \end{array} \right]; \quad
 A_2=\left[\begin{array}{rr}
          2 & 1\\
          3 & 2
    \end{array}\right]; \quad
 A_3=\left[\begin{array}{rrrr}
          1 & 1 &  0 & 1  \\
          1 & 1 &  1 & 0 \\
          2 & 2 & 1 &  2  \\
          1 & 1 & 1 & 2
    \end{array}\right].
\end{align*}
Moreover, the corresponding reduced trees $(X^\sim, \E^*_v)$  share the same incidence matrix
$B=\left[\begin{array}{rr}
          3 &  1  \\
          2 & 1
    \end{array}\right]$, thus they are graph isomorphic by Proposition  \ref{th.simple tree}.

Let $\bu=[\bu_1,\bu_2,\bu_3]$ be the representation for the quotient space of $K_{\frac74}$ where $\bu_1=[1,0], \bu_2=[2,1],\bu_3=[3,1]$. Then $A_1$ is $(B, \bu)$-rearrangeable by Example \ref{exa1}. By checking the definition, similarly we have $A_2$ is $(B, \bu)$-rearrangeable where $\bu=[\bu_1,\bu_2]$ is a representation for the quotient space of $K_{\lambda_0}$ where $\bu_1=[1,0], \bu_2=[1,1]$; and $A_3$ is $(B, \bu)$-rearrangeable where $\bu=[\bu_1,\bu_2,\bu_3,\bu_4]$  is a representation for the quotient space of $K_2$ where $\bu_1=[2,0], \bu_2=[1,1], \bu_3=[2,1], \bu_4=[3,1]$. From Theorem \ref{th-quotient-tree}, it concludes   that $\partial (X^\sim,\E) \simeq \partial (X^\sim,\E^*_v)$. Hence the corresponding self-similar sets are Lipschitz equivalent by Theorem \ref{th-lip-sets}.

If $\lambda\in [0,\frac74)\cup (2, 3]$, then the overlaps of the IFS become more complicated but it is not hard to see that $\dim_H{K_\lambda}< \frac{\log (2+\sqrt{3})}{\log 4}$ \cite{NgWa01}. Therefore, the $K_{\lambda}$ can not be Lipschitz equivalent to $K_2$.
\end{proof}

\section{Topology of augmented trees}
In this section, we provide a criterion for the hyperbolic boundary $\partial X$ to be totally disconnected, which answers a question posed in \cite{LaLu13}.

\begin{Lem} \label{lem-tree1}
Let $(X,{\mathcal E})$ be an augmented tree defined in Section 2, and let $\pi(x,y)$ be a geodesic with $x,y\in X_n$ and be of the trapezoidal form (not necessarily canonical).

(i) if $\pi(x,y)\nsubseteq X_n$, then $\pi(x^{-1}, y^{-1})$ is a geodesic;

(ii) if $\pi(x,y)\subseteq X_n$, then $x^{-1}, y^{-1}$ can be joined by a path from $X_{n-1}$ of length less than $c$ which is the hyperbolic constant as in Theorem \ref{th2.3}.
\end{Lem}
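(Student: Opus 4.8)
The plan is to analyze the trapezoidal geodesic $\pi(x,y)$ according to whether it dips below level $n$ or stays inside level $n$, and in each case to ``project down'' by one level using the structure of the vertical and horizontal edges recorded in Section 2. Recall the key structural fact about horizontal edges: if $(u,v)\in\E_h$ then $|u|=|v|$ and either $u^{-1}=v^{-1}$ or $(u^{-1},v^{-1})\in\E_h$; and of course if $(u,v)\in\E_v$ with $|v|=|u|+1$ then $u=v^{-1}$. Thus taking the parent map $z\mapsto z^{-1}$ sends edges to edges or collapses them, which is exactly what makes a projection argument work.

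\textbf{Case (i): $\pi(x,y)\nsubseteq X_n$.} Since $\pi(x,y)$ is trapezoidal with endpoints $x,y\in X_n$ and it is not contained in $X_n$, its lowest level is some $l<n$, and the geodesic has the form $[x,x^{-1},\dots,x^{-(n-l)},w_1,\dots,w_k,y^{-(n-l)},\dots,y^{-1},y]$ where the middle part lies in level $l$. First I would observe that $x^{-1}$ and $y^{-1}$ are joined by the path obtained from $\pi(x,y)$ by deleting the two endpoints $x,y$; this path lies in levels $\le n-1$ and has length $d(x,y)-2$. Then I would argue it is in fact a geodesic: if there were a shorter path from $x^{-1}$ to $y^{-1}$, prepending the vertical edges $(x,x^{-1})$ and $(y^{-1},y)$ would yield a path from $x$ to $y$ of length less than $d(x,y)$, contradicting minimality. (One must note the two prepended edges genuinely shorten things because $x,y\in X_n$ while $x^{-1},y^{-1}\in X_{n-1}$, so no cancellation occurs.) This gives (i).

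\textbf{Case (ii): $\pi(x,y)\subseteq X_n$.} Here $\pi(x,y)=[x=z_0,z_1,\dots,z_h=y]$ is a horizontal path of length $h=d(x,y)$, hence a horizontal geodesic, so by Theorem \ref{th2.3} we have $h\le c$. Apply the parent map: consider the sequence $z_0^{-1},z_1^{-1},\dots,z_h^{-1}$ in $X_{n-1}$. By the structural property of $\E_h$ quoted above, for each $i$ either $z_i^{-1}=z_{i+1}^{-1}$ or $(z_i^{-1},z_{i+1}^{-1})\in\E_h$; in either case consecutive terms coincide or are adjacent. Deleting the repetitions yields a genuine path in $X_{n-1}$ from $x^{-1}$ to $y^{-1}$ of length at most $h\le c$, which is the assertion of (ii).

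\textbf{Main obstacle.} The routine part is the edge-projection bookkeeping; the one place that needs care is the geodesic claim in (i) --- specifically verifying that no shortcut between $x^{-1}$ and $y^{-1}$ can be ``completed back up'' to beat $d(x,y)$, which relies on $x,y$ and their parents lying on strictly different levels so that the added vertical steps are not wasted. I would also want to double-check that the trapezoidal hypothesis (rather than full canonicity) still guarantees the shape used in Case (i); since a trapezoidal geodesic by definition descends vertically, travels horizontally on one level, then ascends vertically, the decomposition above is available, and the highest-level requirement of a \emph{canonical} geodesic is never needed here. Finally, in Case (ii) the only subtlety is ensuring the collapsed sequence is nonempty and connected, i.e. that $x^{-1}$ and $y^{-1}$ can be equal (when all $z_i$ share a parent) — in which case the path has length $0<c$ and the statement holds trivially.
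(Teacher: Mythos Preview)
Your proposal is correct and follows the same approach as the paper. The paper's own proof declares (i) ``obvious'' and for (ii) observes that $\pi(x,y)$ is a horizontal geodesic of length $\le c$ and then passes to parents---exactly your projection argument; you simply supply the details (the contradiction argument in (i) and the collapse-repetitions step in (ii)) that the paper leaves implicit.
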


\begin{proof}
(i) is obvious; (ii) The assumption implies that $\pi(x,y)$ is a horizontal geodesic. Hence $d(x,y)\le c$ by Theorems \ref{th-holder-equ} and \ref{th2.3}. By taking the path formed by the parents of the vertices in $\pi(x,y)$, we see that $x^{-1}, y^{-1}$ are joined by a path in $X_{n-1}$ with length less than $c$.
\end{proof}

\begin{Lem}
Let $(X,{\mathcal E})$ be an augmented tree of bounded degree. Let $\{T_n\}_{n\ge 1}$ be a sequence of horizontal components such that $\lim_{n\to\infty}\#T_n=\infty$. Then there exist two sequences of points $\{a_n\}_{n\ge 1}, \{b_n\}_{n\ge 1}$ with $a_n,b_n\in T_n$ and $\lim_{n\to\infty}d(a_n,b_n)=\infty$.
\end{Lem}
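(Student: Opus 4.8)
The plan is to use the bounded degree hypothesis to show that large horizontal components cannot be "short" in the graph metric, by a counting argument: if every pair of points in a horizontal component $T_n$ were within a fixed distance $R$ of each other, then $T_n$ would be contained in a ball of radius $R$ about any of its points, and the number of vertices in such a ball is bounded by $\sum_{k=0}^R \Delta^k$, where $\Delta = \max\{\mathrm{deg}(x): x\in X\} < \infty$. This bound is independent of $n$, contradicting $\lim_{n\to\infty}\#T_n=\infty$. So the diameter (in $d$) of $T_n$ must tend to infinity, and then one picks $a_n, b_n\in T_n$ realizing (or approximately realizing) that diameter.

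More precisely, first I would fix $\Delta := \max\{\mathrm{deg}(x): x\in X\}$, which is finite by hypothesis. For each $R\ge 0$ and each $x\in X$, the ball $\{y\in X : d(x,y)\le R\}$ has at most $M(R) := 1 + \Delta + \Delta^2 + \cdots + \Delta^R$ elements, by the standard induction on $R$ (each vertex at distance $k$ has at most $\Delta$ neighbors, so the sphere of radius $k+1$ has size at most $\Delta$ times that of radius $k$). Next, for each $n$ choose any $a_n\in T_n$, and let $b_n\in T_n$ be a point maximizing $d(a_n, \cdot)$ over $T_n$ (which exists since $T_n$ is finite). Set $D_n := d(a_n,b_n) = \max_{y\in T_n} d(a_n,y)$. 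Then $T_n$ is contained in the ball of radius $D_n$ about $a_n$, so $\#T_n \le M(D_n)$. Since $\#T_n\to\infty$ and $M$ is a fixed (nondecreasing) function of its argument, we must have $D_n\to\infty$, i.e. $\lim_{n\to\infty} d(a_n,b_n)=\infty$. This gives exactly the two sequences claimed.

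I do not expect any serious obstacle here; the only point requiring minor care is the ball-size estimate, which is where the bounded degree hypothesis is used in an essential way — without it the conclusion is false (a "star" horizontal component of arbitrarily large size has diameter $2$). One could alternatively phrase the argument contrapositively: if $\sup_n d(a_n,b_n) < \infty$ for all choices of $a_n,b_n\in T_n$, then $\mathrm{diam}(T_n)$ is uniformly bounded, hence $\#T_n$ is uniformly bounded by $M(\sup_n \mathrm{diam}\,T_n)$, contradicting $\#T_n\to\infty$. Either formulation is routine, and I would present the direct version above since it produces the witnessing sequences explicitly. Note this lemma is precisely the combinatorial engine behind the "only if" direction of Theorem \ref{th-main3}: unbounded component sizes force pairs of boundary-convergent geodesic rays that stay within bounded distance, yielding connectedness in $\partial X$.
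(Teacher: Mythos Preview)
Your proof is correct and takes a genuinely simpler route than the paper's. The paper argues by contradiction: assuming $d(a,b)\le\eta$ for all $a,b\in T_n$ and all $n$, it uses the trapezoidal form of geodesics and the preceding Lemma~\ref{lem-tree1} to show that the $\ell$-th generation ancestor set $T_n^{(-\ell)}$ has every pair of vertices joined by a horizontal path of length at most the hyperbolic constant $c$, then bounds $\#T_n^{(-\ell)}$ from above by $d_1^c$ via the horizontal-degree bound and from below by $\#T_n/d_2^{\ell}$ via the vertical-degree bound, reaching a contradiction. Your ball-counting argument bypasses the level structure, the hyperbolicity constant, and Lemma~\ref{lem-tree1} entirely: it uses only that $(X,\E)$ has bounded degree, and would work verbatim for any sequence of finite subsets of growing cardinality in any bounded-degree graph. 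The paper's detour through the horizontal/vertical decomposition rehearses machinery reused elsewhere in Section~5, but for this lemma in isolation your argument is both shorter and more transparent. (One small inaccuracy in your closing commentary: the conclusion $d(a_n,b_n)\to\infty$ means the two sequences \emph{diverge}, so their limit points in $\partial X$ are distinct---this is precisely the Corollary that follows---rather than that rays ``stay within bounded distance''.)
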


\begin{proof}
Without loss of generality, we assume $T_n\subseteq X_n$. Suppose the statement is false, then there exists $\eta>0$ such that $$d(a,b)\le \eta\quad \text{for any}\quad a,b\in T_n, n\ge 1.$$
This implies all geodesics connecting $a,b\in T_n$ have length less than $\eta$. There exists $\ell>0$ (independent of $n$) such that all such geodesics are contained in $\cup_{j=n-\ell}^n X_j$ (they are trapezoidal, but not necessary canonical). Consider $T_n^{(-k)}$, the set of $k$-th generation ancestors of $T_n$. It is clear that $T_n^{(-k)}$ is connected by the definition of horizontal edge set $\E_h$. Note that if $a,b\in X_n$ and $\pi(a,b)\subseteq X_n$ (case (ii) in Lemma \ref{lem-tree1}). Then $a^{-1}, b^{-1}$ are joined by a path in $X_{n-1}$ of length less than $c$, and inductively, $a^{-\ell}, b^{-\ell}$ are joined by a path in $X_{n-\ell}$ of length less than $c$. If $a,b\in X_n$ but $\pi(a,b)\nsubseteq X_n$ (case (i) in Lemma \ref{lem-tree1}), then there exists $k\le \ell$ such that $\pi(a^{-k}, b^{-k})\subseteq X_{n-k}$. In this case we can derive as the above that $a^{-\ell}, b^{-\ell}$  again can be joined by a path in $X_{n-\ell}$ of length less than $c$.

From the above discussion, we see that for any two vertices in $T_n^{(-\ell)}(\subseteq X_{n-\ell})$ can be joined by a path in $X_{n-\ell}$ of length less than $c$. As $(X, {\mathcal E}_h)$ has bounded degree, say $d_1$. Then $$\#T_n^{(-\ell)}\le d_1^c.$$
On the other hand, $(X,{\mathcal E}_v)$ has bounded degree, say $d_2$. Then $$\#T_n^{(-\ell)}\ge \frac {\#T_n}{d_2^\ell}.$$
This leads to contradiction as $\lim_{n\to\infty}\#T_n=\infty$.
\end{proof}

\begin{Cor}
Under the same hypothesis of the above lemma, the set of limit points of $\{T_n\}_{n\ge 1}$ is not a singleton.
\end{Cor}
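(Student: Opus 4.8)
The plan is to derive the corollary from the preceding lemma together with a compactness argument on the hyperbolic boundary and a geometric argument in $\mathbb R^d$; the point is to produce two \emph{distinct} points of $\partial X$ both of which are limit points of $\{T_n\}$.

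First I would extract the points. By the preceding lemma pick $a_n,b_n\in T_n$ with $d(a_n,b_n)\to\infty$; as in that proof one may assume $T_n\subseteq X_{m_n}$ with $m_n\to\infty$, so $|a_n|=|b_n|=m_n$. A diagonal argument (bounded degree forces finitely many vertices per level) lets us pass to a subsequence along which $a_n\to\xi$ and $b_n\to\eta$; since $|a_n|=|b_n|=m_n\to\infty$ the limits lie on $\partial X$, and since $a_n,b_n\in T_n$ both $\xi$ and $\eta$ are limit points of $\{T_n\}$. Everything reduces to showing $\xi\ne\eta$.

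Next I would turn $\xi\ne\eta$ into a bound on Gromov products. On the completion one has $\xi=\eta$ iff $|\xi\wedge\eta|=\infty$, and $|\xi\wedge\eta|$ differs from $\liminf_n|a_n\wedge b_n|$ by at most a fixed multiple of the hyperbolicity constant; so it suffices to prove $\liminf_n|a_n\wedge b_n|<\infty$. With $|a_n|=|b_n|=m_n$, the canonical-geodesic identity \eqref{eq.cano.geo.identity} gives
\[
|a_n\wedge b_n|=m_n-\tfrac12\,d(a_n,b_n)=l_n-\tfrac{h_n}{2},
\]
where $l_n$ and $h_n$ are the level and the length of the horizontal part of the canonical geodesic $\pi(a_n,b_n)$. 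By Theorem \ref{th2.3} that part is a horizontal geodesic, so $h_n\le c$, and moreover $m_n-l_n=\tfrac12(d(a_n,b_n)-h_n)\to\infty$. Thus the goal becomes: inside the $T_n$, choose far-apart vertices whose canonical geodesics rise only to a bounded level $l_n$ along a subsequence -- equivalently, whose deepest common ancestor in the tree $(X,\E_v)$ does not run off to infinity.

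This last point is the crux and the step I expect to be the main obstacle. If it failed, intersecting ancestors would force all of $T_n$ to share a single deepest common ancestor $v_n$ with $|v_n|=:L_n\to\infty$, so that $T_n\subseteq\{v_n\}_{\mathcal D}$ and every cylinder of $T_n$ would sit inside $K_{v_n}$, a cylinder of diameter $\asymp r^{L_n}$. I would rule this out by transporting the picture into the self-similar copy rooted at $v_n$ and combining the bounded-degree hypothesis with the Euclidean geometry of $K$: because $T_n$ is a \emph{maximal} connected horizontal component whose deepest common ancestor is $v_n$, its $m_n$-level cylinders must span a fixed fraction of $K_{v_n}$ while being separated by more than $\kappa r^{m_n}$ from everything outside, and a packing estimate in $\mathbb R^d$ then controls how these cylinders can concentrate; feeding this through the H\"older identification $\Phi\colon\partial X\to K$ of Theorem \ref{th-holder-equ} -- and using that a self-similar set has no isolated points -- one contradicts the assumption that $\limsup_n T_n$ is the single point $\Phi^{-1}(p)$ with $p=\Phi(\xi)$. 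In other words, the hard part is upgrading ``$T_n$ contains some far-apart pair'' to ``$T_n$ genuinely spreads on the boundary rather than collapsing to one point'', and bounded degree is exactly what prevents the ancestors of a large horizontal component from contracting to a bounded set fast enough for $L_n$ to escape to infinity.
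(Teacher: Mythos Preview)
The paper gives no proof of this corollary at all; it is stated as if it follows immediately from the preceding lemma. So there is no ``paper's own proof'' to compare against, and what you have written is already far more than the paper offers. You are right that the passage from ``$d(a_n,b_n)\to\infty$'' to ``$\xi\neq\eta$'' is \emph{not} automatic: with $|a_n|=|b_n|=m_n$ one has $|a_n\wedge b_n|=m_n-\tfrac12 d(a_n,b_n)$, and nothing in the lemma prevents $m_n$ from growing much faster than $d(a_n,b_n)$, in which case $|a_n\wedge b_n|\to\infty$ and both sequences could converge to the same boundary point. Identifying this as the crux is the correct diagnosis, and your reduction to bounding $l_n$ along a subsequence is the right reformulation.

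Where your proposal breaks down is the final paragraph, which is not a proof but a hopeful outline with several unjustified steps. First, ``$l_n\to\infty$'' does \emph{not} force all of $T_n$ to share a single tree-ancestor $v_n$: in an augmented tree the horizontal part of the canonical geodesic $\pi(a_n,b_n)$ lies at level $l_n$ but connects the ancestors $a_n^{-(m_n-l_n)}$ and $b_n^{-(m_n-l_n)}$ by a \emph{horizontal} path of length $\le c$, not by equality; there is no reason for $T_n\subseteq\{v_n\}_{\mathcal D}$ for any single vertex $v_n$. Second, even granting a common ancestor, the assertion that the $m_n$-level cylinders of a maximal horizontal component ``must span a fixed fraction of $K_{v_n}$'' is not established and is not obviously true --- maximality only says $T_n$ is $\kappa r^{m_n}$-separated from the rest of level $m_n$, which gives no lower bound on its Euclidean spread inside $K_{v_n}$. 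Third, the packing estimate you allude to would at best give $\#T_n\lesssim (\mathrm{diam}\,K_{v_n}/r^{m_n})^d$, which is perfectly compatible with $\#T_n\to\infty$ while the Euclidean diameter shrinks. In short, the mechanism you sketch for ruling out $l_n\to\infty$ does not work as written, and you have not supplied the missing idea that ties the size of $T_n$ to its visual (rather than graph) diameter.
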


\begin{Lem}\label{lem-tree3}
Let $(X,{\mathcal E})$ be an augmented tree, let $p(u_0,u_1,\dots, u_k)\subset X_n$ ($n$-th level) be a non-self-intersecting path. If $d(u_0, d_k)\ge 3$, then there exists $1\le m_0\le k$ such that $\frac13 d(u_0, u_k)\le d(u_0, u_{m_0})\le \frac23 d(u_0,u_k)$.
\end{Lem}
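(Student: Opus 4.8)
The plan is to run a discrete intermediate value argument on the function $m\mapsto d(u_0,u_m)$. Write $D=d(u_0,u_k)$ and set $f(m)=d(u_0,u_m)$ for $0\le m\le k$, so that $f(0)=0$ and $f(k)=D\ge 3$. First I would observe that consecutive vertices $u_m,u_{m+1}$ of the path are joined by an edge of $\E$, hence $d(u_m,u_{m+1})=1$, and the triangle inequality gives $|f(m+1)-f(m)|\le 1$. Thus $f$ is an integer-valued function on $\{0,1,\dots,k\}$ whose consecutive values differ by at most $1$.

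Next I would invoke the elementary fact that such a function attains every integer value between $f(0)=0$ and $f(k)=D$: if some integer $j$ with $0\le j\le D$ were skipped, then $f$ would have to jump across $j$ in a single step, contradicting $|f(m+1)-f(m)|\le 1$. In particular, for every integer $j\in[0,D]$ there is an index $m$ with $f(m)=j$. Finally I would pick the right target value: since $D\ge 3$, the closed interval $[\frac13 D,\frac23 D]$ has length $\frac13 D\ge 1$ and therefore contains an integer $j$, and moreover $j\ge\frac13 D\ge 1>0$. Choosing $m_0$ with $f(m_0)=j$ then gives $\frac13 d(u_0,u_k)\le d(u_0,u_{m_0})\le\frac23 d(u_0,u_k)$, while $d(u_0,u_{m_0})=j\ge 1$ forces $m_0\ne 0$, so $1\le m_0\le k$, as required.

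There is essentially no serious obstacle in this lemma; it is a packaging of the discrete intermediate value theorem. The only points that need a moment's care are: (a) one uses only that consecutive vertices of the path are adjacent, so the hypotheses that $p$ is non-self-intersecting and lies inside the single level $X_n$ are not actually invoked here (they matter only for the later application); and (b) the step where the hypothesis $d(u_0,u_k)\ge 3$ is genuinely used, namely to guarantee that the interval $[\frac13 D,\frac23 D]$ is long enough to capture an integer — for $D\le 2$ this can fail, which is why the bound $3$ appears in the statement.
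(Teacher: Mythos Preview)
Your argument is correct and is exactly the approach taken in the paper: both proofs observe that $m\mapsto d(u_0,u_m)$ changes by at most $1$ at each step and then apply a discrete intermediate value argument. Your write-up is simply more explicit than the paper's, spelling out why $[\tfrac13 D,\tfrac23 D]$ must contain an integer when $D\ge 3$ and why the resulting $m_0$ satisfies $m_0\ge 1$.
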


\begin{proof}
Let $d_m=d(u_0, u_m), 1\le m\le k$, then either $0\le d_{m+1}-d_m\le 1$ or $0\le d_m-d_{m+1}\le 1$. That is, $d_m$, as a function of $m$, increases either by at most $1$ or decreases by at most $1$ at each step. Then the lemma follows.
\end{proof}

\begin{theorem}\label{th-bdy is total.dis}
Let $(X, {\mathcal E})$ be an augmented tree of bounded degree. Then the hyperbolic boundary $\partial X$ (or the self-similar set $K$) is totally disconnected if and only if the sizes of horizontal components are uniformly bounded.

The theorem remains valid if we replace $X$ by its quotient space $X^\sim$.
\end{theorem}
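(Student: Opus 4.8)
The plan is to establish the two implications of the theorem separately; since the natural bijection $\Phi:\partial X\to K$ of Theorem~\ref{th-holder-equ} is a homeomorphism (it is continuous with continuous inverse, and $\partial X$ is compact), total disconnectedness of $\partial X$ and of $K$ are equivalent, so it suffices to argue for $\partial X$. The cornerstone for \emph{both} directions is a single observation: for \emph{any} horizontal component $T\subseteq X_n$ of an augmented tree, the cylinder $[T]:=\partial(T_\D)\subseteq\partial X$ is clopen. Closedness is immediate. For openness, take $\xi'\in[T]$ and any $\eta'$ with $|\xi'\wedge\eta'|>n+c$, where $c$ is the hyperbolic constant of Theorem~\ref{th2.3}; by the canonical-geodesic identity \eqref{eq.cano.geo.identity} the horizontal part of $\pi(\xi',\eta')$ sits at some level $l>n$ and has length $h\le c$, and pulling its two endpoints back $l-n$ times to parents — using that $(x,y)\in\E_h$ forces $x^{-1}=y^{-1}$ or $(x^{-1},y^{-1})\in\E_h$ — shows that the level-$n$ vertices of the geodesic rays to $\xi'$ and to $\eta'$ lie in one and the same $X_n$-component, which must be $T$; hence $\eta'\in[T]$. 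Thus $[T]$ contains a $\rho_a$-ball about $\xi'$.

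For the implication ``uniformly bounded component sizes $\Rightarrow$ $\partial X$ totally disconnected'', suppose $\#T\le M$ for every horizontal component. Given $\xi\ne\eta$ in $\partial X$, let $x_n,y_n$ be the level-$n$ vertices on geodesic rays from $\vartheta$ to $\xi$ and $\eta$. If $x_n$ and $y_n$ lay in a common component for infinitely many $n$, then $|x_n\wedge y_n|=n-\tfrac12 d(x_n,y_n)\ge n-\tfrac{M-1}{2}$ for those $n$; since $n\mapsto|x_n\wedge y_n|$ is non-decreasing, this forces $|\xi\wedge\eta|=\infty$, i.e. $\xi=\eta$, a contradiction. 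Hence for all large $n$ we have $x_n\in T_x$ and $y_n\in T_y$ with $T_x\ne T_y$, and $[T_x]$ is a clopen set containing $\xi$ and disjoint from $[T_y]\ni\eta$. So $\partial X$ is totally disconnected.

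For the converse, assume the component sizes are not uniformly bounded. Since each $X_n$ is finite, we may pick horizontal components $T_n\subseteq X_{\ell_n}$ with $\ell_n\to\infty$ and $\#T_n\to\infty$. By the Lemma above we get $a_n,b_n\in T_n$ with $d(a_n,b_n)\to\infty$, and by its Corollary the set of limit points of $\{T_n\}$ in $\partial X$ is not a singleton, so after passing to a subsequence we may assume $a_n\to\alpha$ and $b_n\to\beta$ in $\partial X$ with $\alpha\ne\beta$. Fix a non-self-intersecting horizontal path $a_n=w^n_0,w^n_1,\dots,w^n_{k_n}=b_n$ inside $T_n$. By \eqref{eq.cano.geo.identity}, a ray through $w^n_i$ and a ray through $w^n_{i+1}$ have Gromov product at least $\ell_n-\tfrac12$, so boundary points below consecutive $w^n_i$ are within $C\exp(-a(\ell_n-1))$ of one another, while each $[w^n_i]$ has $\rho_a$-diameter at most $C\exp(-a\ell_n)$ (here $C$ absorbs the constant relating $\rho_a$ to an equivalent metric). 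Concatenating along the path, and appending the links $\alpha\to\xi_n$ and $\eta_n\to\beta$ for chosen $\xi_n\in[a_n]$, $\eta_n\in[b_n]$ (which converge to $\alpha$, $\beta$ respectively because $a_n\to\alpha$, $b_n\to\beta$ and these cylinders are tiny), we obtain a finite chain in $\partial X$ from $\alpha$ to $\beta$ of mesh $\varepsilon_n\to0$. Any clopen $U\subseteq\partial X$ has $\mathrm{dist}(U,\partial X\setminus U)>0$ and so cannot be crossed by a sufficiently fine chain; hence no clopen set separates $\alpha$ from $\beta$, so they lie in the same — necessarily nontrivial — connected component, and $\partial X$ is not totally disconnected. (Alternatively, Lemma~\ref{lem-tree3} lets one thin the $w^n_i$ to a dyadically parametrized sub-family and pass to the limit to produce an honest arc, but the chain argument already suffices.) For the quotient space $X^\sim$, which by \cite{LaWa16} is again a hyperbolic graph of the same type, the identical argument applies; the only point needing attention is that a vertex may have several parents, which is harmless since the cylinder argument invokes only the highest-level horizontal segment of canonical geodesics.

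I expect the converse to be the delicate direction. The difficulty is that a large horizontal component can still have a small ``hyperbolic footprint'': two of its vertices need not be close in the metric $d$, and their limits in $\partial X$ need not be distinct. So one cannot simply take the limits of $a_n$ and $b_n$ and hope they differ; the divergence Lemma, the non-degenerate-limit Corollary, and the canonical-geodesic identity \eqref{eq.cano.geo.identity} must be combined precisely so as to guarantee \emph{simultaneously} that the limiting endpoints $\alpha\ne\beta$ are genuinely distinct and that the chain joining them has vanishing mesh. Once the clopen nature of cylinders over horizontal components is isolated, the sufficiency-of-the-condition direction is comparatively routine.
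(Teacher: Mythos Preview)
Your proof is correct and takes a genuinely different route from the paper's in both directions. For ``bounded components $\Rightarrow$ totally disconnected'', the paper argues on $K$ by contrapositive: a nontrivial connected component $C\subset K$ forces an ever-longer chain of touching level-$n$ cells $K_{x_1},\dots,K_{x_\ell}$ covering $C$, so the horizontal component containing these $x_j$ has unbounded size. You instead isolate the clopen-cylinder observation and argue intrinsically on $\partial X$, which is cleaner and avoids passing through Theorem~\ref{th-holder-equ}. For the converse, the paper proceeds by contradiction: assuming $\partial X$ totally disconnected, it splits the limit set $T$ of $\{T_n\}$ into nonempty clopen pieces $A\sqcup B$ with disjoint neighbourhoods $U,V$ in $\overline{X}$, shows $d(U\cap T_n,\,V\cap T_n)\to\infty$, and then invokes Lemma~\ref{lem-tree3} to produce $z_n\in T_n\setminus(U\cup V)$ with $d(z_n,U_n),\,d(z_n,V_n)\ge\tfrac13 d(U_n,V_n)$; any accumulation point of $\{z_n\}$ lies in $T$ yet outside $U\cup V$, a contradiction. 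Your $\varepsilon$-chain construction bypasses Lemma~\ref{lem-tree3} altogether and directly exhibits $\alpha,\beta$ in a common quasi-component (hence connected component, by compactness of $\partial X$); this is more constructive and dovetails nicely with your clopen-cylinder lemma.

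One point deserves a word of care. When you write ``so after passing to a subsequence we may assume $a_n\to\alpha$ and $b_n\to\beta$ with $\alpha\ne\beta$'', note that $d(a_n,b_n)\to\infty$ by itself does \emph{not} force distinct limits, since $|a_n\wedge b_n|=\ell_n-\tfrac12 d(a_n,b_n)$ may still tend to $\infty$; and the Corollary only guarantees two distinct accumulation points of $\bigcup_n T_n$, which a priori might be realized along \emph{disjoint} index subsequences. The fix is immediate: having passed to a subsequence with $a_{n_k}\to\alpha$, re-apply the Corollary to $\{T_{n_k}\}$ (still $\#T_{n_k}\to\infty$) to obtain a second accumulation point $\gamma\ne\alpha$ along a further subsequence of these \emph{same} indices; then $a_{n_{k_j}}\to\alpha$ and the corresponding $c_{n_{k_j}}\to\gamma$ lie in the same $T_{n_{k_j}}$, and your chain argument goes through verbatim with $\gamma$ in place of $\beta$.
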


\begin{proof}
We prove the theorem by contrapositive.  The sufficiency is essentially the same as in \cite{LaLu13} by using topological arguments, we sketch the main idea here. If $\partial X$  is not totally disconnected, so is the self-similar set $K$ by Theorem \ref{th-holder-equ}. Then there is a connected component $C\subset K$ contains more that one point. Note that for any $n>0$, $K = \bigcup_{x\in{X_n}} K_x$ where $K_x:=S_x(K)$. Let $K_{x_1} \cap C \not = \emptyset$. If $C\setminus K_{x_1} \not = \emptyset$,  then it is a relatively open set in $C$, and as $C$ is connected,
$$
\partial_C (C \setminus K_{x_1}) \cap   \partial_C (K_{x_1}\cap C) \not = \emptyset.
$$
($\partial_C (E)$ means the relative boundary  of $E$ in $C$). Let $w$ be in the intersection,  there exists $x_2 \in {X_n}$ such that $ w \in K_{x_1} \cap  K_{x_2}$ and $K_{x_2}\cap (C \setminus K_{x_1}) \not = \emptyset$.

Inductively, if $\bigcup _{j=1}^kK_{x_j}$ does not cover $C$,  then we can repeat the same procedure to find $x_{k+1} \in {X_n}$ such that
$$
K_{x_{k+1}} \cap  \left( {\bigcup}_{j=1}^k K_{x_j}\right) \not =\emptyset \quad \hbox {and} \quad K_{x_{k+1}}\cap \left(C \setminus
{\bigcup}_{j=1}^kK_{x_j}\right) \not = \emptyset.
$$
Since $K = \bigcup_{x\in{X_n}} K_x$, this process must end at some step, say $\ell$, and in this case $C \subset {\bigcup}_{j=1}^\ell K_{x_j}$. Since the diameter $ r^{n+1}|K|< |K_{x_j}| \leq r^n |K| \to 0$ as $n \to \infty$, $\ell$ must tend to infinity, which contracts the uniform boundedness of the horizontal components on the levels $X_n$.

For the necessity, let $\{T_n\}_{n\ge 1}$ be a sequence of horizontal components satisfying $\lim_{n\to \infty}\#T_n=\infty$. Without loss of generality, we assume $T_n\subseteq X_n$.  Let $T$ be the set of limit points of $\{T_n\}_{n\ge 1}$. Then $T\subset \partial X$ and is a compact set. Since $\partial X$ is totally disconnected, there exists non-empty compact subsets $A, B\subset \partial X$ such that
$$A\cap B=\emptyset \quad \text{and}\quad A\cup B=T.$$
Let $0<\epsilon <\rho_a(A,B)$ such that
$$U=\{x\in X\cup\partial X: \rho_a(x, A)\le \frac \epsilon 3\}\quad\text{and}\quad V=\{y\in X\cup\partial X: \rho_a(y, B)\le \frac \epsilon 3\}$$
and $U\cap V=\emptyset$. Hence for $n$ large enough, $$U_n := U\cap T_n\ne\emptyset,\quad  V_n:= V\cap T_n\ne \emptyset.$$
Observe that $\lim_{n\to\infty} d(U_n, V_n)=\infty$ (here $d(U_n,V_n)=\inf\{d(u, v): u\in U_n, v\in V_n\}$). For otherwise, there exists $c_0>0$ and $\{n_k\}$ such that $$d(U_{n_k}, V_{n_k})\le c_0.$$ This implies that there exist $x_k\in U_{n_k}, y_k\in V_{n_k}$  such that $d(x_k, y_k)\le c_0$. Hence the limit points of $\{x_k\}$ and $\{y_k\}$  are identical in the hyperbolic boundary $\partial X$, a contradiction.

Let $D_n=T_n\setminus (U_n\cup V_n)$, by $\lim_{n\to \infty} d(U_n, V_n)=\infty$ and $T_n$ is connected and $\lim_{n\to\infty} \#T_n=\infty$, we conclude that $\lim_{n\to\infty}\#D_n=\infty$ (similar argument as the above). Then by Lemma \ref{lem-tree3}, there exists $z_n\in D_n$ such that
$$d(z_n, U_n)\ge \frac 13d(U_n, V_n), \quad d(z_n, V_n)\ge \frac 13d(U_n, V_n).$$  Let $z\in \partial X$ be a limit point of $\{z_n\}$, then $z\notin U\cup V (\supseteq A\cup B=T)$, a contradiction.  This completes the proof of the first part. The similar argument can also be used to show the second part without making many changes.
\end{proof}

\section{Concluding remarks and open questions}

In \cite{LaLu13}, we observed that if the incidence matrix $A$ of an augmented tree is primitive, then $A^k$ is rearrangeable for some $k>0$, by using this, we obtained a stronger but simpler result (Theorem 1.1 of \cite{LaLu13}). However, we do not know if the observation is still true for the generalized rearrangeable matrix of this paper. So we ask

\textbf{Q1.} Let $A, B, \bu$ be defined as in Definition \ref{def rearrangeable} and Remark \ref{B-u-remark}.  If $A, B$ are primitive matrices, does it imply that $A$ is $(B, \bu)$-rearrangeable?

Later in  \cite{DLL15}, for the equicontractive IFS with the OSC, by some modification, we removed the primitive (or rearrangeable) assumption on the incidence matrix.  So for the the IFS with non-equal contraction ratios and overlaps, we wonder

\textbf{Q2.} If we can remove  or weaken the rearrangeable assumption in our main results?

On the other hand,  a more general augmented tree was introduced by Lau and Wang \cite{LaWa16}, which is defined on a tree (independent of the IFS) with an associated set-valued map. They proved that such augmented tree is hyperbolic and  any compact set in ${\mathbb R}^d$ can be H\"older equivalent to the hyperbolic boundary of certain augmented tree. Hence more flexible iterated schemes can also be fit into the framework.  As for the application to Lipschitz equivalence, by using the method of this paper, it is still interesting to carry out a similar study on Moran sets or more general fractals.

\bigskip
\noindent {\bf Acknowledgements:} The author gratefully acknowledges the support of K. C. Wong Education Foundation and DAAD. He also would like to thank  Professor Ka-Sing Lau for his support and valuable discussions, especially on Section 5.

\bigskip


\begin{thebibliography}{99}

\bibliographystyle{ieee}
\addcontentsline{toc}{chapter}{Bibliography}

\bibitem{BH99} M. R. Bridson and A. Haefliger, {\it Metric spaces of non-positive curvature}, Grundlehren der Mathematischen Wissenschaften [Fundamental Principles of Mathematical Sciences] 319 (Springer, Berlin, 1999) xxii+643pp.

\bibitem{CoPi88} D. Cooper and T. Pignataro, {\it On the shape of Cantor sets}, J. Differential Geom. {\bf 28} (1988)  203-21.

\bibitem{DaSe97} G. David and S. Semmes, {\it Fractured Fractals and Broken Dreams: Self-Similar Geometry through Metric and Measure}, Oxford Univ. Press, 1997.

\bibitem{DeHe12} G.T. Deng and X.G. He,  {\it Lipschitz equivalence of fractal sets in ${\mathbb R}$}, Sci. China Math. (2012), 55 (10):2095-2107.

\bibitem{DLL15} G.T. Deng, K.S. Lau and J.J. Luo, {\it  Lipschitz equivalence of self-similar sets and hyperbolic boundaries II},  J. Fractal Geom. 2 (2015), no. 1, 53-79.

\bibitem{DeLaNg13} Q.R. Deng, K.S. Lau and S.M. Ngai, {\it Separation conditions for iterated function systems with overlaps}, Fractal geometry and dynamical systems in pure and applied mathematics. I. Fractals in pure mathematics, 1-20, Contemp. Math., 600, Amer. Math. Soc., Providence, RI, (2013).

\bibitem{F} K.J. Falconer, {\it Fractal geometry. Mathematical foundations and applications}, John Wiley and Sons, Ltd., Chichester, (1990).

\bibitem{FaMa92} K.J. Falconer and D.T. Marsh, {\it On the Lipschitz equivalence of Cantor sets}, Mathematika {\bf 39} (1992) 223-233.

\bibitem{FL09} D.J. Feng and K.S. Lau, {\it Multifractal formalism for self-similar measures with weak separation condition}, J. Math. Pures. Appl. {\bf 92} (2009), 407-428.

\bibitem{Gr87} M. Gromov, {\it Hyperbolic groups}, MSRI Publications 8, Springer Verlag (1987), 75-263.

\bibitem{Hu81} J.E. Hutchinson, {\it Fractals and self-similarity}, Indiana Univ. Math. J. {\bf 30} (1981), 713-747.


\bibitem{JuLaWa12} H.B. Ju, K.S. Lau and X.Y. Wang, {\it Post-critically finite fractal and Martin boundary}, Tran. Amer. Math. Soc.
{\bf 364}(2012), no.1, 103-118.

\bibitem{Ka03} V.A. Kaimanovich, {\it Random walks on Sierpinski graphs: Hyperbolicity and stochastic homogenization}, Fractals in Graz 2001, Trends Math., Birkhuser, Basel (2003), 145-183.

\bibitem{Ki10} J. Kigami, {\it Dirichlet forms and associated heat kernels on the Cantor set induced by random walks on trees}, Adv.
Math. {\bf 225} (2010), no.5, 2674-2730.


\bibitem{KoLa} S.L. Kong and K.S. Lau, {\it Critical exponents of induced Dirichlet forms on self-similar sets}, preprint.

\bibitem{KoLaWo} S.L. Kong, K.S. Lau, and L.T.K. Wong, {\it Random walks and induced Dirichlet forms on self-similar sets}, arXiv:1640.05440.


\bibitem{LaNg99} K.S. Lau and S.M. Ngai, {\it Multifractal measures and a weak separation condition}, Adv. Math. {\bf 141} (1999), 45-96.

\bibitem{LaWa09} K.S. Lau and X.Y. Wang, {\it Self-similar sets as hyperbolic boundaries}, Indiana U. Math. J. {\bf 58} (2009), 1777-1795.

\bibitem{LaWa15} K.S. Lau and X.Y. Wang, {\it Denker-Sato type Markov chains on self-similar sets}, Math. Zeit. 284 (2015), 401-420.

\bibitem{LaWa16} K.S. Lau and X.Y. Wang, {\it On hyperbolic graphs induced by iterated function systems}, Adv. Math, to appear.

\bibitem{LlMa10}  M. Llorente and P. Mattila, {\it Lipschitz equivalence of subsets of self-conformal sets}, Nonlinearity {\bf 23} (2010) 875-882.

\bibitem{L13} J.J. Luo, {\it Moran sets and hyperbolic boundaries}, Annales Academiae Scientiarum Fennicae Mathematica 38 (2013), 377-388.

\bibitem{LaLu13}J.J. Luo and K.S. Lau, {\it Lipschitz equivalence of self-similar sets and hyperbolic boundaries}, Adv. Math. {\bf 235} (2013), 555-579.

\bibitem{LuLi16} J.J. Luo and J.C. Liu, {\it On the classification of fractal squares}, Fractals, Vol. 24, No.1 (2016) 1650008 (11 pages).

\bibitem{NgWa01} S.M. Ngai and Y. Wang, {\it Hausdorff dimension of self-similar sets with overlaps}, J. London Math. Soc. (2)63(2001), no.3, 655-672.

\bibitem{MaSa09}  P. Mattila and P. Saaranen, {\it Ahlfors-David regular sets and bilipschitz maps}, Ann. Acad. Sci. Fenn. Math. {\bf 34} (2009) 487-502.

\bibitem{RaZh15} H. Rao and Y. Zhang, {\it Higher dimensional Frobenius problem and Lipschitz equivalence of Cantor sets}, J. Math. Pures Appl. 104 (2015) 868-881.

\bibitem{RaRuXi06} H. Rao, H.J. Ruan and L.-F. Xi,  {\it Lipschitz equivalence of self-similar sets}, CR Acad. Sci. Paris, Ser. I {\bf 342} (2006), 191-196.

\bibitem{RaRuWa10} H. Rao, H.J. Ruan and Y. Wang,  {\it Lipschitz equivalence of Cantor sets and algebraic properties of contraction ratios}, Trans. Amer. Math. Soc. {\bf 364} (2012), 1109-1126.

\bibitem{RuWa16} H.J. Ruan and Y. Wang, {\it Topological invariants and Lipschitz equivalence of fractal squares}, preprint, 2016.

\bibitem{RuWaXi12} H.J. Ruan, Y. Wang and L.-F. Xi, {\it Lipschitz equivalence of self-similar sets with touching structures}, Nonlinearity {\bf 27} (2014) 1299-1321.

\bibitem{Wa14} X.Y. Wang, {\it Graphs induced by iterated function systems}, Math. Z. (2014) 277:829-845.


\bibitem{Wo00} W. Woess, {\it Random Walks on Infinite Graphs and Groups}, Cambridge Tracts in Mathematics vol.138, Cambridge University Press, Cambridge
2000.

\bibitem{XiRu07} L.-F. Xi and H.J. Ruan, {\it Lipschitz equivalence of generalized $\{1,3,5\}-\{1,4,5\}$ self-similar sets}, \textit{Sci. China Ser. A}  {\bf 50} (2007) 1537-1551.

\bibitem{XiXi10} L.-F. Xi and Y. Xiong, {\it Self-similar sets with initial cubic patterns}, \textit{CR Acad. Sci. Paris, Ser. I}  {\bf 348} (2010) 15-20.

\bibitem{XiXi12} L.-F. Xi and Y. Xiong, {\it Lipschitz equivalence of fractals generated by nested cubes}, \textit{Math. Z.} {\bf 271}(3) (2012) 1287-1308.

\bibitem{XiXi13} L.-F. Xi and Y. Xiong,  {\it Lipschitz equivalence class, ideal class and the Gauss class number problem}, arXiv:1304.0103.


\end{thebibliography}
\end{document}